\numberwithin{equation}{section}
\newtheorem{theorem}{Theorem}[section]
\newtheorem{corollary}[theorem]{Corollary}
\newtheorem{lemma}[theorem]{Lemma}
\newtheorem{proposition}[theorem]{Proposition}
\theoremstyle{definition}
\theoremstyle{remark}
\newtheorem{remark}[theorem]{Remark}
\DeclareMathOperator{\ep}{\varepsilon}
\DeclareMathOperator{\Img}{Im}
\DeclareMathOperator{\R}{\mathbb{R}}
\DeclareMathOperator{\C}{\mathbb{C}}
\DeclareMathOperator{\T}{\mathbb{T}}
\DeclareMathOperator{\D}{\mathbb{D}}
\title[]{Global bifurcation for corotating and counter-rotating vortex pairs}
\author{Claudia Garc\'ia}
\address{Departamento de Matem\'aticas, Universidad Aut\'onoma de Madrid, Ciudad Universitaria de Cantoblanco, 28049, Madrid, Spain  \& Research Unit ``Modeling Nature'' (MNat), Universidad de Granada, 18071 Granada, Spain}
\email{claudia.garcial@uam.es}
\author{Susanna V.~Haziot}
\address{Department of Mathematics, Brown University, Box 1917, Providence, RI 02912, USA}
\email{susanna\_haziot@brown.edu}
\thanks{C.G has been supported by the European Research Council through Grant ERC-StG-852741 (CAPA), the MINECO--Feder (Spain) research grant number RTI2018--098850--B--I00 and the Junta de Andaluc\'ia (Spain) Project
FQM 954, S.V.H is partially supported by the National Science Foundation through the award DMS-2102961. This work was developed during the semester program {\it Hamiltonian Methods in Dispersive and Wave Evolution Equations} at ICERM}
\begin{document}

\date{\today}

\begin{abstract}
The existence of a local curve of corotating and counter-rotating vortex pairs was proven by Hmidi and Mateu in \cite{hm:pairs} via a desingularization of a pair of point vortices. In this paper, we construct a global continuation of these local curves. That is, we consider solutions which are more than a mere perturbation of a trivial solution. Indeed, while the local analysis relies on the study of the linear equation at the trivial solution, the global analysis requires on a deeper understanding of topological properties of the nonlinear problem. For our proof, we adapt the powerful analytic global bifurcation theorem due to Buffoni and Toland, to allow for the singularity at the bifurcation point. For both the corotating and the counter-rotating pairs, along the global curve of solutions either the angular fluid velocity vanishes or the two patches self-intersect.  
\end{abstract}

\maketitle

\setcounter{tocdepth}{1}

\tableofcontents

\section{Introduction}
We will consider rotating vortex patch solutions to the two-dimensional Euler equations in vorticity form. A vast amount of work has been done for the dynamical solutions to this nonlinear transport equation, such as local and global well-posedness of classical solutions in various function spaces. However, we are interested in \emph{steady} solutions, that is, solutions where the patches are either rotating at constant angular velocity, or translating at constant speed, without changing shape. These are also sometimes referred to as \emph{relative equilibria} or \emph{V-states}. There are a number of known, explicit solutions to this problem, such as the disk, an annulus, Kirchhoff ellipses, \cite{kirchhoff:book}. Perturbative existence results bifurcating from these then yield more complex, polygonal-like solutions.

We will focus on the situation when the explicit solutions are rotating or translating configurations of two point vortices. They can be perceived as limiting, singular cases of a very small vortex patch. Indeed, when perturbatively desingularized, one obtains pairs of symmetric patches with either same (\emph{corotating vortex pairs}) or opposite (\emph{counter-rotating vortex pairs}) circulations. Corotating patches rotate about the center of the system with constant angular velocity \cite{hm:pairs} and counter-rotating ones translate steadily at constant speed. Although this paper only focuses on the scenario of two point vortices, work has been done for an arbitrary number $N$ of points \cite{garcia:karman, garcia:choreography, Hassainia-Wheeler-points}.

Most of the work done on multiple and isolated V-state vortex patches is of perturbative nature. Indeed, the solutions obtained are very close to the explicit solutions they are bifurcating from, and are constructed by means of either the celebrated local bifurcation theorem of Crandall and Rabinowitz, or by the implicit function theorem. However, numerics suggest that interesting and beautiful solutions form as one moves further away from the trivial solutions, such as ones containing cusps or corners. Although there are to date no analytic results proving the formation of corners, in order to even get closer to understanding such solutions, the local curve of solutions needs to be extended to a global one. This has been done by Hassainia, Masmoudi, and Wheeler in \cite{hmw:global} for the case of the simple vortex patch bifurcating from the disk. By means of the global analytic bifurcation theorem due to Dancer \cite{dancer:global} and Buffoni and Toland \cite{bt:analytic}, they obtain a global curve which limits to a vanishing of the angular fluid velocity. To the best of our knowledge, this is the only global bifurcation result for V-state vortex patches.

The aim of this paper is to construct a global curve of solutions for the corotating and counter-rotating vortex pairs. As in \cite{hmw:global}, we will do this by using an analytic global bifurcation theorem. However, our case is more complicated than the one studied in \cite{hmw:global}. Indeed, the presence of more than one patch does not only add an additional layer of complexity to the problem but it also yields an extra limiting alternative along the curve: intersection of the boundary of the two patches. Moreover, since the explicit solutions to the problem are points rather than patches, the formulation of the problem contains a singularity which requires us to adapt the global bifurcation theorem accordingly. We first carry out the entire analysis for the corotating pairs and in the final section, show how the result can easily be adapted to that of the counter-rotating ones.      
\subsection{Presentation of the problem for corotating vortex pairs}
We begin by recalling the two-dimensional incompressible Euler equations expressed in the vorticity form 
\begin{equation}\label{euler}
\partial_t\omega+(u\cdot\nabla)\omega=0,\qquad u=\nabla^\perp\psi, \qquad\Delta\psi=\omega.
\end{equation}
Here $u$ denotes the velocity field, $\psi$ the stream function and $\omega$ the vorticity. For the remainder of the paper, we identify $(x,y)\in\mathbb{R}^2$ with $z=x+iy\in\mathbb{C}$ and we denote $z^\perp=(-y,x)$.

We now seek weak solution of \eqref{euler} which satisfy the initial data 
\begin{equation}\label{initial_vorticity}
\omega_0(z):=\omega(0,z)=\frac{1}{\ep^2\pi}(\chi_{D_1(0)}(z)+\chi_{D_2(0)}(z)),
\end{equation}
where the $D_m(t)$ are disjoint simply connected regions. Specifically, if the solution to \eqref{euler} takes the form 
\begin{equation}\label{ansatz}
\omega(t,z)=\omega_0(e^{-it\Omega}z),
\end{equation}
we get a rotating vortex pair $(D_1,D_2)$ about the origin $(0,0)$ with angular velocity $\Omega$. We choose the center of $D_1$ to be $(l,0)$ for some $l\in\mathbb{R}$ and we set
\begin{equation*}\label{symmetry}
D_2:=-D_1.
\end{equation*} 
We remark that $(l,0)$ is not a ``center of mass", in the sense that as we will see in Section~\ref{sect:formulation}, it is the center of the patch $D_1$ relative to the renormalization of the conformal maps.

Finally, we plug the ansatz \eqref{ansatz} in \eqref{euler} to get
\begin{equation}\label{euler_2}
\big(u_0(z)-\Omega z^\perp\big)\cdot n_{\partial D_m}=0,\qquad\text{for all }z\in\partial D_m,
\end{equation}  
for $m=1,2$ and where here, $n_{\partial D_m}$ denotes the unit normal vector to $\partial D_m$.

By moving to a frame of reference rotating at this same speed $\Omega$, the regions appear to be stationary. By expressing \eqref{euler} in terms of the relative stream function $\Psi=\psi_0-\tfrac{1}{2}\Omega|z|^2$ we then get
\begin{subequations}\label{elliptic formulation}
	\begin{alignat}{2}
	\Delta\Psi&=\frac{1}{\ep^2 \pi}\chi_{D_1}+\frac{1}{\ep^2 \pi}\chi_{D_2}-2\Omega,\\
	\nabla(\Psi+\tfrac{1}{2}\Omega|z|^2)&\rightarrow0,\quad\text{as }|z|\mapsto\infty\\
	\Psi&\in C^1(\mathbb{C}),\\\label{slov4}
	\Psi&=c_m,\qquad\text{on }\partial D_m,
	\end{alignat}
\end{subequations}
for some constants $c_m$, $m=1,2$. Since both the $D_m$ and the function $\Psi$ are unknowns, this is a free boundary problem.

\subsection{Main results}
The main result in this paper, is the following informally stated theorem. For a more rigorous statement, see Theorem \ref{thm:main_abstract}.
\begin{theorem}\label{thm:main}
	There exists a continuous curve $\mathscr C$ of corotating vortex patch solutions to \eqref{elliptic formulation}, parameterized by $s\in(0,\infty)$. Moreover, the following properties hold along $\mathscr C$:
	\begin{enumerate}[label=\rm(\roman*)]
		\item \textup{(Bifurcation from point vortex)} The solution at $s=0$ is a pair of points $z_1,z_2$ lying on the horizontal axis at a distance $l$ from each other, rotating with angular velocity $\Omega_0=1/(4\pi l^2)$.
		\item \textup{(Limiting configurations)}\label{thm:alternatives} As $s\to\infty$
		\begin{equation}\label{thm:physcial min}
			\min\bigg\{\min_{z\in\partial D_1}\ep \nabla\Psi(z)\cdot\bigg(\frac{z-l}{|z-l|}\bigg), \min_{z_m\in\partial D_m}|z_1-z_2| \bigg\}\to0
		\end{equation}
		\item \textup{($\ep$ bounded away from $0$)} The value of the parameter $\ep(s)$ is bounded away from $0$ for all $s$ away from the local curve.
		\item \textup{(Analyticity)} For each $s>0,$ the boundary $\partial D_m$ is analytic.
		\item \textup{(Graphical boundary)} For each $s>0$, the boundary of the patch can be expressed as a polar graph.
	\end{enumerate}
\end{theorem}
We briefly comment on the different alternatives in \ref{thm:alternatives}. The first term in \eqref{thm:physcial min} indicates that there are points $z(s)\in\partial D_1(s)$, (and by symmetry of the domains, hence also on $D_2$) for which the angular fluid velocity becomes arbitrarily small. We remark that the factor of $\ep$ is necessary in order to catch the domains in \eqref{elliptic formulation}, where the vorticity (for the purpose of the desingularization of point vortices) has been normalized to $1/(\pi\ep^2)$. Moreover, the slightly complicated formulation of angular fluid velocity comes from the fact that the patches are not centered at the origin. The formation of a corner or of a cusp would require that $\ep\nabla \Psi=0$ at a given point on the boundary of the patches and numerical evidence indicates that this does in fact happen. 

The second term in \eqref{thm:physcial min} vanishes if and only if the boundaries $\partial D_m$ of the two patches intersect at some point $z$. Clearly, this alternative can only occur in the situation of multiple patches. Numerical work \cite{overman:limiting} suggests that the limiting scenario consists of the two patches intersecting at a corner with a $90^\circ$ angle. This conjecture, also known as the \emph{Overman conjecture}, would imply that the two terms in \eqref{thm:physcial min} would occur simultaneously.

\subsection{Historical considerations}
The simplest explicit rotating vortex patch solution to \eqref{euler} is the disk, also referred to as the Rankine vortex. This solution was studied by Lord Kelvin \cite{thomson:columnar} at the linear level, who found that patches with boundary $r=1+\ep\cos(m\theta)$ rotate at constant angular velocity $\Omega_m=(m-1)/2m$, for $m=2,3,4\ldots$ Provided they have radial symmetry, more complicated explicit solutions can be found to the full nonlinear problem, such as for example a doubly-connected annulus. Note that the Rankine vortex can rotate at arbitrary angular velocity. This is in contrast to the ellipse-shaped solutions, found by Kirchhoff \cite{kirchhoff:book}, whose rotation depends on the parameters of the solution and specifically, on the eccentricity.

The initial results are numerical, and indicate that one can start from those trivial solutions and construct more complex ones from them. Notably, Deem and Zabusky \cite{dz:vstates} numerically showed the existence of polygonally shaped rotating patches with $m$-fold symmetry. Corotating and counter-rotating vortex pairs were investigated by Saffman and Szeto \cite{ss:pairs} and Pierrehumbert \cite{pierrehumbert}, who found that the patches were almost circular when far away from each other, but increasingly deform as they move closer to each other, until they eventually touch. Dritschel \cite{dritschel} numerically constructed pairs with different shapes and studied their linear stability. 

The first analytic existence results are perturbative, in the sense that they stem from an expansion around the explicit solutions, depending on some very small parameter $\ep$. The first rigorous existence proof for V-state solutions bifurcating from the Rankine vortex is due to Burbea \cite{burbea:motions}. He reformulated the elliptic problem in terms of conformal mappings and then obtained the local curve of solutions by means of the theorem of Crandall and Rabinowitz \cite{rabinowitz:simple}. This result was then improved first by Hmidi, Mateu and Verdera \cite{hmv:reg} who showed that the boundary of the solutions along the curve is smooth and convex, and then by Castro, Córdoba and Gómez-Serrano \cite{CCGS-2016-2} who showed that it is actually analytic. 

For the case when the explicit solutions are point vortices, the situation is more complicated. Marchioro and Pulvirenti \cite{mp:vortex} proved the desingularization of $N$-point vortices in the sense that they found smooth solutions $\omega_{\ep}$ to Euler's equations. Lamb \cite{lamb} discovered a nontrivial explicit example of touching counter-rotating pairs, where the vortex is not uniformly distributed but has a smooth compactly supported profile related to Bessel functions. Turkington \cite{turkington:nfold} analytically constructed pairs of corotating patches by restricting the attention to a fixed region around each patch and then solving a modified variational problem there. Although his approach is quite global, it does not yield sufficient structure of each vortex patch. Also of noteworthy mention are the results of Crowdy and Marshall \cite{cm:growing} and \cite{cm:analytical} in which the authors, by means of finding exact solutions, constructed a class of analytical solutions starting from the corotating point vortex pair which then "grew" into two symmetrical vortex patches at the two stagnation points in the corotating frame. Interestingly, as the area of these patches grew, they acquired "arms" of vorticity that not only eventually touch each other but form an exact circle of irrotational fluid enclosing the two point vortices.  

More recently, using Burbea's formulation and the implicit function theorem, Hmidi and Mateu \cite{hm:pairs} investigated the desingularization of point vortices by studying small vortex patches around each point. This was then extended by one of the authors to $N$-point vortices \cite{garcia:choreography}, or the K\'arm\'an Vortex Street \cite{garcia:karman}. Recently, Hassainia and Hmidi \cite{HH:asymmetric-pairs} studied the case of asymmetric pairs, and Hassainia and Wheeler \cite{Hassainia-Wheeler-points} proved the existence of more general multipole patch equilibria.

Despite the vast amount of work that has been done on perturbative results, very little has been done in the direction of existence proof for rotating patches that are outside a small neighborhood of the trivial solution. There is however strong numerical evidence for interesting solutions forming along the global curve. For instance, Wu, Overman and Zabusky \cite{woz:numerical} showed that the only plausible scenario for limiting V-states bifurcating from the Rankine vortex is the formation of corners with right angles. Likewise, based on numerical evidence, Overman \cite{overman:limiting} conjectured that the global curve for corotating and counter-rotating vortices must limit to the vortex patches intersecting at a right angle corner. 

The first and to the best of our knowledge, only, analytical global existence result for rotating patches is due to Hassainia, Masmoudi and Wheeler \cite{hmw:global} who analytically continued the local curve of solutions bifurcating from the Rankine vortex. By exploiting the analticity of the Cauchy integral operator and by reformulating the problem as a Riemann--Hilbert type problem, they used the global analytic bifurcation theorem initially due to Dancer \cite{dancer:global} and later improved by Buffoni and Toland \cite{bt:analytic}. This approach is inspired by the construction of large amplitude solutions to the steady two-dimensional water wave problem. Along this global curve, the solutions have an analytic boundary which can be expressed as a polar graph for an even, $2\pi/m$-periodic function and the limiting scenario consists of the vanishing of the angular fluid velocity.

Following the ideas in \cite{hmw:global}, we construct a global curve of solutions for corotating and counter-rotating vortex patches. However, our setting is more difficult than the one of the Rankine vortex. Indeed, our equations are overall more involved since we are dealing with more than just a single isolated patch. Moreover our formulation contains a singularity. Although this can be removed for the local theory, we must keep it in our formulation to be able to use ideas from Riemann--Hilbert theory. As a result, the traditional analytic bifurcation theorem is not applicable here as we cannot bifurcate directly from the trivial solution. Inspired by the theory of steady solitary water waves, we slightly modify the theorem to make it suitable to our needs. 

Finally we conclude this short presentation by pointing out that many of the above mentioned perturbation results have also been extended to other steady solutions for the Euler equations and also to other active scalar equations such as the surface quasi-geostrophic equations, see \cite{GHS:2020, GHM:2022, CCGS-2020, CCGS-2019, CCGS-2016, CCGS-2016-2, Dristchel-Hmidi-Renault, Hoz-Hassainia-Hmidi-Mateu:disc, Hmidi-Mateu:degenerate, Hmidi-Mateu:kirchhoff, Hoz-Hmidi-Mateu-Verdera:doubly-euler, Hoz-Hassainia-Hmidi:doubly-gSQG, Hassainia-Hmidi:vstates-gSQG, Hmidi-Mateu-Verdera:doubly, GSPSY-sheets, Roulley:2022, Hmidi-Roulley:2022, Hassainia-Roulley:2022, Hassainia-Hmidi-Masmoudi:2021, Berti-Hassainia-Masmoudi:2022}, and references therein.

\subsection{Outline of the paper}
We begin by carrying out the entire analysis for corotating vortex pairs. In Section~\ref{sect:formulation}, in the spirit of Burbea, we begin by reformulating our elliptic problem \eqref{elliptic formulation} in terms of conformal mappings. Using the Biot-Savart law and the Cauchy--Pompeiu formula, we then derive the contour integral equation for a pair of corotating vortex patches. Motivated by the approach in \cite{hmw:global}, we then express this equation as a scalar Riemann--Hilbert problem for $\phi'$, the derivative of the trace of the conformal map to the boundary of the disk. 

The main bulk of the paper lies in Section~\ref{sect:bounds} in which we provide quantitative bounds on the solution, depending only on lower bounds for the angular velocity field and on the fact that the two patches cannot self-intersect. These bounds are then of fundamental importance for the remainder of the paper. To begin with, they enable us to winnow out alternatives on the global curve in Section~\ref{sect:global}. Moreover, we also them to obtain results for the rigidity for corotating pairs. In the first place, we adapt in a straightforward way the rigidity theorem proved in \cite{GSPSY-rigidity} for a single patch, to our two-patch setting. That is, any corotating pair solution must have angular velocity $\Omega\in(0,\gamma/2)$, where $\gamma$ denotes the vortex strength. Otherwise the patches are radial and thus fail to be solutions to the problem. Since in our case the vorticity has been renormalized to include a factor of $1/\ep^2$, this bound is not sharp for small $\ep$. Thus, we complement this result with a new rigidity theorem, adapted to the specific structure of our solutions, which holds for all $\ep$ sufficiently small. Specifically, we show that under suitable circumstances, for sufficiently small values of $\ep$, the solutions must have a shape that is close to that of the disk, and that the perturbation $f$ as well as the angular velocity $\Omega$ must be bounded. The particularity of this result is that it holds along the entire global curve of solutions, and will then in turn be key in ruling out that $\ep$ can be small away from the local curve.

In Section~\ref{sect:global}, we construct our global curve of solutions for corotating pairs. After verifying that certain compactness criteria are satisfied, we extend the local curve (whose construction is recalled in Section~\ref{sect:local}) to a global one. Using a slightly modified version of the theorem of Buffoni and Toland, adapted to the fact that we have a singularity at the trivial solution, we obtain a global curve along which a blow-up type scenario occurs as the parameter $s\to\infty$. We then winnow down the various alternatives and prove that every solution along the global curve has an analytic boundary, thus obtaining a proof for Theorem~\ref{thm:main}. 

In Section~\ref{sect:translating}, we study counter-rotating pairs. We first briefly outline the formulation of the problem and then provide the main results. These are analogous to the ones for corotating patches and we briefly outline where the main differences occur. Finally, in an effort to keep the presentation as self-contained as possible, we provide an appendix which gathers the theorems needed for the construction of both the local and global curve, as well as some useful facts about Riemann--Hilbert problems and other cited results.

\subsection{Notation}
We recall the definitions of the Banach spaces which will be used throughout the paper. For any integer $k\geq0$ and for any $\alpha\in(0,1)$, we denote by $C^{k+\alpha}$ the space of functions whose partial derivatives up to order $k$ are Hölder continuous with exponent $\alpha$ over their domain of definition.\\

\begin{ackname}
The authors would like to thank M.H. Wheeler for proposing this interesting problem and for several discussions around it, and J. Park for conversations about the first rigidity theorem. The authors are also extremely grateful to the referees for their helpful comments and suggestions which have greatly contributed to improving the paper.
\end{ackname}

\section{Formulation}\label{sect:formulation}

\subsection{Point vortex model}
The $N$ point vortex system models the evolution of different point vortices located in the plane $z_{m,0}\in\R^2$ where $z_{m,0}\neq z_{i,0}$ and $m,i=1,...,N$ with $m\neq i$. That agrees with:
\begin{align*}
z_m'(t)=&\frac{1}{2\pi}\sum_{m\neq k=0}^{N}\Gamma_k\frac{(z_m(t)-z_k(t))^\perp}{|z_m(t)-z_k(t)|^2},\\
z_m(0)=&z_{m,0},
\end{align*}
here $\Gamma_m$ refers to the vortex strength.

We briefly provide the dynamics of two point vortices. Assuming that one of the points, $z_1$, initially lies on the horizontal axis, we have
\begin{equation}\label{points}
z_1(0)=l\quad\text{and}\quad z_2(0)=-z_1(0), 
\end{equation}  
for some $l\in\mathbb{R}$. Depending on the vortex strength $\Gamma_1$ and $\Gamma_2$ of each point vortex respectively, by \cite{newton:book}, the evolution of the two points consist of either a rotation of constant angular velocity or a translation at constant speed. That is, we have the following. 

\begin{proposition}
Consider two initial point vortices $z_1(0)$ and $z_2(0)$, with $z_1(0)\neq z_2(0)$, located in the real axis, with vortex strengths $\Gamma_1$ and $\Gamma_2$ respectively. Then:
\begin{itemize}
\item If $\Gamma_1+\Gamma_2\neq 0$ and $\Gamma_1  z_1(0)+\Gamma_2 z_2(0)=0$, then  $z_m(t)=e^{i\Omega_0 t}z_m(0)$, for $m=1,2$, with $\Omega_0=\frac{(\Gamma_1+\Gamma_2)}{2\pi |z_1(0)-z_2(0)|^2}$.
\item If $\Gamma_1+\Gamma_2=0$, then $z_m(t)=z_m(0)+V_0 t$, for $m=1,2$, with $V_0=\frac{i\Gamma_2}{2\pi} \textnormal{sign}(z_1(0)-z_2(0))/(|z_1(0)-z_2(0)).$
\end{itemize}
\end{proposition}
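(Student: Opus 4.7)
The plan is to derive both dynamics directly from the classical point vortex ODE system, which follows from the Biot--Savart law: the complex velocity induced at $z$ by a point vortex of strength $\Gamma_k$ at $z_k$ is $i\Gamma_k/(2\pi\overline{z-z_k})$. Restricting to the positions of the vortices themselves yields the Helmholtz--Kirchhoff system
\begin{equation*}
\dot z_1=\frac{i\Gamma_2}{2\pi}\frac{1}{\overline{z_1-z_2}},\qquad \dot z_2=-\frac{i\Gamma_1}{2\pi}\frac{1}{\overline{z_1-z_2}},
\end{equation*}
which is the starting point for both cases (alternatively, it can be cited directly from \cite{newton:book}). From here everything reduces to an elementary ODE computation.

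For the first case, I would begin by noticing two conservation laws. First, the linear impulse $I(t):=\Gamma_1 z_1(t)+\Gamma_2 z_2(t)$ is conserved, since the telescoping $\Gamma_1\dot z_1+\Gamma_2\dot z_2$ vanishes identically. Thus the hypothesis $\Gamma_1 z_1(0)+\Gamma_2 z_2(0)=0$ persists in time, so $z_2(t)=-(\Gamma_1/\Gamma_2)z_1(t)$ for all $t$, giving $z_1-z_2=z_1(\Gamma_1+\Gamma_2)/\Gamma_2$. Second, $|z_1-z_2|$ is conserved, because $(\dot z_1-\dot z_2)\overline{(z_1-z_2)}=i(\Gamma_1+\Gamma_2)/(2\pi)$ is purely imaginary. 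Substituting the impulse relation into the ODE for $z_1$ then gives
\begin{equation*}
\dot z_1=\frac{i\Gamma_2^2}{2\pi(\Gamma_1+\Gamma_2)}\cdot\frac{z_1}{|z_1|^2},
\end{equation*}
and rewriting $|z_1|^2$ in terms of $|z_1-z_2|^2$ via the impulse identity yields $\dot z_1=i\Omega_0 z_1$ with $\Omega_0=(\Gamma_1+\Gamma_2)/(2\pi|z_1(0)-z_2(0)|^2)$. Integrating gives $z_1(t)=e^{i\Omega_0 t}z_1(0)$, and the same rotation law for $z_2$ follows from the linear relation $z_2=-(\Gamma_1/\Gamma_2)z_1$.

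For the second case, when $\Gamma_1+\Gamma_2=0$, the key observation is that the Helmholtz--Kirchhoff equations immediately give $\dot z_1=\dot z_2$, so the relative displacement $z_1-z_2$ is constant, equal to its initial real value. Consequently each $\dot z_m$ equals the constant $V_0=(i\Gamma_2/2\pi)\cdot 1/\overline{z_1(0)-z_2(0)}$. Since $z_1(0)-z_2(0)\in\mathbb{R}\setminus\{0\}$, we have $1/\overline{z_1(0)-z_2(0)}=\mathrm{sign}(z_1(0)-z_2(0))/|z_1(0)-z_2(0)|$, which reproduces the stated formula. Integration then yields $z_m(t)=z_m(0)+V_0 t$.

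I do not expect a significant obstacle here: the result is classical and the proof is essentially a one-page ODE verification. The only points requiring minor care are correctly identifying the sign conventions in the Biot--Savart kernel and unpacking the $\mathrm{sign}$ factor coming from the complex conjugation of a real quantity; both are bookkeeping rather than substantive difficulties.
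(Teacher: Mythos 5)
Your proof is correct, and it is consistent with the paper's sign conventions: from $\Delta\psi=\omega$, $u=\nabla^\perp\psi$ one indeed gets the kernel $i\Gamma_k/(2\pi\overline{z-z_k})$, and your two conserved quantities (the impulse $\Gamma_1z_1+\Gamma_2z_2$ and the separation $|z_1-z_2|$) reduce both cases to a linear ODE; the resulting constants match the paper's specializations $\Omega_0=1/(4\pi l^2)$ and $V_0=-i/(4\pi l)$. The paper itself gives no proof of this proposition — it simply cites Newton's book — so there is nothing to compare against beyond noting that your argument is the standard Helmholtz--Kirchhoff computation and is complete as written.
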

The proof of the above proposition is standard and can be found, for instance, in \cite{newton:book}.
We now take $z_1(0)$ and $z_2(0)$ as in \eqref{points}, normalize the vortex strength $\Gamma_1=1$ and let $\Gamma_2=a$ vary. If $a=1$ we have corotating point vortices with constant angular velocity $\Omega_0$:
\begin{equation}\label{Omega0}
z_m(t)=e^{i\Omega_0 t}z_m(0), \qquad\text{where }\Omega_0=\frac{1}{4\pi l^2},
\end{equation}
for $m\in\{1,2\}$. On the other hand, if $a=-1$, we obtain counter-rotating vortices:
\begin{equation*}\label{V0}
z_m(t)=z_m(0)+V_0 t, \qquad\text{where }V_0=-\frac{i}{4\pi l}.
\end{equation*}

\subsection{Conformal mapping}
In order to fix the boundary of the domain, we use a conformal map $\Phi$ which maps the exterior of the unit disk $\mathbb{D}$ to the exterior of the centered patch $D:=(D_1-l)/\ep$. We normalize the map by requiring that $\Phi(\infty)=\infty$ and in order to avoid invariance under translations, set $\Phi'(\infty)=1$. The map is constructed such that the boundary $\mathbb{T}:=\partial\mathbb{D}$ is mapped to $\partial D$. We consider solutions to the problem \eqref{elliptic formulation} for which $D$ is of class $C^{k+\alpha}$ for some $k\geq0$. By the Kellogg--Warschawski theorem \cite[Theorem~3.6]{pommerenke:book}, we conclude that $\Phi$ is of class $C^{k+\alpha}(\mathbb{C}\setminus\mathbb{D})$ and its trace $\phi:=\Phi\big|_\mathbb{T}$ is a $C^{k+\alpha}$ parametrization of $\partial D$. 

We choose the conformal map such that 
\begin{equation}\label{conformal}
\partial D_1=\varepsilon\phi(\mathbb{T})+l,\qquad\text{and}\qquad \partial D_2=-\varepsilon\phi(\mathbb{T})-l,
\end{equation}
where $\phi$ is such that
\begin{equation}\label{slov5}
	\phi(w)=w+\ep f(w),\qquad f(w)=\sum_{n\geq 1}a_n w^{-n},\qquad\text{with }f\in C^{k+\alpha}(\mathbb{T}),
\end{equation}
for $a_n\in\mathbb{R}$ and $w\in\T$. By considering $a_n\in\R$ we have assumed that the domain is symmetric with respect to the $x$-axis. 


Since for most of the paper we will work with the centered, renormalized patch $D$, it will be useful to have a reformulation of the elliptic problem in terms of the associated relative stream function $\tilde{\Psi}$. We set $\tilde{\Psi}(z)=\Psi(l+\ep z)$ and get
\begin{subequations}\label{elliptic formulation-centered}
	\begin{alignat}{2}
	\Delta\tilde{\Psi}&=\frac{1}{\pi}\chi_{D(z)}+\frac{1}{\pi}\chi_{D(-z-2l/\ep)}-2\ep^2\Omega,\\\label{grad_Psi}
	\nabla\big(\tilde{\Psi}&+\tfrac{1}{2}\ep^2\Omega|z|^2+\Omega\ep l\cdot z\big)\rightarrow0,\quad\text{as }|z|\mapsto\infty\\
	\tilde{\Psi}&\in C^1(\mathbb{C}),\\\label{slov4-centered}
	\tilde{\Psi}&=c,\qquad\text{on }\partial D,
	\end{alignat}
\end{subequations}
for some constant $c$.

\subsection{Velocity formulation}
The velocity can be recovered from the vorticity according to the Biot-Savart law. Specifically
\begin{equation*}
\psi_0(z)=\frac{1}{2\pi^2\ep^2}\int_{D_1}\log|z-\zeta|\,d\zeta+\frac{1}{2\pi^2\ep^2}\int_{D_2}\log|z-\zeta|\,d\zeta.
\end{equation*}
From the definition of $u$ in \eqref{euler} we get
\begin{equation*}
u_0(z)=\nabla^\perp\psi_0(z)=\frac{i}{2\pi^2\ep^2}\int_{D_1}\frac{d\zeta}{\overline{z-\zeta}}\,d\zeta+\frac{i}{2\pi^2\ep^2}\int_{D_2}\frac{d\zeta}{\overline{z-\zeta}}.
\end{equation*}
By using the Cauchy--Pompeiu formula, we obtain
\begin{equation}\label{c-p}
u_0(z)=\frac{1}{4\pi^2\ep^2}\overline{\int_{\partial D_1}\frac{\overline{z}-\overline{\zeta}}{z-\zeta}}\,d\zeta+\frac{1}{4\pi^2\ep^2}\overline{\int_{\partial D_2}\frac{\overline{z}-\overline{\zeta}}{z-\zeta}}\,d\zeta.
\end{equation}
Moreover, from \eqref{initial_vorticity} and \eqref{conformal} we get the conformal parametrization of the velocity
\begin{equation*}
	u_0(\ep\phi(w)+l)=\frac{1}{4\pi^2\ep}\overline{\int_{\T}\frac{\overline{\phi(w)}-\overline{\phi(\zeta)}}{\phi(w)-\phi(\zeta)}\phi'(\zeta)\, d\zeta}-\frac{1}{4\pi^2\ep}\overline{\int_{\T}\frac{\ep\overline{\phi(w)}+\ep\overline{\phi(\zeta)}+2l}{\ep\phi(w)+\ep\phi(\zeta)+2l}\phi'(\zeta)\, d\zeta},
\end{equation*}
and finally, after some calculations, we see that \eqref{euler_2} can be written as
\begin{equation}\label{euler_im}
	\Img\bigg\{\bigg(\frac{1}{i}\overline{u\big(\ep\phi(w)+l\big)}+\Omega\big(\ep\overline{\phi(w)}+l\big)\bigg)w\phi'(w) \bigg\}=0.
\end{equation}
\subsection{Formulation with singularity in $\varepsilon$: Cauchy integral}
Following the ideas in \cite{hmw:global}, we express the nonlinear terms in \eqref{euler_im} in a nonlocal formulation. This will be useful for the construction of the global continuation of the curve of solutions. To this end, we introduce the Cauchy integral operator $\mathcal{C}(\phi)$
\begin{equation}\label{Ccauchy}
	\mathcal{C}(\phi)\colon g\mapsto\frac{1}{2\pi i}\int_{\T}\frac{g(\tau)-g(w)}{\phi(\tau)-\phi(w)}\phi'(\tau)\,d\tau,
\end{equation}
associated to the curve $\partial D=\phi(\T)$. In addition, we set
\begin{equation*}\label{Ctilde}
	\tilde{\mathcal{C}}_{\ep,l}(\phi)g=\frac{1}{2\pi i}\int_{\T}\frac{\ep g(\tau)+\ep g(w)+2l}{\ep\phi(\tau)+\ep\phi(w)+2l}\phi'(\tau)\,d\tau.	
\end{equation*}
We now rewrite \eqref{euler_im} as
\begin{equation}\label{cauchy_formulation}
	\Img\bigg\{\bigg(\frac{1}{2\pi\ep}\mathcal{C}(\phi)\overline{\phi}-\frac{1}{2\pi \ep}\tilde{\mathcal{C}}_{\ep,l}(\phi)\overline{\phi}+\Omega\big(\ep\overline{\phi}+l\big)\bigg)w\phi' \bigg\}=0.	
\end{equation}
Defining $A$ as
\begin{equation}\label{A}
A:=\bigg(\frac{1}{2\pi\ep}\mathcal{C}(\phi)\overline{\phi}-\frac{1}{2\pi\ep}\tilde{\mathcal{C}}_{\ep,l}(\phi)\overline{\phi}+\Omega\big(\ep\overline{\phi}+l\big)\bigg)w.
\end{equation}
we now get the following reformulation of \eqref{elliptic formulation-centered} in terms of the conformal parametrization of the problem:
\begin{subequations}\label{A_formulation}
	\begin{equation}
	\Img\big\{A\phi' \big\}=0,
	\end{equation}
	with the additional assumption that
	\begin{equation}
		|\ep A|>0\qquad\text{for }\ep\in\mathbb{R}^+.
	\end{equation}
	We require $\phi$ to have the regularity
	\begin{equation}
		\phi\in C^{k+\alpha}(\mathbb{T}),
	\end{equation}
	to satisfy the property
	\begin{equation}\label{non_intersect}
		\inf_{\tau, w}\big|\ep\phi(\tau)+\ep\phi(w)+2l\big|>0
	\end{equation}
	and to be of the form
	\begin{equation}\label{polar graph}
		\phi(e^{it})=\rho(t)e^{i\vartheta(t)},\quad\text{with }\vartheta'>0\quad\text{for }t\in\mathbb{R},
	\end{equation}	
\end{subequations}
where $\rho>0$ and $\vartheta$ are periodic real-valued functions in $C^{k+\alpha}$. We remark that \eqref{polar graph} indicates that $\phi(\mathbb{T})$ is a polar graph $r=R(\theta)$ for some $C^{k+\alpha}$ function $R$, (see \cite[Lemma 2.4]{hmw:global}).

\begin{remark}[Analyticity of the Cauchy integral]\label{rem:analytic}
	Of particular mention is the fact that any $\phi\in C^{k+\alpha}(\mathbb{T})$ satisfying \eqref{polar graph} also satisfies the condition
	\begin{equation}\label{injectivity}
	\inf_{\tau\neq w}\bigg|\frac{\phi(\tau)-\phi(w)}{\tau-w} \bigg|>0.
	\end{equation}
	This was proven in \cite[Lemma 2.6]{hmw:global}. The condition \eqref{injectivity} ensures that $\phi$ is injective and that $\phi'\neq0$. This not only guarantees that we have an equivalence between \eqref{elliptic formulation-centered} and \eqref{cauchy_formulation} and is a necessary requirement for $\phi$ to be able to be extended to a conformal map $\Phi$ on $\mathbb{C}\setminus\mathbb{D}$, but also ensures that the Cauchy integral operator $\mathcal{C}(\phi)$ \eqref{Ccauchy} is real-analytic in $\phi$. This result has been proven in \cite{ldcl:cauchy} and is also recalled in \cite{hmw:global}. Moreover, the property \eqref{non_intersect}, necessary assumption to ensure that the two patches $D_1$ and $D_2$ cannot intersect, yields by similar arguments as in \cite{ldcl:cauchy} that the operator $\tilde{\mathcal{C}}_{\ep,l}(\phi)$ must also be real-analytic in $\phi$. 
\end{remark}

\subsection{Formulation without singularity in $\varepsilon$}
The formulation \eqref{cauchy_formulation} contains a singularity at $\ep=0$. Although this will prove to not be a major issue for the global bifurcation, when constructing the local curve, we need to consider the trivial solutions, that is, the case when $\ep=0$. Thankfully, the singularity can be removed. This process was carried out in \cite{hm:pairs} and \cite{garcia:choreography}. However, for the sake of completeness we provide the key steps of the calculation. 

To begin with, notice that
\begin{equation*}
	\int_{\partial D_2}\frac{1}{z-\zeta}\,d\zeta=0,\qquad\text{if }z\in\partial D_1.
\end{equation*}
We can therefore rewrite \eqref{c-p} as 
\begin{equation*}
u_0(z)=\frac{1}{4\pi^2\ep^2}\overline{\int_{\partial D_1}\frac{\overline{z}-\overline{\zeta}}{z-\zeta}}\,d\zeta-\frac{1}{4\pi^2\ep^2}\overline{\int_{\partial D_2}\frac{\overline{\zeta}}{z-\zeta}}\,d\zeta.
\end{equation*}
As a result, we easily get
\begin{equation*}\label{u_0}
	u_0(\ep\phi(w)+l)=\frac{1}{4\pi^2\ep}\overline{\int_{\T}\frac{\overline{\phi(w)}-\overline{\phi(\zeta)}}{\phi(w)-\phi(\zeta)}\phi'(\zeta)\,d\zeta}-\frac{1}{4\pi^2}\overline{\int_{\T}\frac{\overline{\phi(\zeta)}\phi'(\zeta)}{\ep\phi(w)+\ep\phi(\zeta)+2l}\,d\zeta}.
\end{equation*}
Using the fact that we chose $\phi(w)=w+\ep f(w)$, we can now express \eqref{euler_im} as
\begin{equation}\label{formulation-without-epsilon}
	\frac{1}{2\pi}\textnormal{Im}\left[f'(w)\right]+\textnormal{Im}\left[\left\{\mathscr J(f,\ep)(w)-\Omega(\ep \overline{\phi(w)}+l)\right\}w\phi'(w)\right]=0,
\end{equation}
where
\begin{align}\label{J}
\mathscr J(f,\ep)(w)=&\frac{i}{4\pi^2}\int_{\T} \frac{\overline{\phi(w)-\phi(\xi)}}{\phi(w)-\phi(\xi)}f'(\xi)d\xi-\frac{1}{2\pi^2}\int_{\T} \frac{\textnormal{Im}\left[(w-\xi)(f(\overline{w})-f(\overline{\xi}))\right]}{(w-\xi)(\phi(w)-\phi(\xi))}d\xi\nonumber\\
&-\frac{i}{4\pi^2}\int_{\T} \frac{\overline{\phi(\xi)}}{\varepsilon \phi(\xi)+\varepsilon\phi(w)+2l}\phi'(\xi)d\xi.
\end{align}

Let us remark that the last integral in $\mathscr{J}(f,\ep)$ refers to the interaction between the left hand side domain and the right hand side domain. Indeed, the denominator in this term is simply the distance between a point on $\partial D_1$ and a point on $\partial D_2$. If the two domains are well-separated, this denominator never vanishes thus precluding the formation of any undesired singularity.

\section{Quantitative bounds}\label{sect:bounds}
This section is devoted to proving certain quantitative bounds on the solutions to the problem \eqref{elliptic formulation-centered}. Indeed we can show that many quantities can be bounded by $\delta>0$ in the inequalities
\begin{equation}\label{delta}
	\big|\partial_r\tilde{\Psi}\big|\geq\delta\quad\text{on }\partial D\qquad\text{and}\qquad \big|\ep\phi(\tau)+\ep\phi(w)+2l\big|\geq\delta.
\end{equation}  
The first term in \eqref{delta} assumes that the angular velocity of the patch is uniformly bounded away from $0$  and the second term is a necessary condition such that the two patches in the physical domain do not self-intersect with each other. We will use the notation $\lesssim_\delta$ whenever a constant depends on $\delta$. 

The first step will be collecting some rigidity theorems. While also very interesting on their own, they will prove to be very useful tools throughout the paper.

\subsection{Rigidity theorem}
There are a number of results concerning the rigidity for rotating single case. First, Fraenkel \cite{Fraenkel} showed that the stationary ($\Omega=0$) single patches must to be radial. The same result was obtained by Hmidi \cite{hmidi:trivial} for $\Omega\leq 0$ and $\Omega=\frac12$. Finally, G\'omez-Serrano, Park, Shi and Yao \cite{GSPSY-rigidity} proved that a rotating single patch (with vorticity normalized to $1$ inside the patch) with $\Omega\in(-\infty,0]\cup [\frac12,+\infty)$ must be radial. They use a clever idea involving the first variation of some appropriate energy functional. In Remark 2.3 of \cite{GSPSY-rigidity} they state that a similar proof also works in the case of disconnected patches as long as each connected component is simply--connected, which is the case of the corotating patches. Hence, their result adapted to our framework says that any corotating patches whose vorticity is normalized to $1$ should have $\Omega\in(0,\frac12)$. For the sake of completeness, we give here the proof for non-normalized corotating patches since the vorticity inside the desingularization patches is $\frac{1}{\pi \ep^2}$. The proof follows along the same lines as in \cite{GSPSY-rigidity}.

In the second part of the theorem, we prove a new rigidity theorem which holds for all $\ep$ sufficiently small. Although the first bound in Theorem \ref{thm:rigidity1} is useful for proving the analyticity of the boundary of the patches, the second part of the theorem will be very important in showing that $\ep$ can only go to $0$ along the local bifurcation curve. 

\begin{theorem}\label{thm:rigidity1}
	(i) If $\omega_0=(\pi\ep^2)^{-1}(\chi_{D_1}+\chi_{D_2})$ is a solution of the system \eqref{elliptic formulation} then
	\begin{equation}\label{slov10}
	\Omega\in\Big(0,\frac{1}{2\pi\ep^2}\Big].
	\end{equation}
	
	(ii) Moreover, if $\ep\leq l/10$ then
	\begin{equation}\label{end1}
	|\Omega|\lesssim |l|^{-2},
	\end{equation}
	where here $l$ denotes the distance of the center of the patch $D_1$ to the $y$-axis.
\end{theorem}

\begin{proof}
	(i) The proof is inspired by the recent paper \cite{GSPSY-rigidity}. We define
	$$
	I:=-\int_{D_1}v_1\cdot \nabla f_\Omega dx,
	$$
	where
	\begin{equation}\label{slov11}
	f_\Omega(x):=\frac{1}{\pi\ep^2} \chi_{D_1}\star \mathcal{N}+\frac{1}{\pi\ep^2} \chi_{D_2}\star \mathcal{N}-\frac{\Omega}{2}|x|^2,
	\end{equation}
	and $\mathcal{N}(z)=(2\pi)^{-1}\log |z|$ is the Newtonian potential. By \eqref{slov4} we have
	\begin{equation}
	\label{end3}
	\begin{split}
	&f_\Omega(x)=c_1 \quad \textnormal{on }\partial D_1,\qquad \Delta f_\Omega=\frac{1}{\pi\ep^2}-2\Omega\quad \textnormal{in } D_1.
	\end{split}
	\end{equation}
	We remark that the function $f_\Omega$ is actually the relative stream function  $\Psi_1$ of associated to the patch $D_1$.
	
	We consider vector fields $v_1=-\nabla\varphi_1$, with 
	\begin{equation}\label{slov6}
	\begin{split}
	&\varphi_1(x)=\frac{|x|^2}{2}+p_1(x) \quad \text{in } D_1,\\
	&p_1(x):=\ep^2 p\Big(\frac{x-l}{\ep}\Big),
	\end{split}
	\end{equation}
	where the function $p$ is defined  by the conditions
	\begin{equation}\label{p0}
	\Delta p(x)=-2 \quad \textnormal{ in }  {D},\qquad\qquad p(x)=0\quad \textnormal{ on } \partial {D},
	\end{equation}
	In particular,
	\begin{align*}\label{pm}
	\Delta p_1(x)=-2 \quad \textnormal{in }  {D}_1,\qquad p_1(x)=0 \quad \textnormal{on } \partial {D}_1.
	\end{align*}
	The key point is that the function $p:D\to\R$ is positive and satisfies the inequality (see \cite{GSPSY-rigidity})
	\begin{equation}\label{JaviPa}
	\frac{1}{4\pi}|D|^2-\int_{D}p(y) dy\geq 0,
	\end{equation}
	with equality only if $D$ is a disk. 
	
	Since $v_1$ is divergence free and $f_\Omega=c_1$ on $\partial D_1$ (see \eqref{end3}) we have 
	\begin{equation}\label{I=0}
	I=-\int_{D_1}v_1\cdot \nabla (f_\Omega-c_1) dx=0.
	\end{equation}
	On the other hand, using the expression of $v_1$ we can write $I$ as follows
	\begin{align*}
	I=&\int_{D_1}x\cdot \nabla f_\Omega dx+\int_{D_1}\nabla p_1(x)\cdot \nabla f_\Omega dx= I_1+I_2+I_3,
	\end{align*}
	where
	\begin{align*}
	I_1:=\frac{1}{\pi \ep^2}\int_{D_1}x\cdot \nabla (\mathcal{N}\star \chi_{D_1}) dx+\frac{1}{\pi \ep^2}\int_{D_1}x\cdot \nabla (\mathcal{N}\star \chi_{D_2}) dx,
	\end{align*}
	\begin{align*}
	I_2:=-\Omega \int_{D_1}|x|^2dx,
	\end{align*}
	\begin{align*}
	I_3:=\int_{D_1}\nabla p_1(x)\cdot \nabla f_\Omega dx.
	\end{align*}
	We calculate now two of these three integrals. We have
	\begin{equation*}
	\begin{split}
	I_1&=\frac{1}{2\pi^2 \ep^2}\int_{D_1}\int_{D_1}x\cdot \frac{x-y}{|x-y|^2} dydx+\frac{1}{2\pi^2 \ep^2}\int_{D_1}\int_{D_2}x\cdot \frac{x-y}{|x-y|^2} dydx\\
	&=\frac{1}{4\pi^2 \ep^2}\int_{D_1}\int_{D_1}(x-y)\cdot \frac{x-y}{|x-y|^2} dydx+\frac{1}{4\pi^2 \ep^2}\int_{D_1}\int_{D_1}(x+z)\cdot \frac{x+z}{|x+z|^2} dzdx\\
	&=\frac{\ep^2}{2\pi^2}|D|^2,
	\end{split}
	\end{equation*}
	\begin{equation*}
	\begin{split}
	I_3=-\int_{D_1}p_1(x)\cdot \Delta f_\Omega dx=-\ep^2\int_{D}\ep^2p(y)\cdot \Big(\frac{1}{\pi\ep^2}-2\Omega\Big)dy=-\frac{\ep^2}{\pi}\int_{D}p(y) dy+2\Omega\ep^4\int_{D}p(y) dy.
	\end{split}
	\end{equation*}
	Moreover, by the standard rearrangement inequality,
	\begin{equation*}
	\int_{D_1}|x|^2dx=\frac{1}{2}\int_{D_1\cup D_2}|x|^2dx\geq \frac{1}{4\pi}|D_1\cup D_2|^2=\frac{1}{\pi}|D_1|^2=\frac{\ep^4}{\pi}|D|^2.
	\end{equation*}
	
	We are now ready to prove the desired conclusion \eqref{slov10}. If $\Omega\leq 0$ then
	\begin{equation*}
	\begin{split}
	I_1+I_2+I_3&\geq \frac{\ep^2}{2\pi^2}|D|^2+|\Omega|\frac{\ep^4}{\pi}|D|^2-\frac{\ep^2}{\pi}\int_{D}p(y) dy-2|\Omega|\ep^4\int_{D}p(y) dy\\
	&=\frac{\ep^2}{\pi}\Big[\frac{1}{2\pi}|D|^2-\int_{D}p(y) dy\Big]+2|\Omega|\ep^4\Big[\frac{1}{2\pi}|D|^2-\int_{D}p(y) dy\Big]\\
	&\geq \frac{\ep^2}{4\pi^2}|D|^2,
	\end{split}
	\end{equation*}
	using \eqref{JaviPa} in the last line. This contradicts the identity $I=0$ in \eqref{I=0}. On the other hand, if $\Omega> (2\pi\ep^2)^{-1}$ then
	\begin{equation*}
	\begin{split}
	I_1+I_2+I_3&\leq \frac{\ep^2}{2\pi^2}|D|^2-\Omega\frac{\ep^4}{\pi}|D|^2-\frac{\ep^2}{\pi}\int_{D}p(y) dy+2\Omega\ep^4\int_{D}p(y) dy\\
	&=\frac{\ep^2}{\pi}\Big[\frac{1}{2\pi}|D|^2-\int_{D}p(y) dy\Big]-2\Omega\ep^4\Big[\frac{1}{2\pi}|D|^2-\int_{D}p(y) dy\Big]\\
	&\leq-\frac{1}{4\pi}|D|^2\Big(2\Omega\ep^4-\frac{\ep^2}{\pi}\Big),
	\end{split}
	\end{equation*}
	using again \eqref{JaviPa} in the last line. This contradicts the identity $I=0$ in \eqref{I=0}, which completes the proof of \eqref{slov10}.
	
	(ii) As before we define 
	$$
	J:=-\int_{D_1}w_1\cdot \nabla f_\Omega dx,
	$$
	where $f_\Omega$ is defined as in \eqref{slov11}, $w_1:=-\nabla\varphi_1$, and $\varphi_1$ is defined by
	\begin{equation*}\label{slov12}
	\varphi_1(x)=\frac{|x-q_1|^2}{2}+p_1(x) \quad \text{in } D_1,\qquad q_1=l+\ep\widetilde{q}.
	\end{equation*}
	The functions $p$ and $p_1$ are defined as in \eqref{slov6}--\eqref{p0}. The choice of the point $q_1=l+\ep\widetilde{q}$ is important; we will make this choice later, satisfying $|\widetilde{q}|\lesssim 1$.
	
	As in \eqref{I=0} we see that $J=0$. On the other hand, using the expression of $w_1$ we can write $J$ as follows
	\begin{align*}
	J=&\int_{D_1}(x-q_1)\cdot \nabla f_\Omega dx+\int_{D_1}\nabla p_1(x)\cdot \nabla f_\Omega dx= J_1+J_2+J_3+J_4,
	\end{align*}
	where
	\begin{align*}
	J_1:=\frac{1}{\pi \ep^2}\int_{D_1}(x-q_1)\cdot \nabla (\mathcal{N}\star \chi_{D_1}) dx,
	\end{align*}
	\begin{align*}
	J_2:=\frac{1}{\pi \ep^2}\int_{D_1}(x-q_1)\cdot \nabla (\mathcal{N}\star \chi_{D_2}) dx,
	\end{align*}
	\begin{align*}
	J_3:=-\Omega \int_{D_1}(x-q_1)\cdot xdx,
	\end{align*}
	\begin{align*}
	J_4:=\int_{D_1}\nabla p_1(x)\cdot \nabla f_\Omega dx.
	\end{align*}
	We estimate now these four integrals. Assuming that $\ep\ll |l|$ and $|\widetilde{q}||\lesssim 1$ we have
	\begin{equation*}
	\begin{split}
	J_1&=\frac{1}{2\pi^2 \ep^2}\int_{D_1}\int_{D_1}(x-q_1)\cdot \frac{x-y}{|x-y|^2} dydx\\
	&=\frac{1}{4\pi^2 \ep^2}\int_{D_1}\int_{D_1}[(x-q_1)-(y-q_1)]\cdot \frac{(x-q_1)-(y-q_1)}{|(x-q_1)-(y-q_1)|^2} dydx\\
	&=\frac{\ep^2}{4\pi^2}|D|^2,
	\end{split}
	\end{equation*}
	\begin{equation*}
	\begin{split}
	|J_2|&=\Big|\frac{1}{2\pi^2 \ep^2}\int_{D_1}\int_{D_2}(x-q_1)\cdot \frac{x-y}{|x-y|^2} dydx\Big|\lesssim\frac{1}{2\pi^2 \ep^2}\int_{D_1} |x-q_1| \frac{1}{|l|}|D_2|dx\lesssim \frac{\ep^3}{|l|}|D|^2,
	\end{split}
	\end{equation*}
	\begin{equation*}
	\begin{split}
	J_3=-\Omega\ep^2\int_{D}(l+\ep y-q_1)\cdot (l+\ep y)dy=-\Omega\ep^3\int_{D}(y-\widetilde{q})\cdot (l+\ep y)dy,
	\end{split}
	\end{equation*}
	\begin{equation*}
	\begin{split}
	J_4=-\int_{D_1}p_1(x)\cdot \Delta f_\Omega dx=-\ep^2\int_{D}\ep^2p(y)\cdot \Big(\frac{1}{\pi\ep^2}-2\Omega\Big)dy=-\frac{\ep^2}{\pi}\int_{D}p(y) dy+2\Omega\ep^4\int_{D}p(y) dy.
	\end{split}
	\end{equation*}
	The main point is that we can now find $\widetilde{q}$ with $|\widetilde{q}|\lesssim 1$ such that $J_3\in[|\Omega|\ep^3|D||l|,2|\Omega|\ep^3|D||l|]$. This is important to obtain the second term in left hand side of \eqref{slov9} which will then ultimately yield the second inequality in \eqref{slov8}. 
	
	Recall the main inequality \eqref{JaviPa}. 
	Since $J=0$ we have $|J_1+J_3+J_4|\leq |J_2|$, therefore
	\begin{equation}\label{slov9}
	\Big[\frac{\ep^2}{4\pi^2}|D|^2-\frac{\ep^2}{\pi}\int_{D}p(y) dy\Big]+|\Omega|\ep^3|D||l|\lesssim \frac{\ep^3}{|l|}|D|^2+|\Omega|\ep^4\int_{D}p(y) dy\lesssim \frac{\ep^3}{|l|}|D|^2+|\Omega|\ep^4|D|^2.
	\end{equation}
	If $\ep/|l|\ll 1$ then the last term in the right-hand side can be absorbed into the left-hand side, thus
	\begin{equation*}
	\Big[\frac{\ep^2}{4\pi^2}|D|^2-\frac{\ep^2}{\pi}\int_{D}p(y) dy\Big]+|\Omega|\ep^3|D||l|/2\lesssim \frac{\ep^3}{|l|}|D|^2,
	\end{equation*}
	which gives our main conclusion
	\begin{equation}\label{slov8}
	\begin{split}
	&\Big[\frac{1}{4\pi}|D|^2-\int_{D}p(y) dy\Big]\lesssim \frac{\ep}{|l|}|D|^2,\\
	&|\Omega||l|^2\lesssim 1.
	\end{split}
	\end{equation}
	In particular, this completes the proof of \eqref{end1}. The inequality in the first line of \eqref{slov8} is also useful and will be exploited later in Lemma \ref{thm:rigidity2} to prove that the domain $D$ must be close to a disk as $\ep\to 0$. 
\end{proof}

A consequence of the above theorem is the following bound on the gradient of $\tilde{\Psi}$.

\begin{corollary}\label{lem:elliptic_estimates}
	Suppose that $(\tilde{\Psi},\Omega,\ep)$ solves \eqref{elliptic formulation-centered}. Then there exists a constant $C$ depending only on $\beta$ such that
	\begin{equation}\label{grad_bounds_tilde_Psi}
		\|\partial_z\tilde{\Psi}\|_{C^\beta(\overline{D})}\leq C.
	\end{equation}
\end{corollary}
\begin{proof}
	We begin by noticing that the function $\tilde{\Psi}_1:=\tilde{\Psi}+\tfrac12\ep^2\Omega|z|^2+\Omega\ep l\cdot z+\tfrac12\Omega l^2$ satisfies
	\begin{equation*}
		\Delta\tilde{\Psi}_1=\frac{1}{\pi}\chi_{D(z)}+\frac{1}{\pi}\chi_{D(-z-2l/\ep)}.
	\end{equation*} 
	Moreover, from \eqref{grad_Psi}, we see that $\nabla\tilde{\Psi}_1\to0$ as $|z|\to\infty$. By standard elliptic theory, see for instance \cite{GT}, we thus get
	\begin{equation*}
		\|\partial_z\tilde{\Psi}_1\|_{C^\beta(\overline{D})}\leq C_1,
	\end{equation*}
	where the constant $C_1$ only depends on $\beta$. Moreover, from the rigidity result Theorem~\eqref{thm:rigidity1}, we have bounds on $\ep^2\Omega$ for all $\ep$ and on $\Omega l$ for $\ep$ sufficiently small, which yields the desired result \eqref{grad_bounds_tilde_Psi}. 
\end{proof}

\subsection{Main quantitative bounds}
In this section, we will use \eqref{delta} and the results from the previous subsection to bound important quantities in our problem. We begin by showing that, provided \eqref{delta} holds, we get uniform lower bounds on the relative velocity field for the centered renormalized patch $D$, as well as can ensure that the boundary $\partial D$ is graphical. The proofs in this section closely follow the arguments in \cite{hmw:global}.

\begin{lemma}\label{lem:Psi_r}
	Suppose that $(\tilde{\Psi},\Omega,\ep)$ solves \eqref{elliptic formulation-centered} and that \eqref{delta} holds. Then
	\begin{equation}\label{bounds on A and arg}
		|\ep A|\gtrsim_\delta1\qquad\text{and}\qquad\bigg|\arg\bigg(\frac{w\phi'}{\phi}\bigg)-\frac{\pi}{2}\bigg|\gtrsim_\delta1,
	\end{equation}
	where here $A$ is defined as in \eqref{A}.
\end{lemma}
\begin{proof}
	If \eqref{delta} holds, then we directly obtain that
	\begin{equation*}
		\frac{|\nabla\tilde{\Psi}|^2}{4}\geq\frac{\delta^2}{4}.
	\end{equation*}
	Since we can see from Section~\ref{sect:formulation} that $\ep A$ is directly linked to the gradient of $\tilde{\Psi}$, we immediately get the first uniform lower bound in \eqref{bounds on A and arg}. 
	
	For the second one, we differentiate $\tilde{\Psi}\circ\phi\equiv0$. This yields
	\begin{equation}\label{differentiation}
	|\phi|\operatorname{tan}\bigg(\operatorname{arg}\frac{w\phi'}{\phi}\bigg)=\frac{|\phi|\Img\big(w\phi'/\phi\big)}{\operatorname{Re}\big(w\phi'/\phi\big)}.
	\end{equation}
	Since $\phi$ is a polar graph (see Section~\ref{sect:formulation}), we can write
	\begin{equation*}
	\phi(e^{it})=\rho(t)e^{i\varphi(t)},
	\end{equation*}
	for $t\in\R$. A direct calculation shows that
	\begin{equation*}
	\varphi'(t)=\operatorname{Re}\bigg(\frac{e^{it}\phi'(e^{it})}{\phi(e^{it})}\bigg)\qquad\text{and}\qquad \rho'(t)=-\rho(t)\Img\bigg(\frac{e^{it}\phi'(e^{it})}{\phi(e^{it})}\bigg).
	\end{equation*}
	Plugging this into the last term in \eqref{differentiation} gives us
	\begin{equation*}
	\frac{|\phi|\Img\big(w\phi'/\phi\big)}{\operatorname{Re}\big(w\phi'/\phi\big)}=-\frac{\rho'}{\varphi'}=\frac{\tilde{\Psi}_\theta}{\tilde{\Psi}_r}.
	\end{equation*}
	We thus deduce that
	\begin{equation}\label{tan_bounds}
	\tan\bigg(\bigg\|\arg\frac{w\phi'}{\phi}\bigg\|_{L^\infty(\T)}\bigg)=\bigg\|\frac{\tilde{\Psi}_\theta/\rho}{\tilde{\Psi}_r}\bigg\|_{L^\infty(\partial\Omega)}\leq\frac{\|\tilde{\Psi}_\theta/\rho\|_{L^\infty(\partial D)}}{\delta}\lesssim_\delta1,
	\end{equation}
	where the last step follows from the fact that Lemma~\ref{lem:elliptic_estimates} ensures that $\|\partial_z\tilde{\Psi}\|_{C^{1/2}(\overline{D})}\leq C$. From \eqref{tan_bounds}, we hence get
	\begin{equation*}
	\max_{\T}\bigg|\operatorname{arg}\frac{w\phi'}{\phi} \bigg|\gtrsim_\delta\frac{\pi}{2},
	\end{equation*} 
	thus concluding the proof. 
\end{proof}

From the previous lemma, we now see that \eqref{delta} can be rewritten as 
\begin{equation}\label{uniform_bounds_delta}
\bigg\|\operatorname{arg}\frac{w\phi'}{\phi} \bigg\|_{L^\infty}<\frac{\pi}{2}-\delta,\qquad\text{and}\qquad\inf_{\mathbb{T}}|\ep A|>\delta\qquad\text{and}\qquad \big|\ep\phi(\tau)+\ep\phi(w)+2l\big|\geq\delta.
\end{equation}

The next two statements provide us with lower bounds on $\phi$ and $\phi'$ as well as uniform bounds on $\phi$ in $C^{1+\alpha}$, all dependent on $\delta$ from \eqref{uniform_bounds_delta}. These results will have a two-fold purpose. On the one hand, they will help winnow out alternatives along the global bifurcation curve, and on the other, they will be key in proving both upper and lower bounds on the bifurcation parameter $\ep$. The arguments are similar to those in \cite{hmw:global}. 

\begin{lemma} \label{lem:uniform}
	Suppose $(\phi,\ep)$ solves \eqref{A_formulation} and that \eqref{uniform_bounds_delta} holds for some $\delta>0$. Then  $\|\phi\|_{C^{1+\alpha}}\lesssim_\delta1$.
\end{lemma}
\begin{proof}
	We recall that the problem can be rewritten as the Riemann--Hilbert problem $\Img(A\phi')=0$ where $A$ defined as in \eqref{A} can be rewritten as
	\begin{equation*}
	\ep A=2\partial_z\tilde{\Psi}(\phi(w))w.
	\end{equation*}
	Then \eqref{uniform_bounds_delta} directly yields that
	\begin{equation}\label{bounds_partial_Phi}
	|\ep A|=2|\partial_z\tilde{\Psi}(\phi(w))w|>\delta.
	\end{equation}
	Since the winding number of $\ep A$ is zero by Lemma~\ref{lem:winding_number}, applying Lemma~\ref{lem:RH_problem} yields
	\begin{equation}\label{phi'}
	\phi'(w)=\operatorname{exp}\bigg\{\frac{w}{2\pi}\int_{\T}\frac{1}{\tau-w}\bigg[\frac{1}{\xi}\operatorname{arg}\bigg(\frac{\partial_z\tilde{\Psi}(\phi(\xi))\xi}{\overline{\partial_z\tilde{\Psi}(\phi(\xi))\xi}} \bigg)\bigg]_{\xi=w}^{\xi=\tau}\,d\tau\bigg\}.
	\end{equation}
	Combining Lemmas~\ref{lem:Koebe_1/4} and~\ref{lem:bounds_on_phi'}, a Sobolev embedding yields
	\begin{equation*}
	\|\phi\|_{C^\sigma}<C\qquad\text{for some }\sigma\text{ depending on }p.
	\end{equation*}
	From Lemma~\ref{lem:elliptic_estimates}, for some arbirary but fixed $\beta$ we have $\|\partial_z\tilde{\Psi}\|_{C^\beta(\overline{D})}<C$. Hence we get $\| \partial_z\tilde{\Psi}\circ\phi\|_{C^{\sigma\beta}(\mathbb{T})}<C$. From \eqref{bounds_partial_Phi}, one can now easily show that
	\begin{equation*}
	\bigg\|\frac{1}{\tau}\operatorname{arg}\bigg(\frac{\partial_z\tilde{\Psi}(\phi(\tau))\tau}{\overline{\partial_z\tilde{\Psi}(\phi(\tau))\tau}}\bigg)\bigg\|_{C^{\sigma\beta}}<C.
	\end{equation*}
	Using the fact that the Cauchy integral is a bounded operator $C^{\sigma\beta}\to C^{\sigma\beta}$, composition with the exponential now yields $\|\phi'\|_{C^{\sigma\beta}}<C$. Repeating the above argument with $\sigma=\beta=\sqrt{\alpha}$ yields $\|\phi'\|_{C^\alpha}<C$.
\end{proof}

\begin{lemma}\label{lem:phi_bounds}
	Let $(\phi,\ep)$ is a solution to \eqref{A_formulation} and suppose that \eqref{uniform_bounds_delta} holds for some $\delta>0$. Then 
	\begin{equation*}
	|\phi'|,|\phi|\gtrsim_\delta1.
	\end{equation*}
\end{lemma}
\begin{proof}
	By Lemma~\ref{lem:uniform}, we know that $\|\phi\|_{C^{1+\alpha}}\lesssim_\delta1$. For the lower bound on $|\phi'|$ we simply take the multiplicative inverse of \eqref{phi'} and use the bounds on $\|\phi\|_{C^\alpha}$. By a similar argument as the one in the proof of Lemma~\ref{lem:uniform}, we find that $\|1/\phi'\|_{C^\alpha}\gtrsim_\delta 1$, which in turn implies that $\operatorname{min}_\mathbb{T}|\phi'|\gtrsim_\delta1$.
	
	We now turn to the lower bound on $|\phi|$. To begin with, we easily get a lower bound on $\|\phi\|_{L^\infty}$ using the Schwarz lemma. Indeed, since the function $g(w)=\Phi(w)/w$ is holomorphic at infinity with $g(\infty)=1$, by the modulus maximum principle we get
	\begin{equation}\label{phi_L_infty}
	\|\phi\|_{L^\infty(\mathbb{T})}=\|g\|_{L^\infty(\mathbb{C}\setminus\mathbb{D})}>1.		
	\end{equation}
	Notice that since $\phi$ is continuous, there exists $\theta_1,\theta_2\in[0,2\pi]$ such that
	$$
	\min_{\mathbb T}|\phi|=\phi(e^{i\theta_2}), \quad \max_{\mathbb T}|\phi|=\phi(e^{i\theta_1}).
	$$
	Hence
	\begin{equation*}
	\log\frac{\min_{\mathbb T}|\phi|}{\max_\mathbb T|\phi|}=\log\bigg|\frac{\phi(e^{i\theta_2})}{\phi(e^{i\theta_1})}\bigg|=-\operatorname{Re}\int_{\theta_1}^{\theta_2}\frac{d}{dt}\log\phi(e^{it})\,dt=\int_{\theta_1}^{\theta_2}\Img\frac{e^{it}\phi'(e^{it})}{\phi(e^{it})}\,dt.
	\end{equation*}
	We now estimate this integral to find
	\begin{equation*}
	\log\frac{\min_{\mathbb T}|\phi|}{\max_\mathbb T|\phi|}\leq\left|\int_{\theta_1}^{\theta_2}\frac{|\phi'(e^{it})|}{|\phi(e^{it})|}\,dt\right|\leq\frac{2\pi}{\cos(\pi/2-\delta_1)}.
	\end{equation*}
	Finally, taking exponentials and using \eqref{phi_L_infty}, yields
	\begin{equation*}
	\min_{\mathbb T}|\phi|\geq\max_\mathbb T|\phi|\exp\bigg(-\frac{2\pi}{\cos(\pi/2-\delta_1)} \bigg)>\exp\bigg(-\frac{2\pi}{\cos(\pi/2-\delta_1)} \bigg)\gtrsim_\delta1,
	\end{equation*}
	thus concluding the proof.
\end{proof}

\subsection{Upper bounds on $\ep$}
Using the results from the previous subsection, we find upper bounds on the bifurcation parameter $\ep$, depending only on $\delta$ from \eqref{uniform_bounds_delta} and on $l$, the distance of the center of the patch $D_1$ to the $y$-axis. On the one hand, we also prove a result which will later enable us to show that $\ep$ can only go to $0$ along the local curve of solutions.

\begin{lemma}\label{lem:ep_bounds}
	Let $(\phi,\ep)$ solve \eqref{A_formulation} and suppose that  \eqref{uniform_bounds_delta} holds for some $\delta>0$. Then $\ep\lesssim_\delta l$. 
\end{lemma}
\begin{proof}
	Since the centered renormalized patch $\overline{D}$ is compact, there exists an $w\in\mathbb{T}$ such that
	\begin{equation*}
	\phi(w)=(-x^*,0).
	\end{equation*}
	Since \eqref{uniform_bounds_delta} holds, we know from Lemma~\ref{lem:phi_bounds} that $|\phi|\gtrsim_\delta1$. Hence we have $x^*\geq1/C$. A point on $\partial D$ gets mapped to a point on $\partial D_1$ by $l+\ep\phi$. Hence we have
	\begin{equation*}
	l-\ep(x^*,0)\in\partial D_1.
	\end{equation*}
	Let us now assume that $\ep>Cl$. Then there exists a $w_1\in\partial D_1$ whose $x$-coordinate is negative. However, this contradicts the assumption that $\partial D_1\subset\{(x,y)\in\mathbb{R}^2:\,x>0\}$, necessary assumption to ensure that the patches $D_1$ and $D_2$ don't intersect. Hence, we must have $\ep\leq Cl$.
\end{proof} 

In what follows we define the $L^p(\T)$ norms, $p<\infty$, in the usual way
\begin{equation}\label{end0.5}
\|g\|_{L^p(\T)}^p:=\int_0^{2\pi}|g(e^{it})|^p\,dt=\int_{\T}|g(w)|^p(iw)^{-1}\,dw.
\end{equation}

\begin{lemma}\label{thm:rigidity2}
	Assume that $\omega_0=(\pi\ep^2)^{-1}(\chi_{D_1}+\chi_{D_2})$ is a solution of the system \eqref{elliptic formulation}. Assume that $\delta>0$, $C(\delta)\geq 1$ is a constant that depends only on $\delta$, and the function $\phi$ corresponding to $\omega_0$ (as in  \eqref{slov5}) satisfies the bounds
	\begin{equation}\label{end0}
	\|\phi'\|_{L^\infty(\T)}\leq C(\delta).
	\end{equation}
	Then $\phi$ is of the form $\phi(w)=w+\ep f(w)$ (as in \eqref{slov5}) with
	\begin{equation}\label{end2}
	\|f'\|_{L^2(\T)}\lesssim_\delta 1.
	\end{equation}
\end{lemma}

\begin{proof} We prove first that for any $\kappa>0$ there is $\ep_1=\ep_1(\delta,\kappa)$ such that if $\omega_0$ and $\phi$ are as in the statement of the lemma and $\ep\leq \ep_1$ then 
	\begin{equation}\label{Easyend2}
	\|\phi-w\|_{L^\infty(\T)}\leq\kappa.
	\end{equation}
	We argue by contradiction. Assume that there exists a sequence $\phi_n$ with corresponding $\ep_n$ such that
	\begin{equation}\label{contra}
	|\phi_n(w)-w|_{L^\infty}\geq\kappa,\qquad \lim_{n\to\infty}\ep_n=0.
	\end{equation}
	We have
	\begin{equation*}
	\phi_n(w)=w+\sum_{k\geq 1}a_{n,k}w^{-k}.
	\end{equation*}
	Since the sequence $\phi_n$ is uniformly bounded in $H^1$ (due to \eqref{end0}), we get that $\phi_n$ converges strongly to $\phi$, in any space weaker than $H^1$, in the sense that
	\begin{equation}\label{convergence}
	\lim_{n\to\infty}a_{n,k}=a_k\qquad\text{for every }k\geq1.
	\end{equation}
	Moreover, the uniform $H^1$ bounds imply the rapid decay
	\begin{equation}\label{decay}
	\sum_{k\geq 1}|ka_{n,k}|^2\lesssim_\delta 1\qquad\text{and hence also}\qquad\sum_{k\geq 1}|ka_{k}|^2\lesssim_\delta 1.
	\end{equation}
	We now define
	\begin{equation*}
	\Phi(w)=w+\sum_{k\geq 1}a_kw^{-k} \qquad\text{for }|w|\geq1.
	\end{equation*}
	The map $\Phi$ is holomorphic for $|w|\geq1$ due to the decay in \eqref{decay}. Moreover, from \eqref{convergence}--\eqref{decay} we see that $\Phi=\lim_{n\to\infty}\Phi_n$ uniformly in $\C\setminus \mathbb{D}$. Hence, by the Hurowitz Theorem, $\Phi$ must be a univalent conformal map from $\mathbb{C}\setminus\overline{\mathbb{D}}$ to $\mathbb{C}\setminus\overline{D}$. From \eqref{end0}, it is easy to see that $\phi$ must hence also be univalent and thus be a Jordan curve.
	
	From properties of conformal maps (see for instance Section 1.2 in \cite{pommerenke:book}), we know that the area $|D_n|^2$ converges to $|D|^2$. Moreover, since $D$ is a Jordan domain, we get
	\begin{equation*}
	\lim_{n\to\infty}\int_{D_n}p_n(y)dy=\int_{D}p(y)dy,
	\end{equation*} 
	where here the convergence of $p_n$ follows from classical maximum principle arguments. From \eqref{slov8}, we see that $D$ must be the disk $\mathbb{D}$, a contradiction to \eqref{contra}. This completes the proof of \eqref{Easyend2}.
	
	We turn now to the proof of \eqref{end2}. In view of the assumption \eqref{end0} we may assume that $\ep\leq\ep_0(\delta)$ is very small. We consider the formulation of the problem without singularity in $\ep$, as given in \eqref{formulation-without-epsilon}. We multiply both sides of the equality by $(-4\pi/w)f'(w)$ and integrate to get
	\begin{equation}\label{f_estimate}
	\int_{\T}\frac{1}{2\pi}\textnormal{Im}\left[f'(w)\right]\frac{-4\pi}{w}f'(w)dw+\int_{\T}\textnormal{Im}\left[\left\{\mathscr J(f,\ep)(w)-\Omega(\ep \overline{\phi(w)}+l)\right\}w\phi'(w)\right]\frac{-4\pi}{w}f'(w)dw=0.
	\end{equation}
	Using \eqref{end0.5} and the general form of $f$ in \eqref{slov5} we notice that
	\begin{equation*}
	\int_{\T}\frac{1}{2\pi}\textnormal{Im}\left[f'(w)\right]\frac{-4\pi}{w}f'(w)dw=\int_{\T}\frac{|f'(w)|^2-[f'(w)]^2}{iw}dw=\|f'\|_{L^2(\T)}^2.
	\end{equation*}
	We use the formulas \eqref{J} and \eqref{f_estimate} to conclude that
	\begin{equation}\label{f_estimate2}
	\|f'\|_{L^2(\T)}^2+J_1+J_2+J_3=0,
	\end{equation}
	where
	\begin{equation*}\label{f_estimate3}
	\begin{split}
	J_1&:=\int_{\T}\textnormal{Im}\left[w\phi'(w)\frac{i}{4\pi^2}\int_{\T} \frac{\overline{\phi(w)}-\overline{\phi(\xi)}}{\phi(w)-\phi(\xi)}f'(\xi)d\xi\right]\frac{-4\pi}{w}f'(w)dw,\\
	J_2&:=\int_{\T}\textnormal{Im}\left[w\phi'(w)\frac{-1}{2\pi^2}\int_{\T} \frac{\textnormal{Im}\big[(w-\xi)(\overline{f(w)}-\overline{f(\xi)})\big]}{(w-\xi)(\phi(w)-\phi(\xi))}d\xi\right]\frac{-4\pi}{w}f'(w)dw,\\
	J_3&:=\int_{\T}\textnormal{Im}\left[w\phi'(w)\left\{\frac{-i}{4\pi^2}\int_{\T} \frac{\overline{\phi(\xi)}}{\varepsilon \phi(\xi)+\varepsilon\phi(w)+2l}\phi'(\xi)d\xi-\Omega(\ep \overline{\phi(w)}+l)\right\}\right]\frac{-4\pi}{w}f'(w)dw.
	\end{split}
	\end{equation*}
	We will show that there are constants $C_1(\delta)\geq 1$ and $\ep_0(\delta)>0$ such that if $\ep\leq\ep_0(\delta)$ then
	\begin{equation}\label{estimate_J}
	|J_n|\leq C_1(\delta)\|f'\|_{L^2(\T)}+(1/10)\|f'\|_{L^2(\T)}^2,\qquad n\in\{1,2,3\}.
	\end{equation}
	Assuming these bounds, the desired conclusion \eqref{end2} would follow using \eqref{f_estimate2}.
	
	The bounds \eqref{estimate_J} follow easily if $n=3$ (in the stronger form $|J_3|\lesssim _\delta\|f'\|_{L^2(\T)}$) using the Cauchy inequality, \eqref{end0}, and \eqref{slov8}.  We prove these bounds now for $n=1$. Notice that
	\begin{equation*}
	\int_{\T}\frac{\overline{\phi(w)}-\overline{\phi(\xi)}}{w-\xi}f'(\xi)d\xi=0\qquad\text{ for any }w\in\T,
	\end{equation*}
	due to the form of the functions $f,\phi$ in \eqref{slov5}. Let $g(w):=\phi(w)-w=\ep f(w)$. In view of \eqref{Easyend2} we may assume that $\|g\|_{L^\infty(\T)}\leq \kappa$, for some sufficiently small constant $\kappa=\kappa(\delta)>0$. Then we estimate
	\begin{equation*}\label{step_0}
	\begin{split}
	\bigg|\int_{\T}\frac{\overline{\phi(w)}-\overline{\phi(\xi)}}{w+g(w)-\xi-g(\xi)}f'(\xi)d\xi\bigg|&=\bigg|\int_{\T}\frac{(\overline{\phi(w)}-\overline{\phi(\xi)})(g(w)-g(\xi))}{(w-\xi+g(w)-g(\xi))(w-\xi)}f'(\xi)d\xi\bigg|\\
	&\leq\int_{0}^{2\pi}\frac{\big|\phi(e^{it})-\phi(e^{is})\big|}{\big|(e^{it}-e^{is}+g(e^{it})-g(e^{is})\big|}\frac{\big|g(e^{it})-g(e^{is})\big|}{\big|e^{it}-e^{is}\big|}|f'(e^{is})|ds\\
	&\leq\int_0^{2\pi}\frac{\big|g(e^{it})-g(e^{is})\big|}{\big|e^{it}-e^{is}\big|}|f'(e^{is})|ds\\
	&\lesssim(\kappa C(\delta))^{1/2}\int_0^{2\pi}|e^{it}-e^{is}|^{-1/2}|f'(e^{is})|ds,
	\end{split}
	\end{equation*}
	for any $w=e^{it}\in\T$. The last inequality follows by estimating $|g(e^{it})-g(e^{is})|\lesssim (\kappa C(\delta))^{1/2}|e^{it}-e^{is}|^{1/2}$ (since $\|g'\|_{L^\infty(\T)}\lesssim C(\delta)$) and $|g(e^{it})-g(e^{is})|\lesssim\kappa$ (due to \eqref{Easyend2}). Taking the geometric mean and dividing by $|e^{it}-e^{is}|$ yields the claimed bound. Therefore
	\begin{equation*}
	\bigg\|\int_{\T}\frac{\overline{\phi(w)}-\overline{\phi(\xi)}}{w+g(w)-\xi-g(\xi)}f'(\xi)d\xi\bigg\|_{L^2_w(\T)}\lesssim(\kappa C(\delta))^{1/2}\|f'\|_{L^2(\T)}.
	\end{equation*}
	The desired bounds \eqref{estimate_J} for $n=1$ follow if $\kappa(\delta)$ is chosen sufficiently small (depending only on the constant  $C(\delta)$ in \eqref{end0}). 
	
	Finally, we bound the integral $|J_2|$. The main difficulty has to do with the factor $|\phi(w)-\phi(\xi)|$ in the denominator, which could be very small. Notice first that
	\begin{equation*}
	\int_{\T}\frac{\Img\big[(w-\xi)(\overline{f(w)}-\overline{f(\xi)}) \big]}{(w-\xi)(w-\xi)}d\xi=0
	\end{equation*}
	for any $w\in\T$, due to the form of the function $f$ in \eqref{slov5} and the residue theorem. Therefore
	\begin{equation*}
	\int_{\T}\frac{\Img\big[(w-\xi)(\overline{f(w)}-\overline{f(\xi)}) \big]}{(w-\xi)(\phi(w)-\phi(\xi))}d\xi=-\int_{\T}\frac{\Img\big[(w-\xi)(\overline{f(w)}-\overline{f(\xi)}) \big]}{(w-\xi)^2(\phi(w)-\phi(\xi))}\big(g(w)-g(\xi)\big)d\xi=:F_1(w)+F_2(w),
	\end{equation*} 
	where
	\begin{equation*}
	\begin{split}
	&F_1(w):=-\int_{|w-\xi|\leq \rho}\frac{\Img\big[(w-\xi)(\overline{f(w)}-\overline{f(\xi)}) \big]}{(w-\xi)^2(\phi(w)-\phi(\xi))}\big(g(w)-g(\xi)\big)d\xi,\\
	&F_2(w):=-\int_{|w-\xi|\geq \rho}\frac{\Img\big[(w-\xi)(\overline{f(w)}-\overline{f(\xi)}) \big]}{(w-\xi)^2(\phi(w)-\phi(\xi))}\big(g(w)-g(\xi)\big)d\xi,
	\end{split}
	\end{equation*}
	where $\rho=\rho(\delta)>0$ is a small constant to be fixed. Then we estimate, for any $w=e^{it}\in\T$,
	\begin{equation}\label{2_11}
	\begin{split}
	|F_1(w)|&=\bigg|\int_{|w-\xi|\leq\rho}\frac{\Img\big[(w-\xi)(\overline{f(w)}-\overline{f(\xi)}) \big]}{(w-\xi)^2(\phi(w)-\phi(\xi))}\ep\big(f(w)-f(\xi)\big)d\xi\bigg|\\
	&=\bigg|\int_{|w-\xi|\leq \rho}\frac{\Img\big[(w-\xi)(\overline{\phi(w)}-\overline{\phi(\xi)}-(\overline{w}-\overline{\xi})) \big]}{(w-\xi)^2(\phi(w)-\phi(\xi))}\big(f(w)-f(\xi)\big)d\xi\bigg|\\
	&\lesssim\int_{s\in[0,2\pi],\,|e^{it}-e^{is}|\leq\rho}\bigg|\frac{f(e^{it})-f(e^{is})}{e^{it}-e^{is}}\bigg|ds\\
	&\lesssim \rho^{1/2}\|f'\|_{L^2(\T)},
	\end{split}
	\end{equation} 
	where the estimate in the last line holds because $|f(e^{it})-f(e^{is})|\lesssim \|f'\|_{L^2(\T)}|e^{it}-e^{is}|^{1/2}$. Moreover, if $|w-\xi|\geq\rho$ and $\|g\|_{L^\infty(\T)}\leq\rho^2/10$ is sufficiently small then $|\phi(w)-\phi(\xi)|\geq |w-\xi|-|g(w)-g(\xi)|\geq |w-\xi|-\rho/5\geq |w-\xi|/2$, and we estimate
	\begin{equation}\label{2_17}
	\begin{split}
	|F_2(w)|&\lesssim\int_{s\in[0,2\pi],\,|e^{it}-e^{is}|\geq\rho}\frac{\big|(e^{it}-e^{is})(\overline{f(e^{it})}-\overline{f(e^{is})}) \big|}{\big|(e^{it}-e^{is})^2(\phi(e^{it})-\phi(e^{is}))\big|}\big|g(e^{it})-g(e^{is})\big|ds\\
	&\lesssim \rho\|f\|_{L^\infty(\T)}.
	\end{split}
	\end{equation}  
	The desired bounds \eqref{estimate_J} follow for $n=2$ from \eqref{2_11}--\eqref{2_17}, by taking $\rho$ sufficiently small.
\end{proof}

\section{Functional Analytic setting}
\subsection{Function spaces}
We will work in the Banach spaces 
\begin{align*}
&\mathcal{X}^{k+\alpha}=\bigg\{f\in C^{k+\alpha}(\mathbb{T})\colon f(w)=\sum_{n\geq 1}a_nw^{-n},\;a_n\in\mathbb{R}\bigg\},\\&
\mathcal{Y}^{k-1+\alpha}=\bigg\{f\in C^{k-1+\alpha}(\mathbb{T})\colon f(e^{i\theta})=\sum_{n\geq 2}a_n\sin(n\theta),\;a_n\in\mathbb{R}\bigg\}.
\end{align*}
We recall that $\ep f(w)=\phi(w)-w$.  For convenience we will work  in the open subspace
\begin{align*}
\mathcal{U}^{k+\alpha}:=\mathcal{U}_1^{k+\alpha}\cap\mathcal{U}_2^{k+\alpha}\cap\mathcal{U}_3^{k+\alpha},
\end{align*}
where
\begin{align*}
&\mathcal{U}_1^{k+\alpha}=\bigg\{(f,\ep)\in\mathcal{X}^{k+\alpha}\times\mathbb{R}:|\ep A|>0 \bigg\},\\
&\mathcal{U}_2^{k+\alpha}=\bigg\{(f,\ep)\in\mathcal{X}^{k+\alpha}\times\mathbb{R}:\operatorname{Re}\bigg(\frac{w\phi'}{\phi}\bigg)>0 \bigg\},\\
&\mathcal{U}_3^{k+\alpha}=\Big\{(f,\ep)\in\mathcal{X}^{k+\alpha}\times\mathbb{R}:\inf_{\tau\neq w}|\ep\phi(\tau)+\ep\phi(w)+2l|>0 \Big\}.
\end{align*}
The definition of the open subspaces comes from the requirement that \eqref{uniform_bounds_delta} must hold. We remark that, as we have seen in Section~\ref{sect:formulation}, the singularity at $\ep=0$ is removable and hence the definition of the space $\mathcal{U}_1$ does not prevent $\ep$ from going to $0$. Specifically, the point $(f,\ep)=(0,0)\in\mathcal{U}_1$.

\subsection{Operator formulation and solution set}
We define the nonlinear operator 
\begin{equation*}
\tilde{\mathscr F}^{k+\alpha}\colon \mathcal{U}^{k+\alpha}\times \mathbb{R}\to\mathcal{Y}^{k-1+\alpha},
\end{equation*}
where here
\begin{equation*}\label{nonlinear-op}
\tilde{\mathscr F}^{k+\alpha}(f,\ep,\Omega):=\Img\bigg\{\bigg(\Omega\big((\overline w+\ep \overline{f})+l\big)+\tfrac{1}{\pi\ep}\mathcal{C}(w+\ep f)(\overline{w}+\ep\overline{f})-\tfrac{1}{\pi \ep}\tilde{\mathcal{C}}_{\ep,l}(w+\ep f)(\overline{w}+\ep\overline{f}) \bigg)w(1+\ep f') \bigg\}.
\end{equation*}

Let us remark that the functions in the range space $\mathcal{Y}^{k-1+\alpha}$ have vanishing first Fourier coefficient $a_1$. This technical point is due to the invertibility of the linear operator at the trivial solution in order to obtain the local curve of solutions. However, the nonlinear operator is not well-defined in those spaces. Nevertheless, we can work with the extra parameter $\Omega$ in order to get that the problem is well-posed. That is, that the first Fourier coefficient of $\tilde{\mathscr{F}}$ vanishes. As stated in \cite{hm:pairs, garcia:choreography} we can define the dependence of $\Omega$ on $f$ and $\ep$ in the following way:
\begin{equation}\label{omega}
\Omega(f,\ep)=\frac{\int_{\T} \mathscr{J}(f,\ep)(w)(w-\overline{w})(1+\ep f'(w))dw}{\int_{\T} (1+\ep f'(w))(w-\overline{w})(l+\ep \overline{w}+\ep^2 f(\overline{w})dw}.
\end{equation}
Moreover, in \cite{hm:pairs, garcia:choreography} we observe the following properties of $\Omega$.
\begin{proposition}\label{prop-omega}
	For any $\alpha\in(0,1)$ we have that $\Omega:\mathcal{U}^{k+\alpha}\times\R\rightarrow \R$ is well-defined. Furthermore,
	\begin{itemize}
		\item[i)] $\Omega(f,0)=\Omega_0$, for any $f$.
		\item[ii)] The first Fourier coefficient of $\tilde{\mathscr{F}}(f,\ep,\Omega)$ vanishes if and only if $\Omega=\Omega(f,\ep)$.
	\end{itemize}
\end{proposition}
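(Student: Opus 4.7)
The plan is to work with the desingularized formulation \eqref{formulation-without-epsilon} of $\tilde{\mathscr F}$, which is equivalent to \eqref{nonlinear-op} but better suited to Fourier analysis. My first observation is that the term $\tfrac{1}{2\pi}\Img[f'(w)]$ does not contribute to the $\sin\theta$ Fourier coefficient: since $f(w)=\sum_{n\geq 1}a_n w^{-n}$ with $a_n\in\R$, one has $\Img[f'(e^{i\theta})] = \sum_{n\geq 1} n a_n \sin((n+1)\theta)$, whose lowest mode is $\sin(2\theta)$. Hence the first Fourier coefficient of $\tilde{\mathscr F}$ is governed entirely by the bracketed nonlinear term.

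Next I would extract this coefficient. Using $\sin\theta = (w-\overline w)/(2i)$, $d\theta = dw/(iw)$, and the identity $a_1 = \tfrac{1}{\pi}\int_0^{2\pi}\Img[\,\cdot\,]\sin\theta\,d\theta$, the condition $a_1 = 0$ rewrites as
\begin{equation*}
\Img\!\int_\T \big\{\mathscr{J}(f,\ep)(w) - \Omega(\ep\overline{\phi(w)}+l)\big\}(w-\overline w)\phi'(w)\, dw = 0.
\end{equation*}
This relation is affine in $\Omega$, and solving uniquely reproduces \eqref{omega}, provided one verifies two things: (a) both contour integrals in the ratio are purely imaginary, so the outer $\Img$ may be dropped; and (b) the denominator is nonzero, so that $\Omega(f,\ep)$ is well-defined on $\mathcal{U}^{k+\alpha}$. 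For (a), I would use the involution $w\mapsto\overline w$ on $\T$ together with the identities $\phi(\overline w)=\overline{\phi(w)}$, $\phi'(\overline w)=\overline{\phi'(w)}$, and the analogous reality of $\mathscr{J}$, all of which follow from $a_n\in\R$; a direct change of variables then shows that each integral equals minus its own complex conjugate. Property (b) reduces to nonvanishing at $\ep=0$, which will be verified in what follows, and then extended by continuity.

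For part (i), I would evaluate \eqref{omega} at $\ep=0$, where $\phi(w)=w$ and $\phi'(w)=1$. The denominator becomes $l\int_\T(w-\overline w)\,dw = -2\pi i\,l$. For the numerator I would dissect the three pieces of $\mathscr{J}(f,0)$ in \eqref{J}. Using $\overline{(w-\xi)}/(w-\xi) = -1/(w\xi)$ on $\T$, and the fact that the Laurent expansion of $f'(\xi)/\xi$ has no $\xi^{-1}$ coefficient, the first piece vanishes identically. A parallel residue argument, based on the partial fraction $(\xi(w-\xi))^{-1} = w^{-1}(\xi^{-1}+(w-\xi)^{-1})$ and the absence of $\xi^{-1}$ coefficients in the relevant Laurent expansions, shows that the $f$-dependent second piece also integrates to zero against $d\xi$. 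Only the third piece survives, collapsing to the constant $1/(4\pi l)$, so the numerator equals $\tfrac{1}{4\pi l}\int_\T(w-\overline w)\,dw = -i/(2l)$. Dividing yields $\Omega(f,0) = 1/(4\pi l^2) = \Omega_0$, independent of $f$. The main technical obstacle is the residue bookkeeping required to kill the $f$-dependent contributions in $\mathscr{J}(f,0)$; once that is settled, both parts of the proposition follow.
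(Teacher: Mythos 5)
Your proposal is correct, and it is essentially the argument the paper leans on without writing it out: the paper itself offers no proof of this proposition, deferring to \cite{hm:pairs, garcia:choreography}, but every ingredient you use appears piecemeal elsewhere in the text. The Fourier-projection step (that the $\sin\theta$ coefficient of $\tilde{\mathscr F}$ is an affine function of $\Omega$ whose vanishing is equivalent to \eqref{omega}), the reality check that lets you drop the outer $\Img$, and the residue bookkeeping showing that the two $f$-dependent integrals in $\mathscr J(f,0)$ vanish while the third collapses to $1/(4\pi l)$ are exactly the computations carried out in Propositions~\ref{pro-trivial} and~\ref{prop-dfF} and Lemma~\ref{lem:epsilon0}; your evaluation $\Omega(f,0)=(-i/(2l))/(-2\pi i l)=\Omega_0$ is right.

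The one soft spot is the well-definedness claim. Your step (b) controls the denominator of \eqref{omega} only near $\ep=0$ by continuity, whereas the proposition asserts that $\Omega$ is well-defined on all of $\mathcal U^{k+\alpha}\times\R$. A direct residue computation gives the denominator as $-2\pi i\big(l+\ep l a_1+\ep^3\sum_{n\geq1}a_na_{n+1}\big)$, which is not manifestly nonzero for arbitrary $(f,\ep)\in\mathcal U^{k+\alpha}$; one needs either an extra argument (e.g.\ that the constraints defining $\mathcal U^{k+\alpha}$, in particular univalence and the polar-graph condition, force this quantity away from zero) or an explicit restriction of the domain. Since the paper does not address this point either, it is not a defect specific to your write-up, but it should be flagged rather than dismissed as a continuity argument.
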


Hence, from this point onwards, we will work with ${\mathscr{F}}^{k+\alpha}$  defined as
$$
\mathscr{F}^{k+\alpha}(f,\ep):=\tilde{\mathscr{F}}^{k+\alpha}(f,\ep,\Omega(f,\ep)),
$$
where we take into account that $\Omega$ is fixed and depends on $(f,\ep)$.

We will often rewrite this operator as
\begin{equation}\label{nonlinear_operator}
\mathscr F^{k+\alpha}(f,\ep)=\Img\big\{\mathscr A(f,\ep)(1+\ep f') \big\},
\end{equation}
where $\ep \mathscr A (f,\ep)=\ep A(\phi,\Omega(f,\ep),\ep)$ (see \eqref{A}) is analytic $\mathcal{U}_1^{k+\alpha}\to C^{k+\alpha}(\mathbb{T})$ for any $k\geq1$ and $\alpha\in(0,1)$. Moreover, the linearized operator is then given by
\begin{equation*}\label{linearized_operator}
\mathscr F_f^{k+\alpha}(f,\ep)g=\Img\big\{ \mathscr A(f,\ep)g'\big\}+\Img\big\{ (1+\ep f')\mathscr A_f(f,\ep)g\big\}.
\end{equation*}
Finally, for convenience of notation throughout the paper, following \cite{hmw:global}, we define the set of solutions as
\begin{equation*}
\mathscr S^{k+\alpha}:=\{(f,\ep)\in\mathcal{U}^{k+\alpha}:\mathscr F^{k+\alpha}(f,\ep)=0\}.
\end{equation*}
Throughout this entire section, $k\geq1$ and $\alpha\in(0,1)$ have been arbitrary. In the rest of the paper, we will for the most part fix $\alpha$ and fix $k=3$. For this case, for simplicity of notation, we denote 
\begin{align*}
\mathcal{X}:=\mathcal{X}^{3+\alpha},\qquad\mathcal{Y}:=\mathcal{Y}^{2+\alpha},\qquad\mathcal{U}:=\mathcal{U}^{3+\alpha},\qquad\mathcal{U}_i:=\mathcal{U}_i^{3+\alpha}\quad\text{for }i=1,2,\cdots5.
\end{align*} 
Moreover, we define
\begin{equation*}
\mathscr F:=\mathscr F^{3+\alpha}\qquad\text{and}\qquad\mathscr S:=\mathscr S^{3+\alpha}.
\end{equation*}

Finally, from Remark~\ref{rem:analytic} and thanks to $\mathcal{U}_1$ (which prevents any singularities in $\ep$) we obtain the fact that the operator $\mathscr F$ is real-analytic.

\begin{proposition}\label{prop-welldef}
	$\mathscr F:\mathcal{U}\rightarrow \mathcal{Y}$ is well-defined and real-analytic.
\end{proposition}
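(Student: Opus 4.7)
The plan is to decompose $\mathscr F$ into pieces whose analyticity is already established or follows from standard composition results, and then verify separately that the image sits in the correctly symmetrized space $\mathcal{Y}$. Recall
\begin{equation*}
\mathscr F(f,\ep)=\Img\Big\{\Big(\tfrac{1}{\pi\ep}\mathcal{C}(\phi)\overline{\phi}-\tfrac{1}{\pi\ep}\tilde{\mathcal{C}}_{\ep,l}(\phi)\overline{\phi}+\Omega(f,\ep)\big(\ep\overline{\phi}+l\big)\Big)w(1+\ep f')\Big\},
\end{equation*}
so the task splits into three: the Cauchy piece, the off-diagonal piece $\tilde{\mathcal{C}}$, and the rotational piece involving $\Omega(f,\ep)$.

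First I would handle the Cauchy piece. On $\mathcal{U}_1\cap\mathcal{U}_5$, the cited theorem gives that $\phi\mapsto\mathcal{C}(\phi)$ is a real-analytic map into $\mathscr L(C^{k+\alpha}(\T))$; composing with the continuous linear evaluation $L\mapsto L(\overline\phi)$ (which is in turn analytic in $\phi$, since $\phi\mapsto\overline\phi$ is $\R$-linear and continuous from $\mathcal{X}$ to $C^{k+\alpha}$) produces a real-analytic map $(f,\ep)\mapsto\mathcal{C}(\phi)\overline\phi$. The prefactor $1/\ep$ is analytic on $\mathcal{U}_5$ because $\ep>0$ there, so no singularity appears. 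For the off-diagonal operator $\tilde{\mathcal{C}}_{\ep,l}$, the definition of $\mathcal{U}_4$ keeps the denominator $\ep\phi(\tau)+\ep\phi(w)+2l$ uniformly bounded away from zero, so the integrand is a holomorphic function of $(f,\ep)$ in the relevant Banach-space sense; the standard argument via uniform convergence of power series (or the result in \cite{cristoforis-musolino}) yields real-analyticity of $(f,\ep)\mapsto\tilde{\mathcal{C}}_{\ep,l}(\phi)\overline\phi$. The rotational piece is a polynomial in $\ep$, $\overline\phi$ and $l$, hence trivially analytic; multiplication by $w(1+\ep f')$ and taking imaginary parts are continuous and real-linear, preserving analyticity.

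Next I would verify analyticity of the scalar $\Omega(f,\ep)$ from \eqref{omega}. Its numerator is an integral of products of the analytic pieces already discussed together with the analytic function $\mathscr J(f,\ep)$ (whose three integral terms are handled exactly as above: the first two use the analyticity of the Cauchy kernel on $\mathcal{U}_1$, the third uses non-vanishing of the mutual-distance denominator on $\mathcal{U}_4$); its denominator is a smooth function of the Fourier coefficients of $f$ and of $\ep$, and is non-vanishing on $\mathcal{U}$ by Proposition~\ref{prop-omega}. Thus $\Omega:\mathcal{U}\to\R$ is real-analytic, and consequently so is $\mathscr F(f,\ep)=\tilde{\mathscr F}(f,\ep,\Omega(f,\ep))$. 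Regularity-wise, $\mathcal{C}(\phi)\overline\phi,\tilde{\mathcal{C}}_{\ep,l}(\phi)\overline\phi\in C^{k+\alpha}(\T)$ while $1+\ep f'\in C^{k-1+\alpha}(\T)$, so the product lies in $C^{k-1+\alpha}(\T)$, which is the regularity dictated by~$\mathcal{Y}^{k-1+\alpha}$.

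Finally I would verify that the image actually lies in $\mathcal{Y}$, i.e.\ that the Fourier expansion contains only $\sin(n\theta)$ with $n\geq 2$. Absence of cosine modes and the constant term follows from the reality condition $a_n\in\R$ built into $\mathcal{X}$: this $x$-axis symmetry implies $\phi(\overline w)=\overline{\phi(w)}$, from which a direct check shows the bracketed quantity inside $\Img\{\cdot\}$ satisfies $G(\overline w)=\overline{G(w)}$ after multiplication by $w$, so its imaginary part on $\T$ is an odd function of $\theta$ with no constant mode. Vanishing of the $\sin\theta$ coefficient is exactly the content of Proposition~\ref{prop-omega}(ii), which was the very reason $\Omega(f,\ep)$ was chosen as in~\eqref{omega}. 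The main (mild) obstacle is bookkeeping in the off-diagonal integral $\tilde{\mathcal{C}}$ to confirm that the reality/symmetry structure is preserved; this is a matter of a change of variable $\tau\mapsto\overline\tau$ combined with $\phi(\overline\tau)=\overline{\phi(\tau)}$ and poses no conceptual difficulty. Assembling these three paragraphs completes the proof.
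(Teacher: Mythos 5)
Your proposal is correct and follows essentially the same route as the paper: analyticity of the Cauchy piece via the de Cristoforis--Preciso theorem on $\mathcal{U}_1\cap\mathcal{U}_5$, analyticity of $\tilde{\mathcal{C}}_{\ep,l}$ from the non-vanishing denominator guaranteed by $\mathcal{U}_4$, and the condition $\ep>0$ from $\mathcal{U}_5$ to absorb the $1/\ep$ prefactor. You are more explicit than the paper (which declares the result "immediate") about the analyticity of $\Omega(f,\ep)$ and about checking that the image lands in $\mathcal{Y}$ via the symmetry and Proposition~\ref{prop-omega}(ii), but these are exactly the ingredients the paper's argument implicitly relies on.
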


Although of lesser importance for the construction of the local curve, Proposition \ref{prop-welldef} will be key for the global continuation. Indeed, the analyticity of our operator gives us access to powerful global bifurcation theorems.

\section{Existence Results}
We now have all the necessary tools to construct our local and global curve of solutions.

\subsection{Local curve of solutions}\label{sect:local}

This section aims to study the existence of the local curve. Although this construction was already carried out in \cite{hm:pairs} (see also \cite{garcia:choreography}), for the benefit of the reader, we recall all the details here.

At the level of the local curve, we have to check properties of the linearized operator at $\varepsilon=0$. To this end, it is advantageous to use the formulation without the singularity in $\varepsilon$, as given in \eqref{formulation-without-epsilon}. As a result, we can reformulate $\mathscr{F}$ as
\begin{align}
\mathscr F(f,\ep)(w)=&\frac{1}{2\pi}\textnormal{Im}\left[f'(w)\right]+\textnormal{Im}\left[\left\{\mathscr J(f,\ep)(w)-\Omega(f,\ep)(\overline{{\ep}\phi(w)}+l)\right\}w\phi'(w)\right],\label{F-without-epsilon}
\end{align}
where we recall that $\phi(w)=w+\varepsilon f(w)$, $\Omega$ is defined in \eqref{omega} and $\mathscr J$ is defined in \eqref{J}. As remarked previously, the definition of $\mathcal{U}$ ensures that the denominator in $\mathscr J$ never vanishes.

Physically, the bifurcation parameter $\ep$ we use denotes the conformal radius of the first patch, and by symmetry, that of the second one as well. Since we wish to bifurcate from point vortices, we begin by checking that $\ep=0$ does indeed yield a trivial solution to our problem.  
\begin{proposition}[Trivial solution]\label{pro-trivial}
	$\mathscr F\left(0,0\right)(w)=0$, for any $w\in\T$.
\end{proposition}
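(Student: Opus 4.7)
The plan is to evaluate $\mathscr F(0,0)(w)$ directly from the formulation \eqref{F-without-epsilon} without singularity, which is preferable to \eqref{nonlinear_operator} precisely because the case $\ep=0$ is excluded from $\mathcal{U}_5$. The strategy is to check that each piece collapses to something explicit and that the two surviving contributions cancel.

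First I would substitute $f\equiv 0$ and $\ep=0$ into \eqref{F-without-epsilon}. Then $\phi(w)=w$, $\phi'(w)=1$, and $f'\equiv 0$, so the leading $\tfrac{1}{2\pi}\operatorname{Im}[f'(w)]$ vanishes. In the bracketed term, $\overline{\ep\phi(w)}+l$ reduces to the constant $l$. By Proposition~\ref{prop-omega}(i) we have $\Omega(0,0)=\Omega_0=\tfrac{1}{4\pi l^2}$, so the subtracted part is simply $\Omega_0 l=\tfrac{1}{4\pi l}$.

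Next I would compute $\mathscr J(0,0)(w)$ from \eqref{J}. The first integral contains $f'(\xi)$ and therefore vanishes, and the second integral contains $f(\overline{w})-f(\overline{\xi})$ and likewise vanishes. Only the third integral survives, and at $\ep=0$ the denominator is the constant $2l$ and $\phi'(\xi)=1$, so
\begin{equation*}
\mathscr J(0,0)(w)=-\frac{i}{4\pi^2}\int_{\T}\frac{\overline{\xi}}{2l}\,d\xi=-\frac{i}{8\pi^2 l}\int_{\T}\frac{d\xi}{\xi}=-\frac{i}{8\pi^2 l}\cdot 2\pi i=\frac{1}{4\pi l},
\end{equation*}
using $\overline{\xi}=1/\xi$ on $\T$ and the standard residue evaluation.

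Combining these computations, the bracketed factor in \eqref{F-without-epsilon} becomes
\begin{equation*}
\mathscr J(0,0)(w)-\Omega(0,0)\bigl(\overline{0}+l\bigr)=\frac{1}{4\pi l}-\frac{1}{4\pi l^2}\cdot l=0,
\end{equation*}
so $\mathscr F(0,0)(w)=\operatorname{Im}[0\cdot w\cdot 1]=0$ for every $w\in\T$, as claimed. There is no real obstacle here: the proof is a short computation, and the only point worth emphasizing is that the algebraic cancellation between $\mathscr J(0,0)$ and $\Omega_0 l$ is precisely the statement that the two point vortices \eqref{points} rotate with the angular velocity $\Omega_0=1/(4\pi l^2)$ from \eqref{Omega0}, which is how the trivial solution is consistent with our rotating-frame formulation.
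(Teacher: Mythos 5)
Your proof is correct and follows essentially the same route as the paper: evaluate \eqref{F-without-epsilon} at $(0,0)$, use $\Omega(0,0)=\Omega_0$ from Proposition~\ref{prop-omega}, compute $\mathscr J(0,0)=\tfrac{1}{4\pi l}$ by residues, and observe the cancellation with $\Omega_0 l$. No issues.
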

\begin{proof}
	Recall from Proposition \ref{prop-omega} that $\Omega(f,0)=\Omega_0$. From \eqref{F-without-epsilon} we find that
	\begin{align*}
	\mathscr F(0,0)(w)=\textnormal{Im}\left[w\mathscr J(0,0)(w)-\Omega_0 l w\right].
	\end{align*}
	Let us compute $\mathscr J(0,0)$ by using the expression given in \eqref{J}:
	\begin{align*}
	\mathscr J(0,0)(w)=&-\frac{i}{4\pi^2}\frac{1}{2l}\int_{\T} \frac{d\xi}{\xi}=\frac{1}{4\pi l}.
	\end{align*}
	Hence
	$$
	\mathscr F(0,0)(w)=\textnormal{Im}\left[w\left\{\frac{1}{4\pi l}-\Omega_0 l \right\}\right],
	$$
	which from the expression of $\Omega_0$ in \eqref{Omega0} implies that $\mathscr F\left(0,0\right)(w)=0$, for any $w\in\T$.
\end{proof}

The main tool we will use in order to construct a local curve of solutions is the analytic implicit function theorem recalled in Appendix~\ref{app:bif_thms}. We thus begin by computing the Fréchet derivative of the operator $\mathscr F$, as defined in \eqref{F-without-epsilon}, at the trivial solution $(0,0)$.

\begin{proposition}[Expression of $\partial_{f} \mathscr F(0,0)$.] \label{prop-dfF}
	$$
	\partial_f \mathscr F(0,0)=\frac{1}{2\pi}\textnormal{Im}\left[h'(w)\right].
	$$
\end{proposition}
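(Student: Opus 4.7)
The plan is to differentiate the singularity-free formulation \eqref{F-without-epsilon}. The key observation is that in $\phi = w + \ep f$, $\phi' = 1 + \ep f'$, and $\ep\overline{\phi(w)}$, the function $f$ appears only through the product $\ep f$, so at $\ep = 0$ a perturbation in the $f$-direction leaves each of these quantities unchanged. Combined with Proposition~\ref{prop-omega}(i), which gives $\Omega(f, 0) = \Omega_0$ for every $f$ and hence $\partial_f \Omega(0, 0) = 0$, the Fréchet derivative reduces to
\begin{equation*}
\partial_f \mathscr{F}(0, 0)h = \tfrac{1}{2\pi}\textnormal{Im}\bigl[h'(w)\bigr] + \textnormal{Im}\bigl\{(\partial_f \mathscr{J}(0, 0)\, h)\, w\bigr\},
\end{equation*}
so it suffices to verify that $\partial_f \mathscr{J}(0, 0) h \equiv 0$.

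Inspecting \eqref{J}, the third integral contributes nothing to $\partial_f$ at $\ep = 0$ because $f$ enters it only through the product $\ep \phi$, and differentiating the first two terms yields
\begin{equation*}
\partial_f \mathscr{J}(0, 0) h = \frac{i}{4\pi^2}\int_\T \frac{\overline{w-\xi}}{w-\xi}\, h'(\xi)\, d\xi \;-\; \frac{1}{2\pi^2}\int_\T \frac{\textnormal{Im}\bigl[(w-\xi)(h(\bar w) - h(\bar \xi))\bigr]}{(w-\xi)^2}\, d\xi.
\end{equation*}
I would evaluate each integral using the unit-circle identity $\bar \tau = 1/\tau$. The kernel of the first collapses to $-1/(w\xi)$; inserting the Fourier expansion $h = \sum_{n \geq 1} a_n w^{-n}$ produces a Laurent integrand whose $\xi^{-1}$ coefficient is zero, so the integral vanishes. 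For the second, $h$ having real coefficients gives $h(\bar w) = \overline{h(w)}$ on $\T$, and the factorization $w^n - \xi^n = (w-\xi)\sum_{k=0}^{n-1} w^{n-1-k}\xi^k$ together with the symmetry and conjugation of this polynomial puts the integrand in explicit Laurent form, which I then integrate term by term.

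The main obstacle is the residue bookkeeping in the second integral. It amounts to checking that for each $n \geq 1$ and each $k \in \{0, \dots, n-1\}$, neither $\xi^k$ (coming from the polynomial itself) nor $\xi^{k-n-1}$ (coming from its conjugate counterpart) equals $\xi^{-1}$; this is immediate from $k \geq 0$ and $k - n - 1 \leq -2$. With this verification in hand, $\partial_f \mathscr{J}(0, 0) h = 0$, and the stated formula for $\partial_f \mathscr{F}(0, 0)$ follows.
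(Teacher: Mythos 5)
Your proposal is correct and follows essentially the same route as the paper: differentiate the singularity-free formulation \eqref{F-without-epsilon}, use $\Omega(f,0)=\Omega_0$ to kill the $\partial_f\Omega$ term, note that every other occurrence of $f$ outside $\mathscr J$ carries a factor of $\ep$, and reduce the claim to $\partial_f\mathscr J(0,0)h=0$, which is the same pair of integrals the paper dispatches by the residue theorem. The only difference is that you spell out the residue bookkeeping (the kernel collapsing to $-1/(w\xi)$ and the exponents $k\ge 0$, $k-n-1\le -2$ never hitting $-1$) that the paper delegates to the citations \cite{hm:pairs,garcia:choreography}; that verification is accurate.
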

\begin{proof}
	From Proposition \ref{prop-omega} we have that $\Omega(f,0)=\Omega_0$ for any $f$, which implies that $\partial_f \Omega(0,0)=0$. Moreover, we calculate
	
	\begin{align*}
	\partial_f \mathscr F(0,0)h(w)&=\frac{1}{2\pi}\textnormal{Im}\left[h'(w)\right]+\lim_{\varepsilon\rightarrow 0}\, \varepsilon \textnormal{Im}\left[\left\{\mathscr J(0,0)(w)-\Omega_0 l\right\}wh'(w)\right]\\
	&+\textnormal{Im}\left[w\partial_f \mathscr J(0,0)h(w)\right]\\
	&=\frac{1}{2\pi}\textnormal{Im}\left[h'(w)\right]+\textnormal{Im}\left[w\partial_f \mathscr J(0,0)h(w)\right].
	\end{align*}
	Computing $\partial_f \mathscr J(0,0)$ from \eqref{J} yields
	\begin{align*}
	\partial_f \mathscr J(0,0)h(w)=\frac{i}{4\pi^2}\int_{\T}\frac{\overline{w-\xi}}{w-\xi}h'(\xi)d\xi-\frac{1}{2\pi^2}\int_{\T} \frac{\textnormal{Im}\left[(w-\xi)(h(\overline{w})-h(\overline{\xi}))\right]}{(w-\xi)^2}d\xi=0,
	\end{align*}
	where, as in \cite{hm:pairs,garcia:choreography}, the last integrals vanish by the residue theorem.
\end{proof}

We are now ready to construct our local curve. The main idea is to \textit{desingularize} the point vortices by perturbative methods. That is, each point along the local curve denotes patches of infintesimally small radii $\ep>0$.

\begin{theorem}[Local curve of solutions]\label{thm:local}
	There exists $\varepsilon_0>0$ and an analytic function $f(\varepsilon)$ such that $\mathscr F(f(\ep),\ep)=0$, for any $\varepsilon\in[0,\varepsilon_0)$. Moreover, the following properties hold: 
	\begin{enumerate}[label=\rm(\roman*)]
		\item \label{uniqueness} \textup{(Uniqueness)} if $(f,\ep)\in\mathcal{X}$ are sufficiently small, then $\mathscr{F}(f,\ep)=0$ implies $f=f(\ep)$;
		
		\item \label{invertibility} \textup{(Invertibility)} for all $0\leq\ep<\ep_0$, the linearized operator $\mathscr{F}_f(f(\ep),\ep):\mathcal X \rightarrow \mathcal Y$ is invertible. 
	\end{enumerate}
	Finally, we denote by 
	\begin{equation*}
	\mathscr{C}_\textup{loc}=\{(f(s),\ep(s)):0\leq s\ll1 \}\subset\mathcal{X}
	\end{equation*}
	the local continuous one-parameter curve of nontrivial solutions to $\mathscr{F}(f,\ep)=0$ for $\ep\in[0,\ep_0)$.
\end{theorem}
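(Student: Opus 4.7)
The plan is to apply the analytic implicit function theorem recalled in Appendix~\ref{app:implicit_function} to $\mathscr F$ written in the desingularized form \eqref{F-without-epsilon}, which is well-defined and real-analytic on a neighborhood of $(0,0)$ in $\mathcal{X}\times\mathbb R$. The constraint $\varepsilon>0$ embedded in $\mathcal{U}_5$ is only needed to make sense of the singular Cauchy formulation; in the desingularized form one may bifurcate directly from $\varepsilon=0$. Propositions \ref{pro-trivial}, \ref{prop-welldef} and \ref{prop-dfF} respectively supply the trivial solution $\mathscr F(0,0)=0$, the real-analyticity, and the explicit linearization $\partial_f\mathscr F(0,0)h=\tfrac{1}{2\pi}\Img\{h'(w)\}$, so the only remaining hypothesis of the IFT is that $\partial_f\mathscr F(0,0)\colon \mathcal{X}\to\mathcal{Y}$ is a Banach space isomorphism.

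This isomorphism property is the heart of the matter and reduces to a direct Fourier computation. Writing $h(w)=\sum_{n\geq 1}b_n w^{-n}$ with $b_n\in\mathbb R$, one finds
\begin{equation*}
\partial_f\mathscr F(0,0)h(e^{i\theta})=\frac{1}{2\pi}\sum_{n\geq 1}n b_n \sin\bigl((n+1)\theta\bigr)=\frac{1}{2\pi}\sum_{m\geq 2}(m-1)b_{m-1}\sin(m\theta).
\end{equation*}
The index shift automatically places the image in $\mathcal{Y}$ (whose elements have vanishing first Fourier mode), and the diagonal multiplier $b_{m-1}\mapsto (m-1)b_{m-1}/(2\pi)$ is a bijection for every $m\geq 2$. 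Inversion amounts to a continuous antidifferentiation $C^{k-1+\alpha}(\mathbb T)\to C^{k+\alpha}(\mathbb T)$, which exactly compensates for the loss of one derivative in $h\mapsto h'$; hence $\partial_f\mathscr F(0,0)$ is a bounded linear isomorphism between $\mathcal{X}$ and $\mathcal{Y}$.

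With all the hypotheses of the analytic IFT in place, one obtains $\varepsilon_0>0$ and a real-analytic curve $\varepsilon\mapsto f(\varepsilon)$ with $f(0)=0$ satisfying $\mathscr F(f(\varepsilon),\varepsilon)=0$ on $[0,\varepsilon_0)$; the local uniqueness statement \ref{uniqueness} is part of the IFT itself. For the invertibility assertion \ref{invertibility}, I would invoke the openness in the operator norm of the set of Banach space isomorphisms $\mathcal{X}\to\mathcal{Y}$, together with the continuity of $\varepsilon\mapsto\mathscr F_f(f(\varepsilon),\varepsilon)$ provided by the analyticity of $\mathscr F$; being an isomorphism at $\varepsilon=0$, it remains so for all sufficiently small $\varepsilon\geq 0$, after possibly shrinking $\varepsilon_0$. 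The only remaining bookkeeping is to check that $(f(\varepsilon),\varepsilon)\in\mathcal{U}$ for $\varepsilon\in(0,\varepsilon_0)$: the conditions $\mathcal{U}_3$ and $\mathcal{U}_4$ hold at the unperturbed configuration (a pair of well-separated unit disks centered at $\pm l$) and are open, so persist under small perturbations in $\mathcal{X}$; $\mathcal{U}_5$ is automatic; $\mathcal{U}_2$ (non-vanishing of $A$) follows from a direct expansion showing that $A=O(1/\varepsilon)$ as $\varepsilon\to 0^+$. I do not foresee a serious obstacle here, as every step reduces either to an explicit Fourier computation or to a standard openness argument, which is why the theorem admits such a compact proof despite the intricacy of $\mathscr F$.
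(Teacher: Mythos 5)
Your proposal is correct and follows essentially the same route as the paper: the analytic implicit function theorem applied to the desingularized formulation \eqref{F-without-epsilon} at $(0,0)$, with the isomorphism of $\partial_f\mathscr F(0,0)h=\tfrac{1}{2\pi}\Img\{h'\}$ (your Fourier computation makes explicit what the paper only asserts) and invertibility along the curve obtained by perturbation. The only cosmetic difference is that for part (ii) the paper verifies by hand, via residue computations and Taylor expansion in $\varepsilon$, that $\partial_f\mathscr F(f,\ep)=\partial_f\mathscr F(0,0)+\varepsilon\mathscr R(f,\varepsilon)$, whereas you invoke the openness of the set of isomorphisms together with operator-norm continuity of the linearization; these are the same argument at different levels of explicitness.
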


\begin{proof}
	From Proposition \ref{prop-welldef}, we have that $\mathscr F:\mathcal{U}\rightarrow  \mathcal{Y}$ is analytic.
	Moreover $\mathscr F(0,0)(w)=0$, for any $w\in\T$ via Proposition \ref{pro-trivial}. We now apply the analytic version of the implicit function theorem (see Theorem~\ref{thm:implicit_function} in Appendix~\ref{app:bif_thms}) to $\mathscr F$ (see \cite{kielhofer}).
	
	From the expression of $\partial_f \mathscr F(0,0)$ we observed that this Fréchet derivative is clearly an isomorphism from $\mathcal X$ to $\mathcal Y$. Indeed, this follows from the fact that we have set the condition in the range space that the first Fourier coefficient must vanish. The uniqueness \ref{uniqueness} follows directly from the implicit function theorem. 
	
	It remains to show \ref{invertibility}. Since $\partial_f \mathscr F(0,0)$ is an isomorphism, due to the continuity of $\partial_{f}\mathscr F$,  it follows that for $\varepsilon$ small $\partial_f \mathscr F(f,\ep)$ also is an isomorphism (see for example, \cite[Lemma 2.5.1]{bt:analytic}).
\end{proof}

Finally, we conclude this section by remarking that when $\ep=0$ we must necessarily have the trivial solution $f=0$. 
\begin{lemma}\label{lem:epsilon0}
	For any $(f,\ep)\in\mathscr S$, if $\ep=0$ then $f=0$.
\end{lemma}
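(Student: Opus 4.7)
My plan is to exploit the $\ep$-regularized formulation \eqref{F-without-epsilon} and simply evaluate it at $\ep = 0$. Since at $\ep=0$ we have $\phi(w)=w$, $\phi'(w)=1$, and $\ep\overline{\phi(w)}=0$, formula \eqref{F-without-epsilon} collapses to
$$
\mathscr{F}(f,0)(w) = \frac{1}{2\pi}\textnormal{Im}[f'(w)] + \textnormal{Im}\!\left[\left\{\mathscr{J}(f,0)(w) - \Omega_0 l\right\} w\right],
$$
where $\Omega(f,0)=\Omega_0$ by Proposition \ref{prop-omega}. Crucially, although $\mathscr{J}$ is nonlinear in $f$ in general, when $\ep=0$ the $f$-dependence collapses: the first two integrals in \eqref{J} become exactly the integrals appearing in the computation of $\partial_f\mathscr{J}(0,0)h$ in Proposition \ref{prop-dfF} (with $h$ replaced by $f$), and the third integral becomes the $f$-independent integral computed in Proposition \ref{pro-trivial}.

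Therefore the same Residue theorem argument used in Proposition \ref{prop-dfF} shows that the first two integrals vanish identically, while the third evaluates to $\tfrac{1}{4\pi l}$ as in Proposition \ref{pro-trivial}. Hence $\mathscr{J}(f,0)(w) = \tfrac{1}{4\pi l}$ for every $f$, and using $\Omega_0 = 1/(4\pi l^2)$ from \eqref{Omega0} the bracketed constant cancels:
$$
\mathscr{F}(f,0)(w) = \frac{1}{2\pi}\textnormal{Im}[f'(w)].
$$

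It remains to observe that if this vanishes on $\mathbb{T}$, then $f=0$. Expanding $f(w)=\sum_{n\geq 1}a_n w^{-n}$ with $a_n\in\mathbb{R}$ and setting $w=e^{i\theta}$ gives
$$
\textnormal{Im}[f'(e^{i\theta})] = \sum_{n\geq 1} n a_n \sin((n+1)\theta),
$$
and orthogonality of $\{\sin((n+1)\theta)\}_{n\geq 1}$ on $[0,2\pi]$ forces $a_n=0$ for all $n\geq 1$, whence $f\equiv 0$. The argument is essentially routine once one recognizes that passing to $\ep=0$ kills the nonlinearity of $\mathscr{J}$ in $f$ and reduces the problem to the computations already performed for the trivial solution and for the linearization there; the only step requiring mild care is confirming that the two Residue theorem cancellations of Proposition \ref{prop-dfF} still apply when the argument is an arbitrary $f$ rather than a tangent direction $h$, which is immediate since those cancellations depend only on the holomorphic structure of the integrands and not on the smallness of $f$.
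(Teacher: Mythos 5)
Your proof is correct and is essentially the paper's own argument: evaluate \eqref{F-without-epsilon} at $\ep=0$, use $\Omega(f,0)=\Omega_0$ and the Residue theorem to show $\mathscr J(f,0)\equiv\tfrac{1}{4\pi l}$, so that $\mathscr F(f,0)=\tfrac{1}{2\pi}\Img[f'(w)]$, which forces $f'=0$ and hence $f=0$ by the structure of $\mathcal X$. The only difference is cosmetic — you spell out the Fourier orthogonality step that the paper leaves implicit.
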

\begin{proof}
	We will compute the expression \eqref{F-without-epsilon} for $\ep=0$. We will use the fact that $\Omega(0,f)=\Omega_0$ for any $f$ (see Proposition \ref{prop-omega}), where $\Omega_0$ is the angular velocity associated to the point vortices defined in \eqref{Omega0}. We get
	\begin{align*}
	\mathscr{F}(0,f)=\frac{1}{2\pi}\Img[f'(w)]+\Img\left[\left\{\mathscr{J}(0,f)-\Omega_0 l\right\}w\right],
	\end{align*}
	where
	\begin{align*}
	\mathscr{J}(0,f)=&\frac{i}{4\pi^2}\int_{\T} \frac{\overline{w-\xi}}{w-\xi}f'(\xi)d\xi-\frac{1}{2\pi^2}\int_{\T} \frac{\textnormal{Im}\left[(w-\xi)(f(\overline{w})-f(\overline{\xi}))\right]}{(w-\xi)^2}d\xi-\frac{i}{4\pi^2}\frac{1}{2l}\int_{\T} \frac{1}{\xi}d\xi.
	\end{align*}
	The last integral can be easily solved with the residue theorem as
	$$
	\int_{\T} \frac{1}{\xi}d\xi=2\pi i.
	$$
	Moreover, since $f\in \mathcal{X}$, the first two integral vanish via the residue theorem, (see the proof of Proposition \ref{prop-dfF}). As a result, we have
	\begin{align*}
	\mathscr{J}(0,f)=&\frac{1}{4\pi l},
	\end{align*}
	implying
	\begin{align*}
	\mathscr{F}(0,f)=\frac{1}{2\pi}\Img[f'(w)]+\Img\left[\left\{\frac{1}{4\pi l}-\Omega_0 l\right\}w\right]=\frac{1}{2\pi}\Img[f'(w)].
	\end{align*}
	Hence, if $(0,f)$ is a solution of \eqref{F-without-epsilon}, meaning that
	$$
	\mathscr{F}(0,f)(w)=\frac{1}{2\pi}\Img[f'(w)]=0,
	$$
	we get that $f'=0$ and consequently, due to the definition of $\mathcal{X}$, that $f=0$. 
\end{proof}

\subsection{Global continuation}\label{sect:global}
In this section, we will extend the local curve from Section~\ref{sect:local} to a global curve by means of analytic global bifurcation theory. Since our operator formulation \eqref{nonlinear_operator} is not defined at the point $(0,0)$, we cannot directly apply the global bifurcation theorem due to Dancer \cite{dancer:global}  and Buffoni--Toland \cite{bt:analytic}. Indeed, we must begin our continuation argument at an arbitrary point along the local curve, away from the trivial solution. As a result, we must slightly modify the standard analytic global bifurcation theorem to better suit our problem. We provide this result as well as an outline of the proof in Appendix~\ref{app:bif_thms}.

Contrary to solutions along the local curve, those along the global one are more than just mere perturbations of the trivial solution. We must therefore also take into account the nonlinearity of the problem and specifically, study some of its topological properties. In other words, we begin by verifying the linearized operator is Fredholm of index $0$ and that bounded and closed subsets of solutions are compact. The ideas are similar to those in \cite{hmw:global}.
\begin{lemma}[Fredholmness]\label{lem:fredholm}
	For any $(f,\ep)\in\mathscr S$ the linearized operator $\mathscr F_f(f,\ep)\colon\mathcal{X}\to\mathcal{Y}$ is Fredholm with index $0$.
\end{lemma}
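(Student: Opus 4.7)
The plan is to split $\mathscr{F}_f(f,\ep)$ into its highest-order part plus a lower-order remainder,
$$ Lg := \Img\{\mathscr{A}(f,\ep)\,g'\}, \qquad Kg := \Img\{(1+\ep f')\,\mathscr{A}_f(f,\ep)\,g\}, $$
and to treat them separately. The goal is to show that $K : \mathcal{X} \to \mathcal{Y}$ is compact and that $L : \mathcal{X} \to \mathcal{Y}$ is Fredholm of index $0$; stability of the Fredholm index under compact perturbations then yields the conclusion for $\mathscr{F}_f = L + K$.

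For the compactness of $K$, I would exploit the real-analyticity of the Cauchy integral operator $\mathcal{C}(\phi)$ and of its analog $\tilde{\mathcal{C}}_{\ep,l}(\phi)$ on $\mathcal{U} \times \mathbb{R}$ to conclude that $\mathscr{A}_f(f,\ep)$ is a bounded linear map $\mathcal{X} \to C^{3+\alpha}(\mathbb{T})$. Crucially, this step involves no loss of derivatives: the linearization places the directional perturbation $g$ inside a Cauchy-type kernel whose smoothness is controlled by $\phi$, not by $g$ itself. Multiplication by the fixed $C^{3+\alpha}$ function $1+\ep f'$ and taking the imaginary part preserve this regularity, so $K$ factors through the compact embedding $C^{3+\alpha}(\mathbb{T}) \hookrightarrow C^{2+\alpha}(\mathbb{T})$ (Arzel\`a--Ascoli), giving compactness into $\mathcal{Y}$.

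For the Fredholmness of $L$, I would recast $Lg = h$ as a scalar Riemann--Hilbert problem on the exterior of the disk. Setting $u = g'$, the constraint $f \in \mathcal{X}$ forces $u$ to extend to a holomorphic function on $\mathbb{C}\setminus\overline{\mathbb{D}}$ with a double zero at infinity and real Fourier coefficients; the equation becomes $\Img\{\mathscr{A}\,u\} = h$ on $\mathbb{T}$, with $\mathscr{A}$ nonvanishing thanks to $\mathcal{U}_2$ and with winding number $0$ by Lemma~\ref{lem:winding_number}. This is precisely the hypothesis needed to invoke the scalar Riemann--Hilbert lemma recorded in the appendix, which produces a Fredholm index equal to zero between the natural subspaces; transferring back through the antiderivative-isomorphism between the admissible $u$'s and $\mathcal{X}$ gives that $L : \mathcal{X} \to \mathcal{Y}$ is Fredholm of index $0$.

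The main technical obstacle I anticipate is the careful bookkeeping of the reality, decay at infinity, and vanishing first-Fourier-coefficient constraints that define $\mathcal{X}$ and $\mathcal{Y}$, so that the Riemann--Hilbert lemma applies with the index coming out to exactly zero rather than some nonzero integer. In particular, one must verify that the reality of the Fourier coefficients and the $x$-axis symmetry encoded in the range space $\mathcal{Y}$ are exactly compatible with the kernel and cokernel produced by the Riemann--Hilbert factorization associated with winding number zero; the compactness step for $K$, by contrast, is a routine consequence of the analyticity of the Cauchy integrals and of the gain of regularity in the lower-order term.
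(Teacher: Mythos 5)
Your proposal is correct and follows essentially the same route as the paper: the same splitting of $\mathscr F_f(f,\ep)$ into the principal part $\Img\{\mathscr A(f,\ep)g'\}$, handled via the winding-number-zero property and the scalar Riemann--Hilbert lemma of the appendix (which in fact gives invertibility, not just Fredholm index $0$), plus a lower-order term shown to be compact through the analyticity of $\mathscr A$ and a compact Hölder embedding. The only cosmetic difference is that you apply the compact embedding in the target space while the paper extracts a convergent subsequence in a lower Hölder norm on the source; both are standard and equivalent here.
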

\begin{proof}
	The main idea of the proof is to show that the linearized operator
	\begin{equation}\label{linearized_operator2}
	\mathscr F_f^{k+\alpha}(f,\ep)g=\Img\big\{ \mathscr A(f,\ep)g'\big\}+\Img\big\{ (1+\ep f')\mathscr A_f(f,\ep)g\big\},
	\end{equation} 
	consists of the sum of an invertible operator and a compact one. From Lemmas~\ref{lem:winding_number} and~\ref{lem:RH_problem}, the first operator on the right hand side of \eqref{linearized_operator2} is clearly invertible $\mathcal{X}\to\mathcal{Y}$. It remains to show that the second operator is compact $\mathcal{X}^{3+\alpha}\to\mathcal{Y}^{2+\alpha}$. To this end, we choose a bounded sequence $g_n\in\mathcal{X}^{3+\alpha}$, and extract a subsequence so that $g_n\to g$ in $\mathcal{X}^{3+\alpha/2}$. Since $\mathscr A\colon\mathcal{U}^{3+\alpha/2}\to C^{3+\alpha/2}(\mathbb{T})$ is analytic, we get
	\begin{equation*}
	\mathscr A_f(f,\ep)g_n\to\mathscr A_f(f,\ep)g\quad\text{in }C^{3+\alpha/2}(\mathbb{T}),
	\end{equation*} 
	and hence also in $\mathcal{Y}=\mathcal{Y}^{2+\alpha}$, thus concluding the proof.
\end{proof}

In order to show compactness, we first construct a suitable family of closed and bounded sets. To this end, we introduce the set $\mathcal{E}_\delta^{k+\alpha}$ defined by the inequalities
\begin{align}\label{inequalities}
&\min_\mathbb{T}\ep A(\phi,\Omega,\ep)\geq\delta, \qquad \frac{1}{1+|\ep|+\|\phi\|_{C^{1+\beta}}}\geq\delta,\qquad \frac{\pi}{2}-\max_\mathbb{T}\bigg|\operatorname{arg}\frac{w\phi'}{\phi}\bigg|\geq\delta,\\&\min_\mathbb T|\phi'|\geq\delta,\qquad \min_\mathbb T|\phi|\geq\delta,\qquad\min_\mathbb{T}|\ep\phi(\tau)+\ep\phi(w)+2l|\geq\delta\nonumber.
\end{align}

\begin{lemma}\label{lem:E}
	For any $\delta>0$, the set $\mathcal{E}_\delta^{k+\alpha}\subset\mathcal{X}^{k+\alpha}\times\mathbb{R}$ defined by the inequalities \eqref{inequalities} is a closed and bounded subset of $\mathcal{U}^{k+\alpha}$. Moreover, for any $(f,\ep)\in\mathcal{U}^{k+\alpha}$ there exists $\delta>0$ so that $(f,\ep)\in \mathcal{E}_\delta^{k+\alpha}$.
\end{lemma}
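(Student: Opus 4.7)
The lemma has three main assertions -- closedness, boundedness, and the inclusion $\mathcal{E}_\delta^{k+\alpha}\subset\mathcal{U}^{k+\alpha}$ -- plus the converse statement that the family $\{\mathcal{E}_\delta^{k+\alpha}\}_{\delta>0}$ exhausts $\mathcal{U}^{k+\alpha}$. My plan is to reduce each of these to continuity of the functionals appearing in \eqref{inequalities}, followed by a term-by-term comparison with the defining strict inequalities of the open sets $\mathcal{U}_i^{k+\alpha}$. The real work is in the continuity step; once it is in hand, the rest is bookkeeping.

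For closedness, I would first observe that every quantity in \eqref{inequalities} is a continuous function of $(f,\ep)\in\mathcal{X}^{k+\alpha}\times\R$. The map $(f,\ep)\mapsto\phi=w+\ep f$ is continuous into $C^{k+\alpha}(\T)$, so $\phi\mapsto\phi'$ is continuous into $C^{k-1+\alpha}(\T)$; the Cauchy-type operators $\mathcal{C}(\phi)$ and $\tilde{\mathcal{C}}_{\ep,l}(\phi)$ entering $A$ in \eqref{A} are real-analytic by the theorem recalled in Section~\ref{sect:functional_analytic}, and $\Omega(f,\ep)$ is analytic by Proposition~\ref{prop-omega}. Since taking the minimum or maximum of a continuous function over the compact circle $\T$ is continuous in the $C^0$ topology, every inequality in \eqref{inequalities} defines the preimage of a closed half-line under a continuous functional, and $\mathcal{E}_\delta^{k+\alpha}$ is the intersection of finitely many such preimages, hence closed.

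Boundedness (in the $C^{1+\beta}$-norm, which is the bound exploited downstream via compact embedding) follows immediately from the second inequality, which rearranges to $|\ep|+\|\phi\|_{C^{1+\beta}}\leq\delta^{-1}-1$. For the inclusion $\mathcal{E}_\delta^{k+\alpha}\subset\mathcal{U}^{k+\alpha}$ I would compare term-by-term: the lower bound on $|A|$ forces membership in $\mathcal{U}_2^{k+\alpha}$; the bound on $|\arg(w\phi'/\phi)|$ gives $\operatorname{Re}(w\phi'/\phi)>0$, i.e.\ $\mathcal{U}_3^{k+\alpha}$; the denominator bound is exactly $\mathcal{U}_4^{k+\alpha}$; and $\ep\geq\delta$ is $\mathcal{U}_5^{k+\alpha}$. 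The auxiliary conditions $\min_\T|\phi|\geq\delta$ and $\min_\T|\phi'|\geq\delta$ are redundant for set membership in $\mathcal{U}^{k+\alpha}$ -- the second follows from $\mathcal{U}_3\subset\mathcal{U}_1$ and the first from $\phi$ parametrizing a patch bounded away from the origin -- but are convenient to carry along for the limiting arguments of Section~\ref{sect:global}. For the moreover statement, given $(f,\ep)\in\mathcal{U}^{k+\alpha}$, each of the strict open inequalities in the definitions of $\mathcal{U}_2,\dots,\mathcal{U}_5$ is attained on the compact circle $\T$ and hence yields a strictly positive value of the corresponding functional; likewise $(1+|\ep|+\|\phi\|_{C^{1+\beta}})^{-1}$ and the minima $\min_\T|\phi|$, $\min_\T|\phi'|$ are all positive. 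Taking $\delta$ to be the minimum of this finite list of positive numbers places $(f,\ep)$ in $\mathcal{E}_\delta^{k+\alpha}$.

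The step I expect to be most delicate is the continuity of $A$ as a functional on the entire open set $\mathcal{U}^{k+\alpha}$, since $A$ depends nonlinearly on $(f,\ep)$ through both the Cauchy kernel $\mathcal{C}(\phi)$, which is singular on the diagonal $\tau=w$, and through $\tilde{\mathcal{C}}_{\ep,l}(\phi)$, whose denominator $\ep\phi(\tau)+\ep\phi(w)+2l$ vanishes precisely when $D_1$ and $D_2$ touch, as well as through the implicitly defined $\Omega(f,\ep)$ of \eqref{omega}. The definitions of $\mathcal{U}_4$ and $\mathcal{U}_5$ are designed exactly to keep these denominators uniformly bounded away from zero, so that the analyticity results of \cite{cristoforis-preciso,cristoforis-musolino} recalled in Section~\ref{sect:functional_analytic} apply and deliver the continuity needed to make the preceding paragraphs rigorous.
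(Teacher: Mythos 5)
Your overall strategy (term-by-term comparison for the exhaustion and the inclusion $\mathcal{E}_\delta^{k+\alpha}\subset\mathcal{U}^{k+\alpha}$, continuity of the defining functionals for closedness) is the same as the paper's, and those parts are fine. But there is a genuine gap in the closedness step. You assert that ``every quantity in \eqref{inequalities} is a continuous function of $(f,\ep)\in\mathcal{X}^{k+\alpha}\times\mathbb{R}$,'' and then conclude that each condition is the preimage of a closed half-line under a globally continuous functional. This is false for the first condition: $A$ involves $\tfrac{1}{\ep}\mathcal{C}(\phi)\overline{\phi}$ and $\tfrac{1}{\ep}\tilde{\mathcal{C}}_{\ep,l}(\phi)\overline{\phi}$, so the functional $(f,\ep)\mapsto\min_{\T}|A|$ is not even defined on all of $\mathcal{X}^{k+\alpha}\times\mathbb{R}$ --- only on $\mathcal{U}_1^{k+\alpha}\cap\mathcal{U}_4^{k+\alpha}\cap\mathcal{U}_5^{k+\alpha}$, where $\phi$ is injective with $\phi'\neq 0$, the denominator $\ep\phi(\tau)+\ep\phi(w)+2l$ is nonvanishing, and $\ep>0$. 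You do flag this in your final paragraph, but the fix you propose --- continuity of $A$ on the open set $\mathcal{U}^{k+\alpha}$ --- only yields that the first condition is \emph{relatively} closed in $\mathcal{U}^{k+\alpha}$; a set that is relatively closed in an open set need not be closed in the ambient Banach space, and ambient closedness is what the compactness argument of Lemma~\ref{lem:compactness} (and the choice $Q_j=\mathcal{E}_{1/j}$ in Theorem~\ref{thm: global bifurcation}) requires.

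The missing step is to use the \emph{quantitative} inequalities themselves to show that the ambient closure of $\mathcal{E}_\delta^{k+\alpha}$ stays inside the domain where $A$ is continuous. Concretely: the third, fourth and fifth inequalities of \eqref{inequalities} together give $\operatorname{dist}_{\mathcal{X}^{k+\alpha}}\big(\mathcal{E}_\delta^{k+\alpha},\partial\mathcal{U}_3^{k+\alpha}\big)>0$, so (using $\mathcal{U}_3^{k+\alpha}\subset\mathcal{U}_1^{k+\alpha}$) every limit point of $\mathcal{E}_\delta^{k+\alpha}$ lies in $\mathcal{U}_1^{k+\alpha}$; the sixth and seventh inequalities are closed conditions defined by functionals continuous on the whole space and keep limit points in $\mathcal{U}_4^{k+\alpha}\cap\mathcal{U}_5^{k+\alpha}$. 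Only after this can one invoke the analyticity of $(f,\ep)\mapsto A$ on $\mathcal{U}_1^{k+\alpha}\cap\mathcal{U}_4^{k+\alpha}\cap\mathcal{U}_5^{k+\alpha}$ to conclude that $\min_{\T}|A|\geq\delta$ is a closed condition. Note also that the conditions $\min_{\T}|\phi|\geq\delta$ and $\min_{\T}|\phi'|\geq\delta$, which you dismiss as redundant bookkeeping, are precisely what make the distance-to-$\partial\mathcal{U}_3^{k+\alpha}$ argument work (the arg bound alone degenerates where $\phi$ or $\phi'$ vanishes), so they cannot be dropped from the closedness argument.
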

\begin{proof}
	We begin by showing that the sets $\mathcal{E}_\delta^{k+\alpha}$ exhaust $\mathcal{U}^{k+\beta}$ as $\delta\to0$. For any $(f,\ep)\in\mathcal{U}^{k+\alpha}$, the bounds on the first and the sixth term follow from $(f,\ep)\in\mathcal{U}_i^{k+\alpha}$ for $i=1$ and $3$ respectively. The second bound is immediate and the remaining ones follow from $(f,\ep)\in\mathcal{U}_2^{k+\alpha}$.
	
	Clearly, $\mathcal{E}_\delta^{k+\alpha}$ is a bounded set for any $\delta>0$. Moreover, $\mathcal{E}_\delta^{k+\alpha}\subset\mathcal{U}^{k+\alpha}$. Indeed, we have $\mathcal{E}_\delta^{k+\alpha}\subset\mathcal{U}_1^{k+\alpha}\cap\mathcal{U}_2^{k+\alpha}\cap\mathcal{U}_3^{k+\alpha}$.
	
	It remains to show that $\mathcal{E}_\delta^{k+\alpha}$ is a closed subset of $\mathcal{U}^{k+\alpha}$. The second, fourth, fifth, and sixth conditions of \eqref{inequalities} are clearly closed, as well as the third in combination with the fourth and the fifth. The third, fourth and fifth inequalities in \eqref{inequalities} imply that $\operatorname{dist}_{\mathcal{X}^{k+\alpha}}\big(\mathcal{E}_\delta^{k+\alpha},\partial\mathcal{U}_2^{k+\alpha} \big)>0.$ Hence, the closure of $\mathcal{E}_\delta^{k+\alpha}$ is contained in $\mathcal{U}^{k+\alpha}$. Moreover, since the mapping $(f,\ep)\to \ep A$ is analytic $\mathcal{U}_1^{k+\alpha}\cap\mathcal{U}_3^{k+\alpha}\to C^{k+\alpha}(\mathbb{T})$, the mapping $\min_\mathbb{T}(f,\ep)\to|\ep A|$ is continuous and so the first condition in \eqref{inequalities} is closed.
\end{proof}

\begin{lemma}[Compactness]\label{lem:compactness}
	For any $\delta>0$, the set $\mathscr S^{1+\alpha}\cap \mathcal{E}_\delta^{1+\alpha}$ is a compact subset of $C^{k}(\mathbb{T})\times\mathbb{R}$. Moreover, there exists a constant $C(k,\delta)>0$ so that any solution $(f,\ep)\in\mathcal{E}_\delta^{1+\alpha}\cap\mathscr S^{1+\alpha}$ satisfies $\|f\|_{C^k(\mathbb{T})}<C.$
\end{lemma}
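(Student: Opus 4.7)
The plan is to combine a Schauder-type bootstrap---which upgrades any solution in $\mathcal{E}_\delta^{1+\alpha} \cap \mathscr{S}^{1+\alpha}$ from $C^{1+\alpha}$ to $C^{k+\alpha}$ uniformly in the family---with a standard Arzel\`a--Ascoli argument and the closedness of $\mathcal{E}_\delta^{1+\alpha}$ established in Lemma \ref{lem:E}. The base of the bootstrap is already in place: the inequalities defining $\mathcal{E}_\delta^{1+\alpha}$ in \eqref{inequalities} directly supply both hypotheses \eqref{uniform_bounds_delta} of Lemma \ref{lem:uniform} with the same $\delta$, so one starts from the uniform bound $\|\phi\|_{C^{1+\alpha}(\mathbb{T})} \leq C_1(\delta, \ep)$.

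For the inductive step, I would assume $\|\phi\|_{C^{j+\alpha}} \leq C_j(\delta, \ep)$ for some $j \geq 1$. Then $\partial D_m$ is a uniformly bounded $C^{j+\alpha}$ curve, and an iterated version of Lemma \ref{lem:elliptic_estimates} (interior/boundary Schauder estimates applied to the elliptic problem \eqref{elliptic formulation}) gives $\|\partial_z \Psi\|_{C^{j+\alpha}(\overline{D})} \leq C_j'(\delta, \ep)$. Composing with $\phi$ yields $\partial_z \Psi \circ \phi \in C^{j+\alpha}(\mathbb{T})$, and the lower bound $|A| \geq \delta$ keeps $A/\overline{A}$ uniformly away from the negative real axis so that the $\arg$ term in the Riemann--Hilbert representation \eqref{phi'} is a well-defined $C^{j+\alpha}$ function (its continuous branch existing thanks to the winding number zero of $A$ from Lemma \ref{lem:winding_number}). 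Since the Cauchy singular integral and the exponential both preserve $C^{j+\alpha}$, equation \eqref{phi'} then yields $\|\phi'\|_{C^{j+\alpha}(\mathbb{T})} \leq C_{j+1}(\delta, \ep)$, i.e.\ $\|\phi\|_{C^{j+1+\alpha}} \leq C_{j+1}$. Iterating this finitely many times will give the claimed bound $\|f\|_{C^k(\mathbb{T})} \leq C(k, \delta, \ep)$, indeed with $C^{k+\alpha}$ in place of $C^k$.

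For the compactness statement, I would take a sequence $(f_n, \ep_n) \in \mathcal{E}_\delta^{1+\alpha} \cap \mathscr{S}^{1+\alpha}$. The last inequality in \eqref{inequalities} together with Lemma \ref{lem:ep_bounded} confines $\ep_n$ to a compact interval, so along a subsequence $\ep_n \to \ep^\star$. Applying the bootstrap with $k+1$ in place of $k$ produces a uniform $C^{k+1}$ bound on $f_n$, and Arzel\`a--Ascoli then extracts a further subsequence with $f_n \to f^\star$ in $C^k(\mathbb{T})$. The limit $(f^\star, \ep^\star)$ lies in $\mathcal{E}_\delta^{1+\alpha}$ because this set is closed (Lemma \ref{lem:E}), and it belongs to $\mathscr{S}^{1+\alpha}$ by continuity of $\mathscr F$ (Proposition \ref{prop-welldef}).

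The hardest part of the argument is the inductive step: one must verify that the constants produced by the elliptic and Cauchy-integral estimates depend only on the quantitative data $\delta$ and $\ep$ (and the lower-order norm $C_j$), rather than on the particular solution. This is precisely where the pointwise lower bounds $\min_\mathbb{T} |A| \geq \delta$, $\min_\mathbb{T} |\phi'| \geq \delta$, $\min_\mathbb{T} |\phi| \geq \delta$ and $\min|\ep\phi(\tau) + \ep\phi(w) + 2l| \geq \delta$ built into $\mathcal{E}_\delta^{1+\alpha}$ become essential: they prevent degeneration of the Riemann--Hilbert representation \eqref{phi'} and of the interaction term in $\tilde{\mathcal{C}}_{\ep, l}$ at every step of the bootstrap.
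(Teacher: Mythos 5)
Your proposal is correct in outline and shares the paper's core engine: both proofs bootstrap regularity through the explicit Riemann--Hilbert representation \eqref{phi'} (made available by Lemmas \ref{lem:winding_number} and \ref{lem:RH_problem}), and both obtain compactness from a uniform higher-order bound together with the compact embedding of H\"older spaces and the closedness statements of Lemma \ref{lem:E}. Where you diverge is in the inductive step, i.e.\ in how one shows that $A$ inherits the $C^{j+\alpha}$ regularity of $\phi$. You route through physical space: $\partial D\in C^{j+\alpha}$ should give $\partial_z\Psi\in C^{j+\alpha}(\overline D)$, hence $A=2\,\partial_z\Psi(\phi(w))\,w\in C^{j+\alpha}(\mathbb T)$. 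The paper instead stays on $\mathbb{T}$ and uses the fact that $\mathcal C(\phi)$ and $\tilde{\mathcal C}_{\ep,l}(\phi)$ are bounded (indeed analytic) on $C^{j+\alpha}(\mathbb T)$ whenever $(f,\ep)\in\mathcal U^{j+\alpha}$, so that $A\in C^{j+\alpha}$ follows directly from the definition \eqref{A}; the map $\mathscr G(f,\ep)$ is then continuous $\mathscr S^{j+\alpha}\to C^{j+\alpha}$ and the identity $\ep f'=\mathscr G(f,\ep)$ gains one derivative per iteration, yielding the continuous inclusion $\mathscr S^{1+\alpha}\hookrightarrow C^{k+\alpha}$.

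One step of yours needs more care than you give it. Lemma \ref{lem:elliptic_estimates} is \emph{not} iterable as stated: it produces a $C^\beta$ bound on $\partial_z\Psi$ for a fixed $\beta\in(0,1)$ from the $L^\infty$ norm of the right-hand side of \eqref{elliptic formulation}, and it never sees the regularity of $\partial D$. The higher-order statement you want --- that the gradient of the Newtonian potential of $\chi_D$ is $C^{j+\alpha}$ up to the boundary from each side when $\partial D\in C^{j+\alpha}$ --- is true, but it is a transmission-problem/layer-potential estimate (the source term $\frac{1}{\ep^2\pi}\chi_{D_1}+\frac{1}{\ep^2\pi}\chi_{D_2}-2\Omega$ jumps across $\partial D$, so standard interior or Dirichlet--Schauder theory does not apply directly), and you would need to supply it with constants depending only on $\delta$, $\ep$ and the lower-order bound $C_j$. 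This is exactly the technical burden the paper's Cauchy-integral formulation is designed to avoid. Your compactness argument and the identification of the limit as a solution via the continuity of $\mathscr F$ and the closedness of $\mathcal E_\delta^{1+\alpha}$ match the paper's reasoning.
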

\begin{proof}
	Recall that our problem can be viewed as the Riemann--Hilbert problem $\Img(A\phi')=0$. From Lemmas~\ref{lem:winding_number} and~\ref{lem:RH_problem}, this problem can be solved explicitly to get
	\begin{align*}
	\ep f'(w)&=\operatorname{exp}\bigg\{\frac{w}{2\pi}\int_{\T}\frac{1}{\tau-w}\bigg[\frac{1}{\tau}\operatorname{arg}\bigg(\frac{\ep A(\tau)}{\overline{\ep A(\tau)}}\bigg)-\frac{1}{w}\operatorname{arg}\bigg(\frac{\ep A(\tau)}{\overline{\ep A(\tau)}}\bigg) \bigg]d\tau \bigg\}-1\\&:=\mathscr G(f,\ep).
	\end{align*}
	
	We first show that $\mathscr G(f,\ep):\mathscr{S}^{k+\alpha}\to C^{k+\alpha}(\mathbb{T})$ is a continuous mapping. From the definition of $\mathcal{U}^{k+\alpha}$, clearly both $\ep A$ and $\ep A/\overline{\ep A}$ are continuous mappings $\mathcal{U}^{k+\alpha}\to C^{k+\alpha}$. The same is true for $\operatorname{arg}(\ep A/\overline{\ep A})$ since the winding number of $\ep A$ is $0$ by Lemma~\ref{lem:winding_number}. Finally, the Cauchy integral operator is a bounded linear operator from $C^{k+\alpha}(\mathbb{T})$ to itself. Composing with the exponential yields the desired result.
	
	This continuity result combined with an iteration argument then implies that
	\begin{equation}\label{continous inclusion}
	\mathscr S^{1+\alpha}\hookrightarrow C^{k+\alpha}\quad\text{is a continuous inclusion for all }k\in\mathbb{N}.
	\end{equation}
	
	We are now ready to show the compactness. Indeed, from Lemma~\ref{lem:E}, we know that $\mathcal{E}_\delta^{1+\alpha}$ is a bounded and closed subset of $\mathcal{U}^{1+\alpha}$ and hence is compact in $\mathcal{U}^{1+\alpha/2}$. Since $\mathscr S^{1+\alpha/2}\subset\mathcal{U}^{1+\alpha/2}$ is closed, we must have that $\mathcal{E}_\delta^{1+\alpha}\cap\mathscr S^{1+\alpha/2}$ is also a compact subset of $\mathscr S^{1+\alpha/2}$. The validity of \eqref{continous inclusion} proves the desired compactness. In particular, $\mathcal{E}_\delta^{1+\alpha}\cap\mathscr S^{1+\alpha/2}$ is bounded in $C^k(\mathbb{T})\times\mathbb{R}$ thus concluding the proof.
\end{proof}

From Lemmas~\ref{lem:fredholm} and~\ref{lem:compactness} along with Theorem \ref{thm: global bifurcation} and the results from Section~\ref{sect:local}, we obtain the following intermediate result.
\begin{theorem}\label{thm: global bifurcation_intermediate}
	There exists a continuous curve $\mathscr C$ of solutions that extends $\mathscr C_{\text{loc}}$, parameterized by $s\in(0,\infty)$, such that
	\begin{enumerate}[label=\rm(\alph*)]
		\item as $s\to\infty$
		\begin{equation*}\label{blow-up}
		\min\bigg\{\min_{w\in\partial D}\partial_r\tilde{\Psi},\;\min_{\T}|\ep\phi(\tau)+\ep\phi(w)+2l| \bigg\}\to 0;
		\end{equation*}
		where here $\phi(s)=w+\ep f(s)$;
		\item Near each point $(\ep(s_0),f(s_0))\in\mathscr C$, we can reparameterize $\mathscr C$ so that $s\mapsto (\ep(s),f(s))$ is real analytic;
		\item \label{thm:global_local} $(\ep(s),f(s))\notin\mathscr C_{\text{loc}}$ for $s$ sufficiently large.
	\end{enumerate} 
\end{theorem}
\begin{proof}
	We apply Theorem~\ref{thm: global bifurcation}. The existence of the local curve and the invertibility of the linearized operator along that curve is guaranteed by Theorem~\ref{thm:local}. By Lemma~\ref{lem:fredholm}, $\mathscr F$ satisfies the Fredholm index $0$ assumption, and by choosing $Q_j=\mathcal{E}_{1/j}^{3+\alpha}$, the compactness assumption is satisfied by Lemmas~\ref{lem:E} and~\ref{lem:compactness}. Moreover, with this choice of $Q_j$, the blow-up scenario in Theorem~\ref{thm: global bifurcation} becomes
	\begin{align*}
	\min\bigg\{	\min_\mathbb{T}\ep|A|,  \frac{1}{1+|\ep|+\|\phi\|_{C^{1+\alpha}}}&, \frac{\pi}{2}-\max_\mathbb{T}\bigg|\operatorname{arg}\frac{w\phi'}{\phi}\bigg|,\,\min_\mathbb T|\phi'|,\, \min_\mathbb T|\phi|,\,\min_{\T}|\ep\phi(\tau)+\ep\phi(w)+2l| \bigg\}\to 0.
	\end{align*}
	Note that the Hölder exponent has been reduced from $3+\alpha$ to $1+\alpha$ due to Lemma~\ref{lem:compactness}. By Lemmas~\ref{lem:Psi_r}--\ref{lem:ep_bounds}  we obtain \eqref{blow-up}.
\end{proof}

The following result tells us that the boundary of the patch $D$ is analytic. The proof follows a proof of Kinderlehrer, Nirenberg, and Spruck \cite{kns:freereg} for elliptic free boundary problems, which was already used in \cite{Hassainia-Hmidi-Masmoudi:2021} for the analyticity of a rotating single patch.

\begin{theorem}\label{thm:analytic}
	Assume that $D\in C^1$ and that $(\tilde{\Psi},\Omega,\ep)$ solves \eqref{elliptic formulation-centered}. If $\tilde{\Psi}\in C^2(\C\setminus D)\cup C^2(\overline{D})$, then $\partial D$ is analytic.
\end{theorem}
\begin{proof}
	The vorticity of the renormalized centered patch $D$ is $\frac{1}{\pi}$, and hence Theorem~\ref{thm:rigidity1} yields that $\ep^2\Omega\in\big(0,\frac{1}{2 \pi}\big)$. By choosing $\mathcal{R}^+=D$, $\mathcal{R}^-=\C\setminus D$ and $\Gamma=\partial D$, we get that $\tilde{\Psi}\in C^1(\mathcal{R}^+\cup\mathcal{R}^-\cup\Gamma)\cap C^2(\mathcal{R}^+\cup\Gamma)\cap C^2(\mathcal{R}^-\cup\Gamma)$ satisfies the following elliptic system:
	
	\begin{align*}
	F(z,\tilde{\Psi}, D\tilde{\Psi}, D^2\tilde{\Psi}):=\Delta \tilde{\Psi}+2\ep^2\Omega-\frac{1}{\pi}=0, \quad \textnormal{in }\mathcal{R}^+,\\
	G(z,\tilde{\Psi}, D\tilde{\Psi}, D^2\tilde{\Psi}):=\Delta \tilde{\Psi}+2\ep^2\Omega=0, \quad \textnormal{in }\mathcal{R}^-.
	\end{align*}
	In order to use \cite[Theorem 3.1']{kns:freereg}, it remains to prove that
	$$
	\frac{\partial\tilde{\Psi}}{\partial n}\neq 0, \quad\textnormal{on } \Gamma,
	$$
	which follows by the strong maximum principle, the Hopf lemma and Theorem~\ref{thm:rigidity1} since 
	$$
	\Delta \tilde{\Psi}=\frac{1}{\pi}-2\ep^2\Omega>0 \textnormal{ in } D \qquad\text{and}\qquad \tilde{\Psi}=c \quad \textnormal{on } \partial D.
	$$
	Applying \cite[Theorem 3.1']{kns:freereg} provides the analyticity of the boundary of $\Gamma=\partial D$.
\end{proof}

The last ingredient we need for our main result is to show that $\ep$ can only go to $0$ along the local curve of solutions. We begin with a corollary that follows from the rigidity result Theorem~\ref{thm:rigidity1} and Lemma~\ref{thm:rigidity2}.

\begin{corollary}\label{cor:bounds}
	With the notation of Lemma~\ref{thm:rigidity2}, assume in addition that $\phi\in C^{k+\alpha}(\mathbb{T})$, and $\ep_0$ is sufficiently small. Then 
	\begin{equation*}
	\|f\|_{C^{k+\alpha}(\mathbb{T})}\lesssim1,
	\end{equation*}
	where we have $\phi(w)=w+\ep f(w)$, as defined in \eqref{slov5}.
\end{corollary}
\begin{proof}
	Recall from \eqref{nonlinear-op} that we can express the nonlinear operator as
	\begin{align*}
	\mathscr F ^{k+\alpha}(f,\ep)&=\Img\{\mathscr A(f,\ep)(1+\ep f') \}\\&=\Img\{\mathscr A(f,\ep)\ep f'+\mathscr A(f,\ep)\},
	\end{align*}
	where we recall that $\phi(w)=w+\ep f(w)$. We now denote the principal part of this operator by
	\begin{equation*}
	\mathscr F_0^{k+\alpha}(f,\ep)=\Img\{\mathscr A(f,\ep)\ep f'\}.
	\end{equation*}
	We notice that by freezing the coefficient we obtain a Riemann--Hilbert type problem with $\ep\mathscr A$ acting on $f'$. We know from Section~\ref{sect:formulation} that we can rewrite $\ep\mathscr A$ as
	\begin{equation*}
	\ep \mathscr A(f,\ep)=\bigg(\mathscr J(f,\ep)-\Omega(\ep\overline{w+\ep f}+l)\bigg)\ep w+\frac{1}{2\pi}
	\end{equation*}
	with $\mathscr J$ is given in \eqref{J}. From the bounds on $\Omega$ and $f$ in Theorem~\ref{thm:rigidity2}, we see that for $\ep$ sufficiently small, $|\ep\mathscr A|\geq\tfrac{1}{4\pi}$. Classical elliptic theory (see for instance \cite{volpert:book}) now implies that
	\begin{equation*}
	\|f'\|_{C^{k-1+\alpha}(\mathbb{T})}\leq C\big( \|\mathscr F_0^{k+\alpha}(f,\ep)\|_{C^{k-1+\alpha}(\mathbb{T})}+\|f\|_{C^0(\mathbb{T})}\big)
	\end{equation*}
	for some constant $C$ independent of $f$, and hence, by interpolation, we get
	\begin{equation}\label{compactness}
	\|f'\|_{\mathcal{X}^{k-1+\alpha}}\leq C\big( \|\mathscr F^{k+\alpha}(f,\ep)\|_{\mathcal{Y}^{k+\alpha}}+\|f\|_{C^0(\mathbb{T})}\big).
	\end{equation}
	thus concluding the proof.
\end{proof}

\begin{lemma}\label{lemma:bound-ep}
	Assume that \eqref{uniform_bounds_delta} holds for some constant $\delta>0$. Then there exists some $\ep_1(\delta)$ such that $\ep_1(\delta)\leq\ep(s)$ for all $s\in(1,\infty)$ (away from the local curve). In other words,
	\begin{equation*}
		\liminf_{s\to\infty}\ep(s)>0.
	\end{equation*}
\end{lemma}
\begin{proof}
	From Lemma~\ref{thm:rigidity2} and Corollary~\ref{cor:bounds}, we know that there exists an $\ep_0(\delta)$ such that for all $\ep<\ep_0(\delta)$, we have $\|f\|_{C^{k+\alpha}}\lesssim1$ and that $|\Omega|$ is bounded. 
	
	We now argue by contradiction. Suppose that there exists a sequence $s_n\to\infty$ for which we have $\ep(s_n)\to0$. From \eqref{compactness} and compact embeddings of Hölder spaces, we can extract a subsequence so that $\big\{(f(s_n),\ep(s_n)\big\}$ converges in $\mathcal{X}$ to a solution $(f^*,\ep^*)$ of $\mathscr F(f,\ep)=0$ with $\ep^*=0.$ From Lemma~\ref{lem:epsilon0} and continuity of $f$, we must have $f^*=0$. Hence, $\|f(s_n)\|_{\mathcal{X}}\to0$. However, from the uniqueness of solutions in Theorem~\ref{thm:local}\ref{uniqueness} we must have $(f(s_n),\ep(s_n))\in\mathscr C_{\text{loc}}$ for $n$ sufficiently large. This contradicts Theorem~\ref{thm: global bifurcation_intermediate}\ref{thm:global_local}.
\end{proof}

We are now in a position to state our main result.

\begin{theorem}\label{thm:main_abstract}
	There exists a continuous curve $\mathscr C$ of solutions that extends $\mathscr C_{\text{loc}}$, parameterized by $s\in(0,\infty)$, such that
	\begin{enumerate}[label=\rm(\alph*)]
		\item as $s\to\infty$
		\begin{equation*}
		\min\bigg\{\min_{w\in\partial D}\partial_r\tilde{\Psi},\;\min_{\T}|\ep\phi(\tau)+\ep\phi(w)+2l| \bigg\}\to 0;
		\end{equation*}
		\item Provided the bounds \eqref{uniform_bounds_delta} hold for some $\delta>0$, there exists some $\ep_1(\delta)$ such that $\ep(s)\geq\ep_1$ for all $s$ away from the local curve.
		\item For each $s>0,$ the boundary $\partial D$ is analytic.
		\item Near each point $(f(s_0),\ep(s_0))\in\mathscr C$, we can reparameterize $\mathscr C$ so that $s\mapsto (\ep(s),f(s))$ is real analytic.
		\item $(f(s),\ep(s))\notin\mathscr C_{\text{loc}}$ for $s$ sufficiently large, where here $\phi(s)=w+\ep f(s)$.
	\end{enumerate} 
\end{theorem}
\begin{proof}
	The proof of the the theorem follows immediately from combining Theorem~\ref{thm:analytic} and Lemma~\ref{lemma:bound-ep} with Theorem~\ref{thm: global bifurcation_intermediate}.
\end{proof}

\section{Translating pairs}\label{sect:translating}
Till this point, we have focused on the global continuation problem for the rotating pairs \eqref{initial_vorticity}. We now remark that a similar analysis can be done for translating ones. In this case, the weak solutions to \eqref{euler} we seek satisfy the initial data
\begin{equation*}\label{initial_vorticity-t}
\omega_0(z):=\omega(0,z)=\frac{1}{\ep^2\pi}(\chi_{D_1}(z)-\chi_{-D_1}(z)),
\end{equation*}
for some simply-connected bounded domain $D_1$. In particular, the translating solutions we wish to find are of the form
\begin{equation}\label{ansatz-translating}
\omega(t,x)=\omega_0(z-iVt),
\end{equation}
for some constant $V\in \R$. The only possible translating pairs are given by $iV\in \R$ due to the assumed symmetry on $D_1$, such constrain appears in the existence of the local curve \cite{hm:pairs}. Inserting the ansatz \eqref{ansatz-translating} into \eqref{euler} yields the equation
\begin{equation*}\label{translating-eq}
(u_0(z)-iV)\cdot n_{\partial D_1}=0, \quad \textnormal{for all }z\in\partial D_1.
\end{equation*}
As in \eqref{conformal} we choose $D_1=\ep \Phi(\D)+l$, where $\phi:=\Phi\Big|_{\T}$ is such that
$$
\phi(w)=w+\ep f(w), \quad f(w)=\sum_{n\geq 1}a_n w^{-n},
$$
for $a_n\in\R$ and $w\in\T$. In that case, \eqref{ansatz-translating} can be written as
$$
\mathscr G(f,\ep)(w)=0,
$$
where
\begin{equation}\label{nonlinear-op-translating}
\mathscr G(f,\ep):=\Img\bigg\{\bigg(V(f,\ep)+\tfrac{1}{2\pi\ep}\mathcal{C}(w+\ep f)(\overline{w}+\ep\overline{f})-\tfrac{1}{2\pi \ep}\tilde{\mathcal{C}}_{\ep,l}(w+\ep f)(\overline{w}+\ep\overline{f}) \bigg)w(1+\ep f') \bigg\}.
\end{equation}
The speed $V$ is fixed and depends on $(f,\ep)$, analogously as was done for $\Omega$ in \eqref{omega}.

In \cite{hm:pairs}, the authors proved the existence of the local curve of translating pairs of solutions. We recall the result here, paired with the additional result that the linearized operator is invertible along the local curve. 
\begin{theorem}[Local curve of solutions]\label{thm:local_trans}
	There exists $\varepsilon_0>0$ and an analytic function $f(\varepsilon)$ such that $\mathscr G(f(\ep),\ep)=0$, for any $\varepsilon\in[0,\varepsilon_0)$. Moreover, the following properties hold: 
	\begin{enumerate}[label=\rm(\roman*)]
		\item \label{uniqueness_trans} \textup{(Uniqueness)} if $(f,\ep)\in\mathcal{X}$ are sufficiently small, then $\mathscr{G}(f,\ep)=0$ implies $f=f(\ep)$;
		
		\item \label{invertibility_trans} \textup{(Invertibility)} for all $0\leq\ep<\ep_0$, the linearized operator $\mathscr{G}_f(f(\ep),\ep):{\mathcal X}\rightarrow \mathcal Y$ is invertible. 
	\end{enumerate}
\end{theorem}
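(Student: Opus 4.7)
The plan is to mirror the proof of Theorem~\ref{thm:local}, replacing the corotating operator $\mathscr F$ by the translating operator $\mathscr G$ and the angular velocity $\Omega$ by the translating speed $V$. The main conceptual change is that the two patches now carry opposite circulations, which alters the sign of the interaction term but preserves the overall structure of the formulation.

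First, I would produce a singularity-free expression for $\mathscr G$ analogous to \eqref{formulation-without-epsilon}. Because $\int_{\partial(-D_1)}(z-\zeta)^{-1}d\zeta=0$ for $z\in\partial D_1$, one rewrites the Biot--Savart velocity as a sum of a self-interaction term and an interaction term, extracts the $\ep^{-1}$ singularity exactly as in Section~\ref{sect:formulation}, and obtains
\begin{equation*}
	\mathscr G(f,\ep)(w)=\frac{1}{2\pi}\Img[f'(w)]+\Img\Big[\big\{\widetilde{\mathscr J}(f,\ep)(w)-V(f,\ep)\big\}w\phi'(w)\Big],
\end{equation*}
where $\widetilde{\mathscr J}$ has the same structure as \eqref{J}, except that the sign of the last (interaction) integral is flipped to account for the opposite vorticity strength $\Gamma_2=-1$. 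Membership in $\mathcal U_4$ keeps the denominator of the interaction term bounded away from zero, so the analog of Proposition~\ref{prop-welldef} yields $\mathscr G\colon\mathcal U\times\R\to\mathcal Y$ real-analytic. One then introduces $V(f,\ep)$ as the unique number forcing the first Fourier coefficient of the full expression to vanish, in complete parallel with \eqref{omega}, and verifies $V(f,0)=V_0=-i/(4\pi l)$ as in Proposition~\ref{prop-omega}.

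Next, I would check the trivial solution $\mathscr G(0,0)=0$ by computing $\widetilde{\mathscr J}(0,0)(w)$ via the Residue theorem; the sign flip in the interaction term combined with $V_0=-i/(4\pi l)$ produces an exact cancellation, just as $\Omega_0$ cancelled the corresponding quantity in Proposition~\ref{pro-trivial}. The Fr\'echet derivative at the trivial solution is computed the same way as in Proposition~\ref{prop-dfF}: since $V(f,0)=V_0$ for all $f$, we have $\partial_f V(0,0)=0$, and the two integrals in $\partial_f\widetilde{\mathscr J}(0,0)h$ that survive the limit $\ep\to 0$ vanish by the Residue theorem, leaving
\begin{equation*}
	\partial_f\mathscr G(0,0)h=\frac{1}{2\pi}\Img[h'(w)].
\end{equation*}
A direct Fourier computation shows this map is an isomorphism $\mathcal X\to\mathcal Y$, since the definition of $\mathcal Y$ forces the first Fourier coefficient of the image to vanish. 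The analytic implicit function theorem then provides the local curve $\ep\mapsto f(\ep)$ together with the uniqueness statement~\ref{uniqueness_trans}.

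For the invertibility statement~\ref{invertibility_trans}, I would reproduce the perturbative argument at the end of the proof of Theorem~\ref{thm:local}: expanding $\partial_f\mathscr G(f,\ep)h$ in powers of $\ep$ using Taylor expansions of $\phi(w)-\phi(\xi)$ around $\ep=0$, together with continuity of $\partial_f V$, one obtains
\begin{equation*}
	\partial_f\mathscr G(f,\ep)=\partial_f\mathscr G(0,0)+\ep\,\widetilde{\mathscr R}(f,\ep)
\end{equation*}
with $\widetilde{\mathscr R}$ continuous, so invertibility persists on the local curve for $\ep<\ep_0$ after shrinking $\ep_0$ if necessary. The main obstacle is bookkeeping: tracking the opposite sign of the interaction integral through every step of the derivation of $\widetilde{\mathscr J}$, the formula for $V$, and the expansion of $\partial_f\mathscr G$, and confirming that Proposition~\ref{prop-omega}'s analog is clean enough that $\partial_f V(0,0)=0$. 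Once this is verified, the argument is structurally identical to the corotating case.
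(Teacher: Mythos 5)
Your proposal is correct and follows essentially the same route as the paper: the paper's own ``proof'' is just the observation that, in the singularity-free form $\mathscr G(f,\ep)=\tfrac{1}{2\pi}\Img[f'] + \Img[\{\mathscr J+V\}w\phi']$, everything carries over verbatim from the corotating argument (Theorem~\ref{thm:local}), and is in fact slightly simpler because the constant $V(f,\ep)$ replaces the $w$-dependent factor $\Omega(f,\ep)(\ep\overline{\phi(w)}+l)$. You spell out the details the paper leaves implicit (the sign flip in the interaction integral, the normalization of $V$, and the expansion $\partial_f\mathscr G(f,\ep)=\partial_f\mathscr G(0,0)+\ep\widetilde{\mathscr R}$), but the underlying argument is the same.
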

Proving \ref{invertibility_trans} is simpler here than for corotating pairs since we can rewrite \eqref{nonlinear-op-translating} as 
\begin{align*}
\mathscr G(f,\ep)(w)=&\frac{1}{2\pi}\textnormal{Im}\left[f'(w)\right]+\textnormal{Im}\left[\left\{\mathscr J(f,\ep)(w)+V(f,\ep)\right\}w\phi'(w)\right].\label{G-without-epsilon}
\end{align*}
The rest of the proof then follows exactly as for corotating pairs.

The continuation of the curve now follows completely identically to that for the corotating vortex patches. We remark that for corotating pairs, we needed control on the angular velocity $\Omega$ in order to prove that the boundary of each patch is analytic. Although the rigidity result for corotating pairs does not hold for translating pairs, we also do not need it for the analyticity proof to carry through. We obtain the following result.

\begin{theorem}\label{thm:main-trans}
	There exists a continuous curve $\mathscr C$ of translating vortex patch solutions to \eqref{elliptic formulation}, parameterized by $s\in(0,\infty)$. Moreover, the following properties hold along $\mathscr C$:
	\begin{enumerate}[label=\rm(\roman*)]
		\item \textup{(Bifurcation from point vortex)} The solution at $s=0$ is a pair of points $z_1,z_2$ lying on the horizontal axis at a distance $l$ from each other, translating with constant speed $V_0=\frac{1}{4\pi l}$.
		\item \textup{(Limiting configurations)}\label{thm:alternatives-trans} As $s\to\infty$
		\begin{equation*}\label{thm:physcial min-trans}
		\min\bigg\{\min_{z\in\partial D_1}\ep \nabla\Psi(z)\cdot\bigg(\frac{z-l}{|z-l|}\bigg), \min_{z_m\in\partial D_m}|z_1-z_2| \bigg\}\to0
		\end{equation*}
		\item \textup{(Analyticity)} For each $s>0,$ the boundary $\partial D_m$ is analytic.
		\item \textup{(Graphical boundary)} For each $s > 0$, the boundary of the patch can be expressed as a polar graph.
	\end{enumerate}
\end{theorem}

\appendix
\addtocontents{toc}{\protect\setcounter{tocdepth}{1}}
\section{Bifurcation theorems}\label{app:bif_thms}
First, let us recall the analytic version of the implicit function theorem which is essential in our construction of the local curve of solutions.
\begin{theorem}[Analytic version of Implicit Function theorem \cite{kielhofer}]\label{thm:implicit_function}
	Let $F:U\times V\rightarrow V$ be an analytic function, where $U\subset X$, $V\subset Y$, and where $X,Y,Z$ are real Banach spaces. Let $F(x,y)=0$ have a solution $(x_0,y_0)\in U\times V$ such that
	$$
	\partial_x F(x_0,y_0):X\rightarrow Z,
	$$
	is bounded with a bounded inverse. Then there is a neighborhood $U_1\times V_1$ in $U\times V$ of $(x_0,y_0$ and a mapping $g:V_1\rightarrow U_1\subset X$ such that
	\begin{align*}
	g(y_0)=&x_0,\\
	F(g(y),y)=&0, \textnormal{ for all } y\in V_1.
	\end{align*}
	Moreover, $f$ is analytic on $V_1$. Finally, every solution of $F(x,y)=0$ in $U_1\times V_1$ is of the form $(g(y),y)$.
\end{theorem}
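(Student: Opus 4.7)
The plan is to first obtain $g$ as a continuous implicit function via the Banach fixed-point theorem (this handles parts of the conclusion involving existence, uniqueness, and continuity), and then upgrade its regularity to real-analytic by complexifying and running the same argument holomorphically.

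First I would reformulate the equation $F(x,y)=0$ as a fixed-point problem. Set $L:=\partial_x F(x_0,y_0)$, which is an isomorphism $X\to Z$ by hypothesis, and define
\begin{equation*}
T(x,y):=x-L^{-1}F(x,y),
\end{equation*}
so that zeros of $F(\cdot,y)$ coincide with fixed points of $T(\cdot,y)$. Since $F$ is analytic (hence $C^1$), and $\partial_x T(x_0,y_0)=I-L^{-1}\partial_x F(x_0,y_0)=0$, continuity yields radii $r,\rho>0$ such that $\|\partial_x T\|\le \tfrac12$ and $\|T(x_0,y)-x_0\|\le r/2$ on $\overline{B}_r(x_0)\times\overline{B}_\rho(y_0)$. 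Setting $U_1:=B_r(x_0)$ and $V_1:=B_\rho(y_0)$, the map $T(\cdot,y)$ sends $\overline{U_1}$ into itself as a contraction of ratio $\tfrac12$, so Banach's theorem produces a unique fixed point $g(y)\in \overline{U_1}$. The uniform-contraction principle gives continuity, and a standard linearization argument (differentiating $F(g(y),y)=0$) gives $\partial_y g(y)=-[\partial_x F(g(y),y)]^{-1}\partial_y F(g(y),y)$, hence $C^1$ regularity. Uniqueness within $U_1\times V_1$ is immediate: any zero of $F$ in this set is a fixed point of $T(\cdot,y)$ and therefore equals $g(y)$.

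The main obstacle is analyticity, and the cleanest route is complexification. Because $F$ is real-analytic on the open set $U\times V$, it extends to a holomorphic map $F^{\mathbb C}$ on an open neighborhood of $(x_0,y_0)$ inside the complexified Banach spaces $X^{\mathbb C}\times Y^{\mathbb C}$, with values in $Z^{\mathbb C}$. The derivative $\partial_x F^{\mathbb C}(x_0,y_0)$ is the complex-linear extension of $L$ and is again a Banach-space isomorphism. Rerunning the contraction argument verbatim in this complex setting produces a fixed point $g^{\mathbb C}(y)$ for $y$ in a polydisc neighborhood $V_1^{\mathbb C}\subset Y^{\mathbb C}$ of $y_0$. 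Holomorphic dependence of $g^{\mathbb C}$ on $y$ follows because the Picard iterates $g_n^{\mathbb C}(y):=T^{\mathbb C}(g_{n-1}^{\mathbb C}(y),y)$ are holomorphic (composition of holomorphic maps) and converge uniformly on $V_1^{\mathbb C}$, so the limit is holomorphic by the Banach-valued Morera/Weierstrass theorem. Restricting to real $y\in V_1$, uniqueness forces $g^{\mathbb C}|_{V_1}=g$, so $g$ is real-analytic. The hard part is thus purely the analyticity upgrade; once the complexification framework is in place it reduces to the observation that a uniformly convergent sequence of holomorphic Banach-valued maps has a holomorphic limit.
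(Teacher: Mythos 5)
The paper does not actually prove this statement: it is quoted as a cited result from Kielh\"ofer's book (Appendix A.1), so there is no internal proof to compare against. Your argument --- reformulating $F(x,y)=0$ as the fixed-point problem for $T(x,y)=x-L^{-1}F(x,y)$, running the uniform contraction principle, and then upgrading to analyticity by complexifying $F$ and observing that the Picard iterates are holomorphic and converge uniformly, so the limit is holomorphic and restricts to $g$ on real arguments --- is correct and is essentially the classical proof found in the cited reference (modulo the standard facts you invoke: holomorphic extension of real-analytic Banach-space maps and the Weierstrass-type theorem for uniform limits of Banach-valued holomorphic maps). The only cosmetic point is to take the bound $\|T(x_0,y)-x_0\|<r/2$ strictly (shrinking $V_1$ if needed) so that the fixed point lands in the open ball $U_1$ rather than its closure, as the statement requires $g:V_1\to U_1$.
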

Second, we state the abstract analytic global bifurcation theorem that we will be using. This is an adaptation of the global bifurcation theorem due to Dancer \cite{dancer:global}, later improved by Buffoni and Toland \cite{bt:analytic}. A typical global bifurcation argument would go as follows: one constructs a local curve of solutions by bifurcating from the curve of trivial solutions by means of the Crandall--Rabinowitz theorem. Using a global bifurcation argument one then extends this curve to a global one along which two alternatives can occur: we either get a blow-up scenario or the curve loops back to original bifurcation point. 

Our situation is slightly different here. The formulation of the problem has a singularity at the bifurcation point $(0,0)$. Hence, we cannot continue the curve from that point. Instead, inspired by the global bifurcation theorem developed by Chen, Walsh and Wheeler in \cite{strat} to handle steady solitary water waves, we choose to extend the curve in a suitable open subspace for which the bifurcation parameter at the trivial solution lies on the boundary. This comes at the expense of having an extra alternative that the global curve doesn't connect with the initial bifurcation point as $\ep\to0$. Moreover, since the local curve was not constructed by means of the Crandall--Rabinowitz local bifurcation theorem, we additionally require that the linearized operator be invertible along it. This enables us to glue the local curve together with the global continuation. We remark that as a result of not starting the global curve at the bifurcation point, we cannot have a loop alternative: indeed, the bifurcation point does not lie in the open subspace and the global curve cannot reconnect with the local one due to both uniqueness of solutions, and a lose of analyticity.

\begin{theorem}\label{thm: global bifurcation}
	Let $\mathscr{X}$ and $\mathscr{Y}$ be Banach spaces, $\mathscr{U}$ be an open subset of $\mathscr{X}\times\mathbb{R}$ with $(0,0)\in\partial\mathscr{U}$. Consider a real-analytic mapping $\mathcal{F}\colon\mathscr{U}\to\mathscr{Y}$. Suppose that
	\begin{enumerate}[label=\rm(\Roman*)]
		\item there exists a continuous curve $\mathscr{C}_\textup{loc}$ of solutions to $\mathcal{F}(\mu,x)=0,$ parameterized as 
		\begin{equation*}
		\mathscr{C}_{\textup{loc}}:=\{(\mu,\tilde{x}(\mu)):0<\mu<\mu_* \}\subset\mathcal{F}^{-1}(0),
		\end{equation*}
		for some $\mu_*>0$ and continuous $\tilde{x}$ with values in $\mathscr{X}$ and $\lim_{\mu\searrow0}\tilde{x}(\mu)=0$;
		\item \label{invertible} the linearized operator $\mathcal{F}_x(\mu,\tilde{x}(\mu))\colon\mathscr{X}\to\mathscr{Y}$ is invertible for all $\mu$;
		\item \label{semifredholm} for any $(\mu,x)\in\mathscr{U}$ with $\mathcal{F}(\mu,x)=0$ the Fréchet derivative $\mathcal{F}_x(\mu,x)\colon\mathscr{X}\to\mathscr{Y}$ is Fredholm with index $0$;
		\item for some sequence $(Q_j)_{j=1}^\infty$ of bounded closed subsets of $\mathscr U$ with $\mathscr U=\cup_j Q_j$, the set $\{(\mu,x)\in\mathscr U:\mathcal{F}(\mu,x)=0\}\cap Q_j$ is compact for each $j$. 
	\end{enumerate}
	Then $\mathscr{C}_\text{loc}$ is contained is a curve of solutions $\mathscr{C}$,  parameterized as
	\begin{equation*}
	\mathscr{C}:=\{(\mu(s),x(s)):0<s<\infty \}\subset\mathcal{F}^{-1}(0)
	\end{equation*}
	for some continuous $(0,\infty)\ni s\mapsto(x(s),\mu(s) )\in\mathscr{U},$ with the following properties.
	\begin{enumerate}[label=\rm(\alph*)]
		\item for every $j\in\mathbb{N}$ there exists $s_j>0$ such that $(\mu(s),x(s))\notin Q_j$ for $s>s_j$.
		
		\item  Near each point $(\mu(s_0),x(s_0))\in\mathscr{C}$, we can reparameterize $\mathscr{C}$ so that $s\mapsto(\mu(s),x(s))$ is real analytic.
		
		\item $(\mu(s),x(s))\notin\mathscr{C}_{\text{loc}}$ for $s$ sufficiently large.
	\end{enumerate}
\end{theorem}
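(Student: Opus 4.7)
The plan is to adapt the analytic global bifurcation theorem of Dancer \cite{dancer:global} and Buffoni--Toland \cite{bt:analytic} to our setting, where the initial ``bifurcation point'' $(0,0)$ lies on $\partial\mathscr{U}$ rather than in $\mathscr{U}$. A key simplification compared with the classical theorem is that we start from a curve along which the linearized operator is already invertible by hypothesis (II), so no initial Crandall--Rabinowitz reduction is needed at the start of the curve; the main extra work, compared with the standard statement, is to rule out the closed-loop alternative using the boundary condition at $(0,0)$.

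First, by hypothesis (II) and the analytic implicit function theorem (Theorem~\ref{thm:implicit_function}), near every point of $\mathscr{C}_{\text{loc}}$ the zero set of $\mathcal{F}$ is locally a unique analytic graph over $\mu$, so $\mathscr{C}_{\text{loc}}$ extends as an analytic arc for as long as $\mathcal{F}_x$ remains invertible. At any point $(\mu_*,x_*)$ where invertibility fails, hypothesis (III) still gives that $\mathcal{F}_x(\mu_*,x_*)$ is Fredholm of index $0$, so we may invoke an analytic Lyapunov--Schmidt reduction: writing $\mathscr{X}=\ker\mathcal{F}_x(\mu_*,x_*)\oplus\mathscr{X}_1$ and $\mathscr{Y}=\operatorname{range}\mathcal{F}_x(\mu_*,x_*)\oplus\mathscr{Y}_1$ with $\dim\mathscr{Y}_1=\dim\ker\mathcal{F}_x(\mu_*,x_*)=n<\infty$, the equation $\mathcal{F}=0$ reduces locally to a finite system of $n$ real-analytic equations in $n+1$ real variables. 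Invoking the analytic structure theorem of \cite{bt:analytic} (Weierstrass preparation plus Puiseux-type expansions) then shows the zero set near $(\mu_*,x_*)$ is a finite union of analytic arcs meeting at that point; one of these is the incoming arc, and we select a definite continuation by the standard one-sided selection principle.

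Next, we iteratively glue such analytic arcs to produce a continuous global curve. To guarantee that the construction can be continued past any finite parameter value, we argue by contradiction using hypothesis (IV): if the maximal curve were defined only on some bounded interval $(0,S)$ and were to remain inside some $Q_j$ as $s\nearrow S$, then compactness of the zero set inside $Q_j$ would produce a limit point in $\mathscr{U}$, and the two steps above would extend the curve past this limit, contradicting maximality. Hence the curve extends to all $s\in(0,\infty)$, and for every fixed $j$ it must eventually leave $Q_j$, yielding conclusion (a). Conclusion (b), real-analytic reparameterization near each point, is immediate from the construction.

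It remains to rule out that the extended curve is a topological loop in $\mathscr{U}$, which is the second main alternative in \cite{bt:analytic} and which would also contradict (c). Since $\tilde{x}(\mu)\to 0$ as $\mu\searrow 0$ and $(0,0)\in\partial\mathscr{U}$, the closure of $\mathscr{C}_{\text{loc}}$ in $\mathscr{X}\times\mathbb{R}$ is not contained in $\mathscr{U}$. Local uniqueness on $\mathscr{C}_{\text{loc}}$, inherited from (II), forces any reconnection of the global curve with $\mathscr{C}_{\text{loc}}$ to make the two curves coincide on an open subinterval; a closed loop would therefore have to contain all of $\mathscr{C}_{\text{loc}}$ in its compact image, contradicting the non-compactness just noted. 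This simultaneously gives (c). The main obstacle is the Lyapunov--Schmidt analysis at the singular points of Step~2 together with the consistent gluing across them: one must verify that the one-sided selection at each singular point is compatible with the analytic arc coming out of hypothesis (II) at the start, and that the resulting concatenation of countably many analytic arcs is genuinely a continuous curve on $(0,\infty)$.
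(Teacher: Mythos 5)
Your proposal follows essentially the same route as the paper's proof, which likewise defers the core machinery to Buffoni--Toland: one starts from the distinguished arc through a point of $\mathscr{C}_{\text{loc}}$ (available by the analytic implicit function theorem thanks to hypothesis (II)), continues past degenerate points using the analytic structure of the zero set (Lyapunov--Schmidt plus the Buffoni--Toland structure theorem, possible by the Fredholm hypothesis (III)), handles the fact that $(0,0)\in\partial\mathscr{U}$ in the spirit of the solitary-wave argument of Chen--Walsh--Wheeler, and excludes reconnection with $\mathscr{C}_{\text{loc}}$ by uniqueness along the local curve together with the fact that the closure of $\mathscr{C}_{\text{loc}}$ contains $(0,0)\notin\mathscr{U}$. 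All of these ingredients appear in your sketch.

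The one place where your argument under-delivers is conclusion (a). Establishing that the maximal curve can be continued past every finite parameter value, and that it ``eventually leaves'' each $Q_j$, is strictly weaker than the assertion $(\mu(s),x(s))\notin Q_j$ for all $s>s_j$: a curve defined for all $s\in(0,\infty)$ could a priori re-enter $Q_j$ infinitely often (and since the parameterization is arbitrary, ``defined for all $s$'' carries no content by itself). In the Buffoni--Toland framework this is precisely what hypothesis (IV) combined with the local analytic structure of the zero set supplies: the solution set inside each $Q_j$ is compact and near every point is a finite union of analytic arcs, so the curve meets $Q_j$ in finitely many maximal sub-arcs; equivalently, the global curve is either injective or a closed loop. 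Your boundary-point argument rules out the loop, and injectivity then yields both (a) and (c) (the latter because every point of $\mathscr{C}_{\text{loc}}$ is traversed at some finite parameter value). So the proof is sound provided you explicitly invoke this finiteness/injectivity-or-loop dichotomy from the Buffoni--Toland theorem rather than deducing (a) from unbounded continuation alone; as written, that inference does not follow.
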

\begin{proof}
	The proof is almost completely identical to the global bifurcation theorem of Buffoni--Toland (see \cite{bt:analytic}). The only part we need to deal with the fact that the bifurcation point does not lie in the subspace $\mathscr U$ we are working in. To this end, we follow the ideas used in \cite[Theorem 6.1]{strat}. We provide a brief sketch. As in \cite{strat}, since \ref{invertible} holds we can construct the distinguished arc $A_0$, the connected component of
	\begin{equation*}
	\mathcal{A}:=\big\{(\mu,x)\in\mathscr{U}:\mathcal{F}(\mu,x)=0,\; \mathcal{F}_x(\mu,x)\text{ is invertible }  \big\}
	\end{equation*}
	in which $(\mu_{1/2},x_{1/2}):=(\mu_*/2,x(\mu_*/2))$ lies. The analytic implicit function theorem guarantees that all distinguished arcs are graphs. After possibly re-parameterizing, we write $A_0$ as
	\begin{equation*}
	A_0=\{(\mu(s),x(s)):0<s<1 \},
	\end{equation*}
	where $\mu(s)$ is increasing. From the implicit function theorem, the local curve of solutions $\mathscr{C}_\text{loc}$ lies entirely in $A_0$. Arguing as in the proof of \cite[Theorem 6.1]{strat}, the starting point of $A_0$ is
	\begin{equation*}
	\lim_{s\searrow0}(\mu(s),x(s))=(0,0).
	\end{equation*}
	The next step is to consider the limit $s\nearrow1$. This now follows exactly as in \cite{bt:analytic}.
\end{proof}
\section{Cited results}
\subsection{Results for Riemann--Hilbert problems}
We will often treat the problem as a Riemann--Hilbert problem. The following result will be useful. 
\begin{lemma}[Linear Riemann--Hilbert problems]\label{lem:RH_problem}
	Suppose $a\in C^{k-1+\alpha}(\mathbb{T},\mathbb{C})$ has winding number $0$ in that
	\begin{equation*}
	|a|>0\qquad\text{and}\qquad\operatorname{arg}a(e^{it})\bigg|_{t=0}^{t=2\pi}=0
	\end{equation*}
	and also that $a$ has the symmetry property $a(\overline{w})=\overline{a(w)}$. Then:
	\begin{enumerate}[label=\rm(\alph*)]
		\item The problem
		\begin{equation*}
		\Img\{ag'\}=0\quad\text{on }\mathbb{T},\qquad (g-w)/\ep\in\mathcal{X}^{k+\alpha},
		\end{equation*}
		has a unique solution $g=g_0,$ whose derivative is given explicitly by
		\begin{equation*}
		g_0'(w)=\operatorname{exp}\bigg\{\frac{w}{2\pi}\int_{\T}\frac{\tau^{-1}\theta(\tau)-w^{-1}\theta(w)}{\tau-w}d\tau \bigg\},
		\end{equation*}
		where here
		\begin{equation*}
		\theta(w)=\operatorname{arg}\frac{a(w)}{\overline{a(w)}},
		\end{equation*}
		and the branch of the $\operatorname{arg}$ function is fixed by requiring $\theta(1)=0$.
		\item The operator
		\begin{equation*}
		L\colon \mathcal{X}^{k+\alpha}\to\mathcal{Y}^{k-1+\alpha},\qquad g\mapsto\Img\{ag'\}
		\end{equation*}
		is well-defined and invertible, with inverse operator characterized by
		\begin{equation*}
		\frac{d}{dw}L^{-1}h(w)=-\frac{wg_0'(w)}{\pi}\int_{\T}\frac{1}{\tau-w}\bigg(\frac{h(\tau)}{a(\tau)g_0'(\tau)\tau}-\frac{h(w)}{a(w)g_0'(w)w} \bigg)d\tau.
		\end{equation*}
	\end{enumerate}
\end{lemma}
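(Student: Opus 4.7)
The plan is to treat $\Img(ag')=0$ as a scalar Riemann--Hilbert problem on $\mathbb{T}$ and to exploit the winding-number-zero hypothesis to reduce the multiplicative problem to an additive scalar boundary-value problem for $\log g'$, which can be solved explicitly via a Schwarz/Cauchy-type integral formula.

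For existence in (a), I would first use $|a|>0$ together with the winding-number-zero assumption to pick a continuous branch of $\theta(w)=\arg(a/\bar a)\in C^{k-1+\alpha}(\mathbb{T},\R)$ with $\theta(1)=0$. Any candidate $g$ with $(g-w)/\varepsilon\in\mathcal{X}^{k+\alpha}$ extends holomorphically to $\{|w|>1\}$ with $g'(\infty)=1$, and, seeking a zero-free $g'$, the equation $\Img(ag')=0$ is equivalent to $\arg g'=-\theta/2$ on $\mathbb{T}$. I would therefore look for $F:=\log g'$ holomorphic on $\{|w|>1\}$ with $F(\infty)=0$ and $\Img F=-\theta/2$ on $\mathbb{T}$. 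This is a Schwarz boundary-value problem in the exterior domain whose unique solution is produced by a Cauchy-type integral; the subtraction $\tau^{-1}\theta(\tau)-w^{-1}\theta(w)$ in the proposed formula is precisely the standard regularization that replaces the singular principal-value Cauchy kernel with a convergent Hölder-continuous integrand. Exponentiating gives the claimed expression for $g_0'$, and $g_0'\in C^{k-1+\alpha}$ follows from the Plemelj--Privalov theorem on Hölder continuity of Cauchy-type integrals.

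For uniqueness, if $g_1,g_2$ are two solutions then $h:=g_1'/g_2'$ is holomorphic on $\{|w|>1\}$, tends to $1$ at infinity, and is real on $\mathbb{T}$ (since both $ag_j'$ are real there). Schwarz reflection $w\mapsto\overline{h(1/\bar w)}$ then extends $h$ to a bounded holomorphic function on $\hat{\C}$, hence a constant, which must equal $1$; and $g(w)-w\to0$ at infinity fixes the integration constant, giving $g_1=g_2$. Part (b) is handled along the same lines: the symmetry $a(\bar w)=\overline{a(w)}$ together with the real-Fourier-coefficient condition in $\mathcal{X}^{k+\alpha}$ guarantees that $\Img(ag')$ has the odd-in-angle symmetry and vanishing first coefficient of $\mathcal{Y}^{k-1+\alpha}$, proving well-definedness. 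To invert, substitute $g'=g_0'\psi$; since $ag_0'$ is real on $\mathbb{T}$, the equation $\Img(ag')=h$ becomes the linear Schwarz problem $\Img\psi=h/(ag_0')$ for $\psi$ holomorphic on $\{|w|>1\}$ with $\psi(\infty)=1$. Solving by the same regularized Cauchy integral and differentiating yields exactly the stated formula for $(d/dw)L^{-1}h$.

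The main obstacle is not conceptual but technical: I expect the work will lie in verifying that the regularized Cauchy kernel $(\tau-w)^{-1}(\tau^{-1}\theta(\tau)-w^{-1}\theta(w))$ and its analogue in (b) define bounded operators on $C^{k-1+\alpha}(\mathbb{T})$ with the claimed boundary behavior, and in checking by hand that the Fourier-support and reality conditions defining $\mathcal{X}^{k+\alpha}$ and $\mathcal{Y}^{k-1+\alpha}$ are genuinely preserved under the construction (so that the inversion takes place within these specific subspaces rather than in larger Hölder spaces). Both reduce to careful tracking of the reflection symmetry $w\leftrightarrow\bar w$ through the Schwarz integrals.
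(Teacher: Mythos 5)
You should first note that the paper does not actually prove this lemma: it sits in the appendix section ``Cited results'' and is imported from \cite{hmw:global}, so there is no in-text proof to compare against. Your overall strategy --- use the winding-number-zero hypothesis to reduce $\Img\{ag'\}=0$ to a Schwarz problem for $\log g'$ in $\{|w|>1\}$, solve it by a regularized Cauchy integral, and get uniqueness by reflection --- is the standard canonical-function argument and is indeed how the lemma is proved in \cite{hmw:global}. Two of your steps deserve more care but are fixable: (i) in the uniqueness argument you should divide an arbitrary solution by $g_0'$ (which is zero-free, being an exponential) rather than by the derivative of a second arbitrary solution, which could vanish; (ii) solving the exterior Schwarz problem with $F(\infty)=0$ requires $\int_0^{2\pi}\theta(e^{is})\,ds=0$, since the value of a bounded harmonic function at infinity is the mean of its boundary values --- this is exactly where the symmetry $a(\overline w)=\overline{a(w)}$, hence the oddness $\theta(e^{-is})=-\theta(e^{is})$, must be invoked explicitly.

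There is, however, one genuine gap: your assertion that the symmetry hypothesis ``guarantees that $\Img(ag')$ has \dots vanishing first coefficient'' is false, and this is precisely the delicate point of the lemma. Symmetry gives only oddness of $\Img(ag')$, not the absence of the $\sin\theta$ mode. Concretely, $a(w)=2+w$ satisfies every stated hypothesis ($|a|\ge 1$, winding number $0$, $a(\overline w)=\overline{a(w)}$), yet $g=w^{-1}\in\mathcal{X}^{k+\alpha}$ gives $\Img(ag')=\sin\theta+2\sin 2\theta\notin\mathcal{Y}^{k-1+\alpha}$, so $L$ does not map $\mathcal{X}^{k+\alpha}$ into $\mathcal{Y}^{k-1+\alpha}$ for this $a$. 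The same defect surfaces in part (a): the exponent in the formula for $g_0'$ has $w^{-1}$-Laurent coefficient proportional to $\int_0^{2\pi}\theta(e^{is})\sin s\,ds$, which need not vanish, and when it does not, $g_0'$ has a nonzero residue at infinity and cannot be integrated to an element of $w+\ep\,\mathcal{X}^{k+\alpha}$. The missing ingredient is the vanishing of the \emph{first} Fourier mode of $\theta$ (equivalently of $\arg a$). In \cite{hmw:global} this is supplied by the $m$-fold symmetry of $a$ with $m\ge 2$; in the present paper it must instead be extracted from the structure of the coefficient along solutions, namely $A=\lambda w\phi'$ with $\lambda$ real (Lemma \ref{lem:winding_number}), for which $\log\phi'$ has no $w^{-1}$ mode because $f'=O(w^{-2})$. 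Your proof needs either this as an explicit additional hypothesis or a verification of it for the specific $a$ to which the lemma is applied; as written, the step would fail.
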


In order to use the above lemma, we need to ensure that the winding number of $\ep A$ is indeed $0$.

\begin{lemma}[Winding number $0$]\label{lem:winding_number}
	Suppose that $(\phi,\ep)$ is a solution to \eqref{A_formulation} and that \eqref{uniform_bounds_delta} holds for some $\delta>0$. Then $\ep A$, where $A$ is defined as in \eqref{A} has winding number $0$ in the sense that
	\begin{equation}\label{winding number 0}
	\ep A\neq0,\qquad\operatorname{arg}\ep A(e^{it})\bigg|_{t=0}^{t=2\pi}=0,
	\end{equation}
	for some continuous branch of $\operatorname{arg}$.
\end{lemma}
\begin{proof}
	From \eqref{A_formulation} we know that $\Img(A\phi')=0$ and from \eqref{uniform_bounds_delta}, we know that $\ep A\neq0$ and $\phi'\neq0$. This ensures that $\ep A$ can be expressed in terms of $\lambda\phi'$ where $\lambda$ is some real-valued, non-vanishing function in $C^{k-1+\alpha}(\mathbb{T})$. As a result, it suffices to show that $\phi'$ has winding number $0$ in the sense of \eqref{winding number 0}. Recall the holomorphic extension $\Phi$ of $\phi$ to $\mathbb{C}\setminus\mathbb{D}$. Then $f\in C^{k+\alpha}$, defined as in \eqref{slov5}, implies that 
	\begin{equation*}
	\frac{w\Phi''}{\Phi'}\to0\qquad\text{as }|w|\to\infty. 
	\end{equation*}	
	Therefore, we have
	\begin{equation*}
	\operatorname{arg}\phi'(e^{it})\bigg|_{t=0}^{t=2\pi}=\lim_{r\to\infty}\operatorname{arg}\Phi'(re^{it})\bigg|_{t=0}^{t=2\pi}=\lim_{r\to\infty}\frac{1}{2\pi i}\int_{|w|=r}\frac{\Phi''(w)}{\Phi'(w)}dw=0,
	\end{equation*}
	thus concluding the proof.
\end{proof}

\subsection{Uniform bounds}\label{app:uniform_bounds}
In order to prove control of $\|\phi\|_{C^{1+\alpha}}$, we will use the following results. Since they are very similar to the ones in \cite{hmw:global}, we only state them and provide outlines of the proofs for the sake of completeness.

\begin{lemma}\label{lem:Koebe_1/4}
	Any $(\phi,\ep)$ solution to \eqref{A_formulation} satisfies $\|\phi\|_{L^\infty}\leq4$.
\end{lemma}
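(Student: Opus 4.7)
The plan is to combine a Koebe $1/4$ bound on the diameter of $\overline D=K:=\C\setminus\Phi(\{|w|>1\})$ with the observation that the Laurent expansion of $\Phi$ at infinity has no constant term, which forces $0$ to lie in the convex hull of $\partial D$. Since $(f,\varepsilon)\in\mathcal U_1$, $\phi$ extends to a univalent holomorphic map $\Phi$ on $\{|w|>1\}$; and from $f(w)=\sum_{n\geq 1}a_n w^{-n}$ one reads off
\[
\Phi(w)=w+\varepsilon a_1 w^{-1}+\varepsilon a_2 w^{-2}+\cdots
\]
at infinity, with vanishing constant Laurent coefficient.

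First I would bound $\diam\overline D\leq 4$ by the standard exterior-Koebe trick. For each $\xi\in K$ set
\[
F_\xi(z):=\frac{1}{\Phi(1/z)-\xi},\qquad |z|<1.
\]
Because $\Phi$ is univalent on $\{|w|>1\}$ and avoids the value $\xi\in K$ there, $F_\xi$ is univalent on the punctured disk, and a short calculation using $\Phi(1/z)=1/z+O(z)$ yields $F_\xi(z)=z+\xi z^2+O(z^3)$, so $F_\xi$ extends holomorphically to $z=0$ with $F_\xi(0)=0$, $F_\xi'(0)=1$, and remains univalent on the whole unit disk. The classical Koebe $1/4$ theorem then gives $F_\xi(\{|z|<1\})\supset B(0,1/4)$, which rearranges to $K\subset\overline{B(\xi,4)}$. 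Letting $\xi_1,\xi_2\in K$ vary yields $|\xi_1-\xi_2|\leq 4$ and hence $\diam K\leq 4$.

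Second, the vanishing of the constant Laurent coefficient is exactly the statement that the zeroth Fourier coefficient of $\phi$ on $\T$ vanishes,
\[
\frac{1}{2\pi}\int_0^{2\pi}\phi(e^{i\theta})\,d\theta=0,
\]
so $0$ is the centroid of $\phi|_\T$ and therefore lies in $\operatorname{conv}(\partial D)$. By Carath\'eodory one writes $0=\sum_{i=1}^{3}\lambda_i p_i$ with convex weights $\lambda_i\geq 0$ and $p_i\in\partial D\subset K$, whence for any $w\in\T$
\[
|\phi(w)|=\Bigl|\sum_i\lambda_i\bigl(\phi(w)-p_i\bigr)\Bigr|\leq\max_i|\phi(w)-p_i|\leq\diam K\leq 4.
\]
The only mild subtlety is this final coupling step: the diameter bound by itself does not control $|\phi|$, since it would fit $\overline D$ only in a ball whose center is not a priori the origin; it is the vanishing constant Laurent coefficient, interpreted as a centroid condition, that anchors the estimate to the origin.
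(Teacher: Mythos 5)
Your proof is correct, and its engine is the same as the paper's: the Koebe $1/4$ theorem applied to the inversion $1/(\Phi(1/z)-\xi)$. The difference is in how the estimate gets centered at the origin. The paper applies Koebe exactly once, to $g(\zeta)=1/\Phi(1/\zeta)$, i.e.\ to your $F_\xi$ with $\xi=0$; this immediately yields $K=\overline D\subset \overline{B(0,4)}$, but it tacitly requires $0$ to be an omitted value of $\Phi$ on $\{|w|>1\}$ (otherwise $g$ has a pole), a fact which is automatic when $D$ is close to the disk but is not literally part of the definition of $\mathcal U_1$. You instead apply Koebe at every omitted point $\xi\in K$, obtaining only $\diam K\le 4$, and then anchor the bound at the origin through the vanishing of the constant Laurent coefficient, i.e.\ $\frac{1}{2\pi}\int_0^{2\pi}\phi(e^{i\theta})\,d\theta=0$, so that $0\in\operatorname{conv}(\partial D)$ and $|\phi(w)|\le\diam K$. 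Your route is slightly longer but closes the ``$0\in\overline D$'' gap and thus proves the lemma for all of $\mathcal U_1$ as stated; the paper's route is shorter and gives the constant $4$ in one step whenever $0\in\overline D$. Both arguments are sound, and your explicit remark that a diameter bound alone does not control $\|\phi\|_{L^\infty}$ correctly identifies the one point where the two proofs genuinely diverge.
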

\begin{proof}
	The proof of this lemma follows directly from applying the Koebe $1/4$ lemma to the function $g(\zeta):=1/\Phi(1/\zeta)$, where $\Phi$ is the extension to a conformal mapping $\mathbb{C}\setminus\overline{\mathbb{D}}\to\mathbb{C}$ of $\phi$.
\end{proof}
The next lemma tells us that a bound
\begin{equation*}
\bigg\|\operatorname{arg}\frac{w\phi'(w)}{\phi(w)}\bigg\|_{L^\infty}<\frac{\pi}{2}
\end{equation*}
implies a bound on $\|\phi'\|_{L^p}$, for some $p>1$. 
\begin{lemma}\cite{gaier:conformal}\label{lem:bounds_on_phi'}
	Suppose $(\phi,\ep)$ solves \eqref{A_formulation}. Then we have
	\begin{equation*}
	\int_{0}^{2\pi}|\phi'(e^{it})|^p\,dt\leq\frac{2\pi\cdot4^p}{\cos(p\|\gamma\|_{L^\infty})}\qquad\text{for }0\leq p<\frac{\pi/2}{\|\gamma\|_{L^\infty}},
	\end{equation*}
	where here
	\begin{equation*}
	\gamma(w)=\operatorname{arg}\frac{w\phi'(w)}{\phi(w)}.
	\end{equation*}
\end{lemma}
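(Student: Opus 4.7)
The plan is to reduce the inequality to the mean value property for a cleverly chosen holomorphic function in the exterior of the unit disk. First, I would extend $\phi$ to the conformal map $\Phi:\mathbb{C}\setminus\overline{\mathbb{D}}\to\mathbb{C}\setminus\overline{D}$ and observe that, because $D$ contains the origin (it is the centered patch) and $\Phi$ is univalent with nonvanishing derivative, the quotient $w\Phi'(w)/\Phi(w)$ is a well-defined, holomorphic, nonvanishing function on $\mathbb{C}\setminus\overline{\mathbb{D}}$. Since $\phi(w)=w+\ep f(w)$ with $f(w)=\sum_{n\geq1}a_n w^{-n}$, we have $w\Phi'(w)/\Phi(w)\to1$ as $w\to\infty$, so one can fix the branch $\log(w\Phi'/\Phi)$ that vanishes at infinity. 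Its imaginary part is harmonic in the exterior domain, takes boundary values $\gamma$, and hence by the maximum principle satisfies $|\arg(w\Phi'/\Phi)|\leq\|\gamma\|_{L^\infty}$ throughout $\mathbb{C}\setminus\overline{\mathbb{D}}$; in particular, since $(f,\ep)\in\mathcal{U}_3$ guarantees $\|\gamma\|_{L^\infty}<\pi/2$, the bound $p\|\gamma\|_{L^\infty}<\pi/2$ keeps us safely inside the right half-plane.

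Second, I would introduce
\begin{equation*}
F(w):=\Bigl(\frac{w\Phi'(w)}{\Phi(w)}\Bigr)^p = \exp\bigl(p\log(w\Phi'(w)/\Phi(w))\bigr),
\end{equation*}
which is holomorphic and nonvanishing on $\mathbb{C}\setminus\overline{\mathbb{D}}$, continuous up to $\mathbb{T}$, and normalized by $F(\infty)=1$. On $\mathbb{T}$, $|w|=1$ gives
\begin{equation*}
|F(w)|=|\phi'(w)|^p/|\phi(w)|^p,\qquad \arg F(w)=p\,\gamma(w),
\end{equation*}
so that $\mathrm{Re}\,F=|F|\cos(p\gamma)\geq|F|\cos(p\|\gamma\|_{L^\infty})$. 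Rearranging and invoking Lemma~\ref{lem:Koebe_1/4} to control $|\phi|\leq4$, I obtain the pointwise estimate
\begin{equation*}
|\phi'(w)|^p\leq \frac{|\phi(w)|^p}{\cos(p\|\gamma\|_{L^\infty})}\,\mathrm{Re}\,F(w)\leq \frac{4^p}{\cos(p\|\gamma\|_{L^\infty})}\,\mathrm{Re}\,F(w)\qquad\text{on }\mathbb{T}.
\end{equation*}

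Finally, to integrate this, I would transfer the mean value property from the disk to the exterior: setting $g(\zeta):=F(1/\overline{\zeta})$, or equivalently applying the mean value theorem to the auxiliary function $\zeta\mapsto F(1/\zeta)$ (holomorphic in $\mathbb{D}$, continuous on $\overline{\mathbb{D}}$, equal to $1$ at the origin), one gets
\begin{equation*}
\frac{1}{2\pi}\int_0^{2\pi}F(e^{it})\,dt=F(\infty)=1,
\end{equation*}
and in particular $\int_0^{2\pi}\mathrm{Re}\,F(e^{it})\,dt=2\pi$. Combining with the pointwise bound yields the claimed inequality.

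The main conceptual obstacle, and the reason the hypothesis $(f,\ep)\in\mathcal{U}_3$ is used rather than just $\mathcal{U}_1$, is ensuring that a single branch of $\arg(w\Phi'/\Phi)$ exists globally in $\mathbb{C}\setminus\overline{\mathbb{D}}$ with values in $(-\pi/2,\pi/2)$, so that the power $F=(w\Phi'/\Phi)^p$ is unambiguously defined and has positive real part. Nonvanishing of $\Phi$ in the exterior (which relies on $0\in D$) and nonvanishing of $\Phi'$ (from $\mathcal{U}_1\supset\mathcal{U}_3$) together with the strict half-plane condition from $\mathcal{U}_3$ are exactly what makes this branch construction work; everything else is a clean application of the maximum principle and the mean value property.
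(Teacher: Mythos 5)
Your proposal is correct and follows essentially the same route as the paper: both apply the mean value property at infinity to $(w\Phi'(w)/\Phi(w))^p$ (the paper phrases this as a residue computation giving $1=\tfrac{1}{2\pi}\int_0^{2\pi}e^{pu}\cos(p\gamma)\,dt$), use $\cos(p\gamma)\geq\cos(p\|\gamma\|_{L^\infty})>0$, and then invoke Lemma~\ref{lem:Koebe_1/4} to replace $|\phi|^p$ by $4^p$. The only slip is notational: $g(\zeta):=F(1/\overline{\zeta})$ is anti-holomorphic, but your parenthetical correction to $\zeta\mapsto F(1/\zeta)$ fixes this.
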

\begin{proof}
	As for the proof of the previous lemma, we use $\Phi$, the extention to a conformal mapping of $\phi$. Specifically, we define the holomorphic function
	\begin{equation*}
	F(w):=\log\frac{w\Phi'(w)}{\Phi(w)}=u(w)+i\gamma(w).
	\end{equation*}
	Using the calculus of residues we get
	\begin{equation*}
	1=\frac{1}{2\pi}\int_{0}^{2\pi}e^{pu(e^{it})}\cos[p\gamma(e^{it})]dt,
	\end{equation*}
	for any $p$. By adding the restriction $p\|\gamma\|_{L^\infty}<\pi/2$, we obtain
	\begin{equation*}
	\int_{0}^{2\pi}\frac{|\Phi'(e^{it})|^p}{|\Phi(e^{it})|^p}dt\leq\frac{2\pi}{\cos(p\|\gamma\|_{L^\infty})}.
	\end{equation*}
	We can now apply Lemma~\ref{lem:Koebe_1/4} to get the desired result.
\end{proof}

\section*{Statements and declarations}

The authors certify that they have no affiliations with or involvement in any organization or entity with any financial interest, or non-financial interest in the subject matter or materials discussed in this manuscript. Moreover, data sharing not applicable to this article as no datasets were generated or analyzed during the current study.

\bibliographystyle{amsalpha}
\bibliography{references}

\renewcommand{\MR}[1]{}\def\cprime{$'$}
\providecommand{\bysame}{\leavevmode\hbox to3em{\hrulefill}\thinspace}
\providecommand{\MR}{\relax\ifhmode\unskip\space\fi MR }
\providecommand{\MRhref}[2]{%
  \href{http://www.ams.org/mathscinet-getitem?mr=#1}{#2}
}
\providecommand{\href}[2]{#2}
\begin{thebibliography}{dlHHMV16}

\bibitem[BHM22]{Berti-Hassainia-Masmoudi:2022}
Massimiliano Berti, Zineb Hassainia, and Nader Masmoudi, \emph{Time
  quasi-periodic vortex patches}, arXiv preprint arXiv:2202.06215 (2022).

\bibitem[BT03]{bt:analytic}
Boris Buffoni and John Toland, \emph{Analytic theory of global bifurcation},
  Princeton Series in Applied Mathematics, Princeton University Press,
  Princeton, NJ, 2003, An introduction. \MR{1956130 (2004b:47117)}

\bibitem[Bur82]{burbea:motions}
Jacob Burbea, \emph{Motions of vortex patches}, Lett. Math. Phys. \textbf{6}
  (1982), no.~1, 1--16. \MR{646163}

\bibitem[CCGS16a]{CCGS-2016}
Angel Castro, Diego C\'{o}rdoba, and Javier G\'{o}mez-Serrano, \emph{Existence
  and regularity of rotating global solutions for the generalized surface
  quasi-geostrophic equations}, Duke Math. J. \textbf{165} (2016), no.~5,
  935--984. \MR{3482335}

\bibitem[CCGS16b]{CCGS-2016-2}
\bysame, \emph{Uniformly rotating analytic global patch solutions for active
  scalars}, Ann. PDE \textbf{2} (2016), no.~1, Art. 1, 34. \MR{3462104}

\bibitem[CCGS19]{CCGS-2019}
\bysame, \emph{Uniformly rotating smooth solutions for the incompressible 2{D}
  {E}uler equations}, Arch. Ration. Mech. Anal. \textbf{231} (2019), no.~2,
  719--785. \MR{3900813}

\bibitem[CCGS20]{CCGS-2020}
\bysame, \emph{Global smooth solutions for the inviscid {SQG} equation}, Mem.
  Amer. Math. Soc. \textbf{266} (2020), no.~1292, v+89. \MR{4126257}

\bibitem[CM04]{cm:growing}
Darren Crowdy and Jonathan Marshall, \emph{Growing vortex patches}, Phys.
  Fluids \textbf{16} (2004), no.~8, 3122--3130. \MR{2075559}

\bibitem[CM05]{cm:analytical}
\bysame, \emph{Analytical solutions for rotating vortex arrays involving
  multiple vortex patches}, J. Fluid Mech. \textbf{523} (2005), 307--337.
  \MR{2260961}

\bibitem[CR71]{rabinowitz:simple}
Michael~G. Crandall and Paul~H. Rabinowitz, \emph{Bifurcation from simple
  eigenvalues}, J. Functional Analysis \textbf{8} (1971), 321--340. \MR{0288640
  (44 \#5836)}

\bibitem[CWW18]{strat}
Robin~Ming Chen, Samuel Walsh, and Miles~H. Wheeler, \emph{Existence and
  qualitative theory for stratified solitary water waves}, Ann. Inst. H.
  Poincar\'e Anal. Non Lin\'eaire \textbf{35} (2018), no.~2, 517--576.
  \MR{3765551}

\bibitem[Dan73]{dancer:global}
E.~N. Dancer, \emph{Global structure of the solutions of non-linear real
  analytic eigenvalue problems}, Proc. London Math. Soc. (3) \textbf{27}
  (1973), 747--765. \MR{0375019}

\bibitem[DHR19]{Dristchel-Hmidi-Renault}
David~Gerard Dritschel, Taoufik Hmidi, and Coralie Renault, \emph{Imperfect
  bifurcation for the quasi-geostrophic shallow-water equations}, Arch. Ration.
  Mech. Anal. \textbf{231} (2019), no.~3, 1853--1915. \MR{3902477}

\bibitem[dlHHH16]{Hoz-Hassainia-Hmidi:doubly-gSQG}
Francisco de~la Hoz, Zineb Hassainia, and Taoufik Hmidi, \emph{Doubly connected
  {V}-states for the generalized surface quasi-geostrophic equations}, Arch.
  Ration. Mech. Anal. \textbf{220} (2016), no.~3, 1209--1281. \MR{3466846}

\bibitem[dlHHHM16]{Hoz-Hassainia-Hmidi-Mateu:disc}
Francisco de~la Hoz, Zineb Hassainia, Taoufik Hmidi, and Joan Mateu, \emph{An
  analytical and numerical study of steady patches in the disc}, Anal. PDE
  \textbf{9} (2016), no.~7, 1609--1670. \MR{3570233}

\bibitem[dlHHMV16]{Hoz-Hmidi-Mateu-Verdera:doubly-euler}
Francisco de~la Hoz, Taoufik Hmidi, Joan Mateu, and Joan Verdera, \emph{Doubly
  connected {$V$}-states for the planar {E}uler equations}, SIAM J. Math. Anal.
  \textbf{48} (2016), no.~3, 1892--1928. \MR{3507551}

\bibitem[Dri95]{dritschel}
David~G. Dritschel, \emph{A general theory for two-dimensional vortex
  interactions}, J. Fluid Mech. \textbf{293} (1995), 269--303. \MR{1342460}

\bibitem[DZ78]{dz:vstates}
Gary~S. Deem and Norman~J. Zabusky, \emph{Vortex waves: Stationary ``${V}$
  states,'' interactions, recurrence, and breaking}, Phys. Rev. Lett.
  \textbf{40} (1978), 859--862.

\bibitem[Fra00]{Fraenkel}
L.~E. Fraenkel, \emph{An introduction to maximum principles and symmetry in
  elliptic problems}, Cambridge Tracts in Mathematics, vol. 128, Cambridge
  University Press, Cambridge, 2000. \MR{1751289}

\bibitem[Gai62]{gaier:conformal}
D.~Gaier, \emph{On conformal mapping of nearly circular regions}, Pacific J.
  Math. \textbf{12} (1962), 149--162.

\bibitem[Gar20]{garcia:karman}
Claudia Garc\'{\i}a, \emph{K\'{a}rm\'{a}n vortex street in incompressible fluid
  models}, Nonlinearity \textbf{33} (2020), no.~4, 1625--1676. \MR{4072388}

\bibitem[Gar21]{garcia:choreography}
\bysame, \emph{Vortex patches choreography for active scalar equations}, J.
  Nonlinear Sci. \textbf{31} (2021), no.~5, Paper No. 75, 31. \MR{4284365}

\bibitem[GHM22]{GHM:2022}
Claudia Garc\'{\i}a, Taoufik Hmidi, and Joan Mateu, \emph{Time {P}eriodic
  {S}olutions for 3{D} {Q}uasi-{G}eostrophic {M}odel}, Comm. Math. Phys.
  \textbf{390} (2022), no.~2, 617--756. \MR{4384717}

\bibitem[GHS20]{GHS:2020}
Claudia Garc\'{\i}a, Taoufik Hmidi, and Juan Soler, \emph{Non uniform rotating
  vortices and periodic orbits for the two-dimensional {E}uler equations},
  Arch. Ration. Mech. Anal. \textbf{238} (2020), no.~2, 929--1085. \MR{4134155}

\bibitem[GSPSY20]{GSPSY-sheets}
Javier G\'omez~Serrano, Jaemin Park, Jia Shi, and Yao Yao, \emph{Remarks on
  stationary and uniformly-rotating vortex sheets: Flexibility results}, arXiv
  preprint arXiv:2012.08709 (2020).

\bibitem[GSPSY21]{GSPSY-rigidity}
Javier G\'{o}mez-Serrano, Jaemin Park, Jia Shi, and Yao Yao, \emph{Symmetry in
  stationary and uniformly rotating solutions of active scalar equations}, Duke
  Math. J. \textbf{170} (2021), no.~13, 2957--3038. \MR{4312192}

\bibitem[GT01]{GT}
David Gilbarg and Neil~S. Trudinger, \emph{Elliptic partial differential
  equations of second order}, Classics in Mathematics, Springer-Verlag, Berlin,
  2001, Reprint of the 1998 edition. \MR{1814364 (2001k:35004)}

\bibitem[HH15]{Hassainia-Hmidi:vstates-gSQG}
Zineb Hassainia and Taoufik Hmidi, \emph{On the {V}-states for the generalized
  quasi-geostrophic equations}, Comm. Math. Phys. \textbf{337} (2015), no.~1,
  321--377. \MR{3324164}

\bibitem[HH21]{HH:asymmetric-pairs}
\bysame, \emph{Steady asymmetric vortex pairs for {E}uler equations}, Discrete
  Contin. Dyn. Syst. \textbf{41} (2021), no.~4, 1939--1969. \MR{4211209}

\bibitem[HHM21]{Hassainia-Hmidi-Masmoudi:2021}
Zineb Hassainia, Taoufik Hmidi, and Nader Masmoudi, \emph{Kam theory for active
  scalar equations}, arXiv preprint arXiv:2110.08615 (2021).

\bibitem[HM16a]{Hmidi-Mateu:kirchhoff}
Taoufik Hmidi and Joan Mateu, \emph{Bifurcation of rotating patches from
  {K}irchhoff vortices}, Discrete Contin. Dyn. Syst. \textbf{36} (2016),
  no.~10, 5401--5422. \MR{3543554}

\bibitem[HM16b]{Hmidi-Mateu:degenerate}
\bysame, \emph{Degenerate bifurcation of the rotating patches}, Adv. Math.
  \textbf{302} (2016), 799--850. \MR{3545942}

\bibitem[HM17]{hm:pairs}
\bysame, \emph{Existence of corotating and counter-rotating vortex pairs for
  active scalar equations}, Comm. Math. Phys. \textbf{350} (2017), no.~2,
  699--747. \MR{3607460}

\bibitem[Hmi15]{hmidi:trivial}
Taoufik Hmidi, \emph{On the trivial solutions for the rotating patch model}, J.
  Evol. Equ. \textbf{15} (2015), no.~4, 801--816. \MR{3427065}

\bibitem[HMV13]{hmv:reg}
Taoufik Hmidi, Joan Mateu, and Joan Verdera, \emph{Boundary regularity of
  rotating vortex patches}, Arch. Ration. Mech. Anal. \textbf{209} (2013),
  no.~1, 171--208. \MR{3054601}

\bibitem[HMV15]{Hmidi-Mateu-Verdera:doubly}
\bysame, \emph{On rotating doubly connected vortices}, J. Differential
  Equations \textbf{258} (2015), no.~4, 1395--1429. \MR{3294351}

\bibitem[HMW20]{hmw:global}
Zineb Hassainia, Nader Masmoudi, and Miles~H. Wheeler, \emph{Global bifurcation
  of rotating vortex patches}, Comm. Pure Appl. Math. \textbf{73} (2020),
  no.~9, 1933--1980. \MR{4156612}

\bibitem[HR21]{Hmidi-Roulley:2022}
Taoufik Hmidi and Emeric Roulley, \emph{Time quasi-periodic vortex patches for
  quasi-geostrophic shallow-water equations}, arXiv preprint arXiv:2110.13751
  (2021).

\bibitem[HR22]{Hassainia-Roulley:2022}
Zineb Hassainia and Emeric Roulley, \emph{Boundary effects on the emergence of
  quasi-periodic solutions for euler equations}, arXiv preprint
  arXiv:2202.10053 (2022).

\bibitem[HW21]{Hassainia-Wheeler-points}
Zineb Hassainia and Miles~H. Wheeler, \emph{Multipole vortex patch equilibria
  for active scalar equations}, arXiv preprint arXiv:2103.06839 (2021).

\bibitem[Kie04]{kielhofer}
Hansj{\"o}rg Kielh{\"o}fer, \emph{Bifurcation theory}, Applied Mathematical
  Sciences, vol. 156, Springer-Verlag, New York, 2004, An introduction with
  applications to PDEs. \MR{2004250 (2004i:47133)}

\bibitem[Kir76]{kirchhoff:book}
G.~R. Kirchhoff, \emph{Vorlesungen uber mathematische physik. mechanik.},
  Teubner, Leipzig, 1876.

\bibitem[KNS78]{kns:freereg}
D.~Kinderlehrer, L.~Nirenberg, and J.~Spruck, \emph{Regularity in elliptic free
  boundary problems}, J. Analyse Math. \textbf{34} (1978), 86--119 (1979).
  \MR{531272}

\bibitem[Lam93]{lamb}
Horace Lamb, \emph{Hydrodynamics}, sixth ed., Cambridge Mathematical Library,
  Cambridge University Press, Cambridge, 1993, With a foreword by R. A.
  Caflisch [Russel E. Caflisch]. \MR{1317348 (96f:76001)}

\bibitem[LdCP99]{ldcl:cauchy}
Massimo Lanza~de Cristoforis and Luca Preciso, \emph{On the analyticity of the
  {C}auchy integral in {S}chauder spaces}, J. Integral Equations Appl.
  \textbf{11} (1999), no.~3, 363--391. \MR{1719083 (2000h:47073)}

\bibitem[MP83]{mp:vortex}
C.~Marchioro and M.~Pulvirenti, \emph{Euler evolution for singular initial data
  and vortex theory}, Comm. Math. Phys. \textbf{91} (1983), no.~4, 563--572.
  \MR{727203}

\bibitem[New01]{newton:book}
P.~K. Newton, \emph{The n–vortex problem. analytical techniques.}, Springer,
  New York, 2001.

\bibitem[Ove86]{overman:limiting}
Edward~A. Overman, II, \emph{Steady-state solutions of the {E}uler equations in
  two dimensions. {II}. {L}ocal analysis of limiting {$V$}-states}, SIAM J.
  Appl. Math. \textbf{46} (1986), no.~5, 765--800. \MR{858995 (87m:76020)}

\bibitem[Pie80]{pierrehumbert}
R.~T. Pierrehumbert, \emph{A family of steady, translating vortex pairs with
  distributed vorticity}, Journal of Fluid Mechanics \textbf{99} (1980), no.~1,
  129--144.

\bibitem[Pom92]{pommerenke:book}
Ch. Pommerenke, \emph{Boundary behaviour of conformal maps}, Grundlehren der
  Mathematischen Wissenschaften [Fundamental Principles of Mathematical
  Sciences], vol. 299, Springer-Verlag, Berlin, 1992. \MR{1217706}

\bibitem[Rou22]{Roulley:2022}
Emeric Roulley, \emph{Vortex rigid motion in quasi-geostrophic shallow-water
  equations}, arXiv preprint arXiv:2202.00404 (2022).

\bibitem[SS80]{ss:pairs}
P.~G. Saffman and R.~Szeto, \emph{Equilibrium shapes of a pair of equal uniform
  vortices}, Phys. Fluids \textbf{23} (1980), no.~12, 2339--2342. \MR{594039}

\bibitem[Tho80]{thomson:columnar}
William Thomson, \emph{Vibrations of a columnar vortex}, Proceedings of the
  Royal Society of Edinburgh \textbf{10} (1880), 443–456.

\bibitem[Tur85]{turkington:nfold}
Bruce Turkington, \emph{Corotating steady vortex flows with n-fold symmetry.},
  Nonlinear Anal., Theory Methods Appl. \textbf{9} (1985), 351--369 (English).

\bibitem[Vol11]{volpert:book}
V.~Volpert, \emph{Elliptic partial differential equations. {V}olume 1:
  {F}redholm theory of elliptic problems in unbounded domains}, Monographs in
  Mathematics, vol. 101, Birkh\"auser/Springer Basel AG, Basel, 2011.
  \MR{2778694}

\bibitem[WOZ84]{woz:numerical}
H.~M. Wu, E.~A. {Overman II}, and N.~J. Zabusky, \emph{Steady-state solutions
  of the {E}uler equations in two dimensions: rotating and translating
  {$V$}-states with limiting cases. {I}. {N}umerical algorithms and results},
  J. Comput. Phys. \textbf{53} (1984), no.~1, 42--71. \MR{734586}

\end{thebibliography}

\end{document}